\DeclareTextFontCommand{\textbfit}{%
  \fontseries\bfdefault 
  \itshape
}
\colorlet{blue}{blue!80!black}
\colorlet{red}{red!80!black}
\colorlet{green}{green!50!black}
\newtheorem{assumption}{Assumption}[section]
\newtheorem{theorem}{Theorem}[section]
\newtheorem{proposition}[theorem]{Proposition}
\newtheorem{lemma}[theorem]{Lemma}
\theoremstyle{definition}
\newtheorem{definition}[theorem]{Definition}
\newtheorem{example}[theorem]{Example}
\theoremstyle{remark}
\newtheorem{remark}[theorem]{Remark}
\numberwithin{equation}{section}
\DeclareMathOperator*{\argmin}{arg\,min}
\newcommand{\dist}{{\sf dist}}
\newcommand{\ds}{\displaystyle}
\newcommand{\diag}{{\sf Diag\,}}
\renewcommand{\int}{{\sf int\,}}
\newcommand{\bdry}{{\sf bdry\,}}
\def\[{\begin{equation}}
\def\]{\end{equation}}
\def\cA{{\mathcal A}}
\def\cC{{\mathcal C}}
\def\cT{{\mathcal T}}
\def\cH{{\mathcal H}}
\def\cU{{\mathcal U}}
\def\cQ{{\mathcal Q}}
\def\cM{{\mathcal M}}
\def\cW{{\mathcal W}}
\def\bW{{\mathbf W}}
\def\cS{{\mathcal S}}
\def\cV{{\mathcal V}}
\def\bV{{\mathbf V}}
\def\cI{{\mathcal I}}
\def\cJ{{\mathcal J}}
\def\cX{{\mathcal X}}
\def\cZ{{\mathcal Y}}
\def\cZ{{\mathcal Z}}
\def\cL{{\mathcal L}}
\def\bfx{{\mathbf x}}
\def\aff{{\sf aff\,}}
\def\app{{\sf app\,}}
\def\lin{{\sf lin\,}}
\renewcommand\Re{{\mathds R}}
\title{\Large \bf A Unified Convergence Analysis of a Second-Order Method of Multipliers for Nonlinear Conic Programming}
\author{Liang Chen\thanks{School of Mathematics, Hunan University, Changsha 410082, P.R. China (\url{chl@hnu.edu.cn}).
This author was supported in part by the National Natural Science Foundation of China (11801158), the Hunan Provincial Natural Science Foundation of China (2019JJ50040), and the Fundamental Research Funds for the Central Universities in China.}\quad
{Junyuan Zhu}\thanks{School of Mathematics, Hunan University, Changsha 410082, P.R. China (\url{jyz@hnu.edu.cn}).} \quad
{Xinyuan Zhao}\thanks{School of Mathematics, Beijing University of Technology, Beijing 100124, P. R. China (\url{xyzhao@bjut.edu.cn}). This author was supported in part by the National Natural Science Foundation of China (11871002) and the General Program of Science and Technology of Beijing Municipal Education Commission.}}
\date{\today}
\begin{document}
\maketitle

\begin{abstract}
In this paper, we accomplish a unified convergence analysis of a second-order method of multipliers (i.e., a second-order augmented Lagrangian method) for solving the conventional nonlinear conic optimization problems.
Specifically, the algorithm that we investigated incorporates a specially designed nonsmooth (generalized) Newton step to furnish a second-order update rule for the multipliers.
We first show in a unified fashion that under a few abstract assumptions, the proposed method is locally convergent and possesses a (nonasymptotic) superlinear convergence rate, even though the penalty parameter is fixed and/or the strict complementarity fails.
Subsequently, we demonstrate that, for the three typical scenarios, i.e., the classic nonlinear programming, the nonlinear second-order cone programming, and the nonlinear semidefinite programming, these abstract assumptions are nothing but exactly the implications of the iconic sufficient conditions that were assumed for establishing the Q-linear convergence rates of the method of multipliers without assuming the strict complementarity.
\end{abstract}

\medskip
{\small
\begin{center}
\parbox{0.95\hsize}{{\bf Keywords.}\;
second-order method of multipliers, augmented Lagrangian method, convergence rate,
generalized Newton method, second-order cone programming, semidefinite programming
}
\end{center}
\begin{center}
\parbox{0.95\hsize}{{\bf AMS Subject Classification [2000].}\;
Primary 65K05, 49J52; Secondary 90C22, 26E25}
\end{center}}

\section{Introduction}
Let $\cX$ and $\cZ$ be two finite-dimensional real Hilbert spaces each endowed with an inner product $\langle \cdot,\cdot\rangle$ and its induced norm $\|\cdot\|$.
Consider the following general nonlinear conic optimization problem
\begin{equation}
\label{op}
\min_{x\in\cX}\,
f(x)
\quad\mbox{s.t.}\quad
\ h(x)=0,\
g(x)\in K,
\end{equation}
where
$f: \cX\to\Re$, $h:\cX\to \Re^{m}$ and $g:\cX\to\cZ$ are three twice continuously differentiable functions, and $K$ is a closed convex self-dual cone in $\cZ$, i.e., it holds that
$$
K\equiv K^*:=\left\{\mu\, |\,\langle \mu,\mu'\rangle\ge 0,\ \forall \mu'\in K\right\}.
$$

Let $c>0$ be the given penalty parameter.
The augmented Lagrangian function of problem (\ref{op}) is defined by
\[
\label{alf}
\begin{array}{ll}
\ds
\cL_c(x,\lambda,\mu):
=f(x)-\langle\lambda, h(x)\rangle+\frac{c}{2}\|h(x)\|^2+\frac{1}{2c}\big(\|\Pi_{ K}(\mu-cg(x))\|^2-\|\mu\|^2\big),\qquad
\\[3mm]
\ds
\hfill\forall (x,\lambda,\mu)\in \cX\times\Re^{m}\times \cZ,
\end{array}
\]
where $\Pi_{K}(\cdot)$ denotes the metric projection operator onto $K$.

The \emph{method of multipliers} (also known as the augmented Lagrangian method)
for solving problem (\ref{op}) can be stated, in brief, as follows.
Let $c_0>0$ be a given constant and $(\lambda^0,\mu^0)\in \Re^{m}\times K$ be the initial estimate of the multipliers. For $k=0,1,\ldots$, determine $x^{k+1}\in\cX$ that minimizes the augmented Lagrangian function $\cL_{c_k}(x, \lambda^k, \mu^k)$ with respect to $x$, and then update the multipliers via
\[
\label{update}
\left\{
\begin{array}{l}
\lambda^{k+1}:=\lambda^k-c_k h(x^{k+1}),
\\[2mm]
\mu^{k+1}:=\Pi_{K}\big(\mu^k-c_k g(x^{k+1})\big),
\end{array}
\right.
\]
thereafter, update the penalty parameter such that $c_{k+1}\ge c_{k}$.

This method was initiated in Hestenes \cite{hestenes69} and Powell \cite{powell} for solving equality constrained nonlinear programming (NLP) problems,
and was generalized in Rockafellar \cite{roc73} for NLP problems with inequality constraints.
Nowadays, the augmented Lagrangian method has been serving as the backbone of many optimization solvers, such as \cite{lancelot,sdpnal,sdpnalp,lassonal}, which are readily available for both academic and commercial users.
The literature of the augmented Lagrangian method is considerably abundant, so that we are only able to review the references on analyzing and improving its convergence rate, which are the most relevant ones to this work.

The linear convergence rates of the method of multipliers for NLP problems has been extensively discussed in the literature, such as Powell \cite{powell}, Rockafellar \cite{roc73,roc732}, Tretyakov \cite{tre73}, Bertsekas \cite{ber76}, Conn \cite{conn}, Ito and Kunisch \cite{ito}, and Contesse-Becker \cite{cont93}, under various assumptions.
One may refer to the monograph of Bertsekas \cite{berbook} and the references therein for a thorough discussion on the augmented Lagrangian method for NLP problems.
For nonlinear second-order cone programming (NLSOCP) problems,
the local linear convergence rate of the augmented Lagrangian method has been analyzed in Liu and Zhang \cite{liuzhang1}, and the corresponding results were further improved in Liu and Zhang \cite{liuzhang2}.
For nonlinear semidefinite programming (NLSDP) problems, the local linear rate of convergence was analyzed by Sun et al. \cite{sun07}, and, importantly, the strict complementarity is not necessary for getting this result.
Here, we should mention that, in the vast majority of the above mentioned references, the convergence rates of the method of multipliers could be asymptotically superlinear, provided that the penalty parameter sequence $\{c_k\}$ tends to infinity as $k\to\infty$.

In the seminal work \cite{rockafellar} of Rockafellar, it was observed that, in the convex programming setting, the method of multipliers can be viewed as a proximal point algorithm applied to the dual.
On the other hand, without assuming the convexity, one can locally interpret the iteration procedure \eqref{update} for updating the multipliers as a first-order steepest ascent step applied to a certain concave (dual) problem.
Consequently, given the linear convergence rates of the method of multipliers mentioned above,
it is natural to pose the question that if one can design a second-order method of multipliers in the sense that a second-order update form of the multipliers, instead of \eqref{update}, is incorporated in the method of multipliers for calculating $\lambda^{k+1}$ and $\mu^{k+1}$.
Meanwhile, it is also interesting to know if the corresponding superlinear convergence rates are achievable, even when the sequence $\{c_k\}$ is bounded and/or the strict complementarity fails.

In fact, several attempts have been made to resolve this question for quite a long time.
For NLP problems without inequality constraints, Buys \cite{buys} first introduced a second-order procedure for updating the multipliers, which can be viewed as an application of the classic Newton method to a certain dual problem.
Such a procedure was later investigated and refined by Bertsekas \cite{ber76b,ber78,berbook}.
Later, Brusch \cite{brusch} and Fletcher \cite{fletcher} independently proposed updating rules of the multipliers via the quasi-Newton method, and this was also studied in Fontecilla et al. \cite{font}.
For NLP problems with inequality constraints, the augmented Lagrangian function is no longer twice continuously differentiable. Therefore, it is much more difficult to design a second-order method of multipliers for these problems.
The first progress towards this direction was made by Yuan \cite{yuan14}, in which the updating rule is based on the classic Newton method, but the corresponding convergence result still has not been formally provided.
More recently, Chen and Liao \cite{chenlalm} proposed a second-order augmented Lagrangian method for solving NLP problems with inequality constraints. This algorithm is based on a specially designed generalized Newton method and, the corresponding local convergence, as well as the superlinear convergence rate, was established. At the same time, extensive numerical study of the second-order method of multipliers has also been conducted for convex quadratic programming \cite{Bueno}, and the numerical results suggest that the second-order update formula of the multipliers, at least locally, can be much better than the usual first-order update rule \cite[Section 6]{Bueno}.

Motivated by the references mentioned above, in this paper, we would like to approach the problem of how to extend the second-order method of multipliers in \cite{chenlalm} from NLP problems with inequality constraints to the more general conic optimization cases, i.e., the more complicated NLSOCP and NLSDP problems.
We should emphasize that, as can be observed later, such an extension is highly nontrivial, inasmuch as the required variational analysis for the non-polyhedral conic constraints is much more intricate than that for the NLP case, in which $K$ is a polyhedral convex set.

In the same vein as \cite{chenlalm}, we first use a specially designed generalized Newton method to deal with the nonsmoothness induced by the conic constraints, so as to construct a unified second-order method of multipliers for general nonlinear conic programming problems.
We analyze its convergence (locally convergent with a superlinear convergence rate even the strict complementarity fails and the penalty parameter is fixed) under certain abstract assumptions.
After that, for the NLP, NLSOCP and NLSDP cases, we show separately that these abstract assumptions can be implied by certain second-order optimality conditions plus a constraint qualification, which have been frequently used for establishing the Q-linear convergence rate of the method of multipliers.
Furthermore, when the conic constraints vanish, the algorithm studied in this paper automatically turns to the classic second-order method of multipliers for NLP problems with only equality constraints, and the convergence results in this paper are consistent with all the previous related studies.

The remaining parts of this paper are organized as follows.
In Section \ref{sec2}, we prepare the necessary preliminaries from the nonsmooth analysis literature for further discussions.
In Section \ref{sec3}, we propose a second-order method of multipliers for solving problem \eqref{op}, and establish its convergence under certain abstract assumptions.
In Sections \ref{sec4}, \ref{sec5} and \ref{sect6}, we separately specify the abstract assumptions in Section \ref{sec3} with respect to the NLP, NLSCOP and NLSDP cases, which constitutes the main contribution of this paper.
Section \ref{secnum} presents a simple numerical example to illustrate the superlinear convergence of the method studied in this work.
We conclude this paper in Section \ref{sec7}.

\section{Notation and preliminaries}
\label{sec2}
\subsection{Notation}
Let $\cU$ be an arbitrarily given finite-dimensional real Hilbert space endowed with an inner product $\langle\cdot,\cdot\rangle$ that induces the norm $\|\cdot\|$.
We use ${\mathds L}(\cU,\cU)$ to denote the linear space of all the linear operators from $\cU$ to itself.
For any given $\varepsilon>0$, we define the open $\varepsilon$-neighborhood in $\cU$ by
$${\mathds B}_{\varepsilon}(u)
:=\{u'\in \cU\mid \|u-u'\|<\varepsilon\},\quad\forall u\in\cU.$$

{ {Let $U$ be a nonempty closed (not necessarily convex) set in $\cU$ and $\lin U$ denote the lineality space of $U$, i.e., the largest linear space contained in $U$.}} We use $\aff U$ to denote the affine hull of $U$, i.e., the smallest linear subspace of $\cU$ that contains $U$.
Moreover, we define the distance from a point in $\cU$ to the set $U$ by
$$
\dist(u,U):=\inf_{u'\in U}\|u-u'\|,\quad \forall u\in \cU.
$$
We use $\cT_{U}^i(u)$ and $\cT_{U}(u)$ to denote the tangent cone and contingent cone of the set $U$ at $u\in\cU$, respectively, which are defined by
$$
\cT_{U}^i(u):=\left\{u'\in\cU\mid \dist(u+\tau u', U)=o(\tau),\ \forall \tau\ge 0\right\},
$$
and
$$
\cT_{U}(u):=\left\{u'\in\cU\mid \exists\, \tau_k\downarrow 0,\mbox{ such that } \dist(u+\tau_k u', U)=o(\tau_k)\right\}.
$$
If, additionally, $U$ is a convex set, one has that $\cT_{U}^i(u)=\cT_{U}(u),\,\forall u\in\cU$ and such tangent and contingent cones are simply called as the tangent cone of $U$ at $u$.
In this case, we define the projection operator to the set $U$ by
$$
\Pi_{U}(u)=\argmin_{u'\in U}\|u-u'\|,\quad \forall u\in \cU.
$$

Let $\cV$ and $\cW$ be the other two given finite-dimensional real Hilbert spaces each endowed with an inner product $\langle\cdot,\cdot\rangle$ and its induced norm $\|\cdot\|$. For any set $C\subseteq\cU\times\cV$, we define
$$
\pi_{\cU}(C):=\{
u\in\cU \mid (u,v)\in C
\}.
$$
Let $F:\cU\to\cW$ be an arbitrarily given function.
The directional derivative of $F(\cdot)$ at $u\in\cU$ along with a nonzero direction $\Delta u\in\cU$, if exists, is denoted by $F'(u;\Delta u)$.
$F$ is said to be directionally differentiable at $u\in\cU$ if $F'(u;\Delta u)$ exists for any nonzero $\Delta u\in \cU$.
If $F$ is a linear mapping, we use $F^*$ to denote its adjoint.
If $F$ is Fr\'echet-differentiable at $u\in\cU$,
we use $\cJ F(u)$ to denote the Fr\'echet-derivative of $F$ at this point and, in this case, we denote
$$\nabla F(u):=(\cJ F(u))^*.$$

Let $O$ be an open set in $\cU$.
If $F: O\to\cW$ is locally Lipschitz continuous on $O$, $F$ is almost everywhere Fr\'echet-differentiable in $O$, thanks to \cite[Definition 9.1]{va} and the Rademacher's theorem \cite[Theorem 9.60]{va}.
In this case, we denote the set of all the points in $O$ where $F$ is Fr\'echet-differentiable by $D_{F}$.
Then, the Bouligand-subdifferential of $F$ at $u\in O$, denoted by $\partial_B F(u)$, is defined by
$$
\partial_B F(u):=\left\{\lim_{k\to\infty}\cJ F(u^k)\mid \ \{u^k\}\subset D_{F},\, u^k\to u\, \right\}.
$$
Moreover, the Clarke's generalized Jacobian of $F$ at $u\in O$ is defined by
$$
\partial F(u):=\mbox{\sf conv}\{\partial_B F(u)\},
$$
i.e., the convex hull of $\partial_B F(u)$.

Let $G:\cU\times\cV\to \cW$ be an arbitrarily given function.
The partial Fr\'echet-derivative of $G$ to $u$ at $(u,v)\in\cU\times\cV$, if exists, is denoted by $\cJ_{u}G(u,v)$, and we define
$\nabla_u G(u,v):=(\cJ_{u} G(u,v))^*$.
Furthermore, if $G$ is twice Fr\'echet-differentiable at $(u,v)\in\cU\times\cV$, we denote
$$
\begin{array}{l}
\cJ^2G(u,v):=\cJ(\cJ G)(u,v),\quad
\cJ^2_{u,u}G(u,v):=\cJ_{u}(\cJ_{u} G)(u,v),\\[2mm]
\nabla^2G(u,v):=\cJ(\nabla G)(u,v),\quad
\nabla_{uu}^2G(u,v):=\cJ_u(\nabla_u G)(u,v).
\end{array}
$$

Finally, we mention that the notations in Sections \ref{sec4}, \ref{sec5}  and \ref{sect6} are isolated from each other, as the contents of these sections are independent.

\subsection{Nonsmooth Newton methods}
\label{ssn}
Recall that $\cU$ is a finite-dimensional real Hilbert space. Given a nonempty open set $O\subset\cU$. Let $F:O\to\cU$ be a locally Lipschitz continuous function and consider the following nonlinear equation
\[
\label{leqt}
F(u)=0.
\]
\begin{definition}[Semismoothness]
Suppose that $F:O\to\cU$ is locally Lipschitz continuous.
$F$ is said to be semismooth at a point $u\in O$ if $F$ is directionally differentiable at $u$ and, for any sequence $\{u^k\}\subset O$ that converging to $u$ with $V^k\in\partial F(u^k)$, i.e., the Clarke's generalized Jacobian of $F$ at $u^k$, it holds that
\begin{equation}
\label{ss1}
F(u^k)- F(u)-V^k(u^k-u)=o(\|u^k-u\|).
\end{equation}
Moreover, $F$ is said to be strongly semismooth at $u$ if $F$ is semismooth at $u$ but with \eqref{ss1} being replaced by $F(u^k)-F(u)-V^k(u^k-u)=O(\|u^k-u\|^2)$.
\end{definition}
To solve the nonlinear equation \eqref{leqt}, the semismooth Newton method, as an instance of nonsmooth (generalized) Newton methods, requires a surrogate in ${\mathds L}(\cU,\cU)$ for the Jacobian used in the classic Newton's method.
For this purpose, one can prescribe a multi-valued mapping $T_F: {O}\rightrightarrows{\mathds L}(\cU,\cU)$ and make the following assumption.
\begin{assumption}
\label{asstf}
The mapping $T_F: {O}\rightrightarrows{\mathds L}(\cU,\cU)$ satisfies
\begin{itemize}
\item[\rm (a)] for any $u\in O$, $T_F(u)$ is a nonempty and compact subset of ${\mathds L}(\cU,\cU)$;
\item[\rm (b)] the mapping $T_F$ is outer semicontinuous\footnote{The definition of outer semicontinuous can be found in Rockafellar \cite[Definition 5.4]{va}.};
\item[\rm (c)] $\partial_B F(u)\Delta u\subset T_F(u)\Delta u\subset\partial_C F(u)\Delta u$,\ \ $\forall\, u\in O,\, \Delta u\in\cU$.
    \end{itemize}
Here, $\partial_C$ is similarly defined as in \cite[Proposition 2.6.2]{clarke}, i.e., by decomposing $\cU=\prod_{i=1}^{n}\cU_i$ with each $\cU_i$ being a finite-dimensional Hilbert space, one can write $F(u)=\prod_{i=1}^{n}F_i(u),\,\forall u\in O$, so that one can define
$$
\partial_C F(u):=\prod_{i=1}^n\partial F_i(u) \quad\mbox{with}\quad
\partial F_i(u):=\pi_{\cU_i}(\partial F(u)),
 \quad\forall u\in O.
$$

\end{assumption}

Based on the mapping $T_F$ defined above, the semismooth Newton method for solving \eqref{leqt} can be stated as follows:
Let $u^0\in O$ be the given initial point.
For $k=1,\ldots$,
\begin{itemize}
\item[{\bf 1.}]
choose $V^{k}\in T_F(u^k)$;
\item[{\bf 2.}]
compute the generalized Newton direction $\Delta u^k=\Delta u$ via solving the linear system $F(u^k)+V^{k} \Delta u=0$;
\item[{\bf 3.}]
set $u^{k+1}:=u^k+\Delta u^k$.
\end{itemize}

\begin{remark}
The notion of semismoothness was first introduced in Mifflin \cite{mifflin} for functionals, and was later extended in Qi and Sun \cite{qi93} to vector-valued functions. Here we adopt the definition for semismoothness from Sun and Sun \cite[Definition 3.6]{sunsun}, which was derived based on Pang and Qi \cite[Proposition 1]{pangqi}.
The semismooth Newton method was first proposed in Kummer \cite{kummer1,kummer2}, and was further extended by Qi and Sun \cite{qi93}, Qi \cite{qi932}, and Sun and Han \cite{sun-newton}. The following theorem gives the convergence properties of the above method.
\end{remark}

\begin{theorem}[{\cite[Theorem 3.2]{qi93}}]
\label{newton}
Let $u^*\in\cU$ be a solution to the nonlinear equation \eqref{leqt}. Suppose that Assumption \ref{asstf} holds and $T_F(u^*)$ is nonsingular, i.e., all $V\in T_F(u^*)$ are nonsingular, and $F$ is semismooth at $u^*$.
Then, there exists a positive number $\varepsilon>0$ such that for any $u^0\in {\mathds B}_{\varepsilon}(u^*)$, the sequence $\{u^k\}$ generated by the semismooth Newton method is well-defined and converges to $u^*$ with a superlinear rate, i.e., for all sufficiently large $k$, it holds that
$$\|u^{k+1}-u^*\|=o(\|u^k-u^*\|).$$
Furthermore, if F is strongly semismooth at $u^*$, then the convergence rate is Q-quadratic, i.e., for all sufficiently large $k$, it holds that $\|u^{k+1}-u^*\|=O(\|u^k-u^*\|^2)$.
\end{theorem}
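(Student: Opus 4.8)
The final statement is the classical local convergence theorem for the semismooth Newton method, so the plan is to follow the standard template, with Assumption \ref{asstf} supplying the two ingredients one always needs: uniform invertibility of the Jacobian surrogates near the solution, and a linearization error that is negligible relative to the distance to $u^*$. Throughout, since $u^*$ solves \eqref{leqt} we have $F(u^*)=0$.

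\emph{Step 1: uniform nonsingularity.} First I would show there exist $\varepsilon_0>0$ and $M>0$ such that every $V\in T_F(u)$ with $u\in{\mathds B}_{\varepsilon_0}(u^*)$ is nonsingular with $\|V^{-1}\|\le M$. Because $F$ is locally Lipschitz, $\partial_C F$ is locally bounded; since $T_F(u)\Delta u\subset\partial_C F(u)\Delta u$ for all $\Delta u$ by Assumption \ref{asstf}(c), the set-valued map $T_F$ is locally bounded near $u^*$ as well. If the claim failed, there would be $u^j\to u^*$ and $V^j\in T_F(u^j)$ that are either singular or satisfy $\|(V^j)^{-1}\|\to\infty$; by local boundedness a subsequence converges, $V^j\to V$, and outer semicontinuity (Assumption \ref{asstf}(b)) gives $V\in T_F(u^*)$, while $V$ would necessarily be singular — contradicting the hypothesis that every element of the compact (Assumption \ref{asstf}(a)) set $T_F(u^*)$ is nonsingular. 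I expect this to be the main point at which Assumption \ref{asstf} does real work.

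\emph{Step 2: the linearization error.} Next I would upgrade the semismoothness property \eqref{ss1} to the uniform estimate: for every $\eta>0$ there is $\delta(\eta)\in(0,\varepsilon_0]$ with
$$
\|F(u)-V(u-u^*)\|\le\eta\,\|u-u^*\|\qquad\text{for all }u\in{\mathds B}_{\delta(\eta)}(u^*),\ V\in T_F(u).
$$
The definition \eqref{ss1} is phrased for $V\in\partial F$ along sequences, but the componentwise structure shows each $F_i$ is semismooth at $u^*$, so the estimate holds with $\partial_C F$ in place of $\partial F$, hence with $T_F$ by Assumption \ref{asstf}(c); the passage from the sequential form to the uniform ``for all $u$ in a ball, for all $V$'' form is the usual contradiction-plus-subsequence argument, and if $F$ is strongly semismooth the right-hand side sharpens to $O(\|u-u^*\|^2)$.

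\emph{Step 3: contraction and induction.} Fix $\eta:=1/(2M)$ and set $\varepsilon:=\delta(\eta)$. If $u^k\in{\mathds B}_{\varepsilon}(u^*)$, then Step 1 makes the linear system $F(u^k)+V^k\Delta u=0$ uniquely solvable for any choice $V^k\in T_F(u^k)$, and $u^{k+1}=u^k+\Delta u^k$ satisfies
$$
u^{k+1}-u^*=(V^k)^{-1}\big(V^k(u^k-u^*)-F(u^k)\big),
$$
so Steps 1–2 give $\|u^{k+1}-u^*\|\le M\eta\,\|u^k-u^*\|=\tfrac12\|u^k-u^*\|$. Hence, for any $u^0\in{\mathds B}_{\varepsilon}(u^*)$, induction shows the whole sequence stays in ${\mathds B}_{\varepsilon}(u^*)$, is well-defined, and converges to $u^*$. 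Once $u^k$ is close, Step 2 applies with $\eta$ arbitrarily small, and the displayed identity then yields $\|u^{k+1}-u^*\|=o(\|u^k-u^*\|)$; under strong semismoothness the sharpened bound instead gives $\|u^{k+1}-u^*\|=O(\|u^k-u^*\|^2)$. As this is exactly \cite[Theorem 3.2]{qi93}, the remaining details can be imported verbatim.
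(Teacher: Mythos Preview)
The paper does not supply its own proof of this statement: Theorem \ref{newton} is quoted verbatim from \cite[Theorem 3.2]{qi93} and used as a black box, with the surrounding discussion only extracting the two consequences (uniform nonsingularity near $u^*$ and the estimate \eqref{smallo}) that feed into Proposition \ref{specialnewton}. Your three-step argument is precisely the standard proof of that cited result, and it is correct; there is nothing to compare against here beyond noting that your Steps 1--2 are exactly the ingredients the paper singles out informally in the paragraph following the theorem.
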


As can be observed from the proof of \cite[Theorem 3.2]{qi93},
the nonsingularity of $T_{F}(u^*)$ implies that $T_{F}(\cdot)$ is uniformly bounded and nonsingular in a neighborhood of $u^*$.
Meanwhile, the semismoothness of $F$ guarantees that, for any sequence $\{u^{k}\}$ converging to $u^*$ with $k$ being sufficiently large,
\begin{equation}
\label{smallo}
F(u^{k})-F(u^*)-V^{k}(u^{k}-u^*)=o(\|u^{k}-u^*\|), \quad\forall \ V^{k}\in T_{F}(u^{k}).
\end{equation}
In fact, from the proof of \cite[Theorem 3.2]{qi93}, one also can see that, even if the mapping $T_{F}$ fails to satisfy Assumption \ref{asstf}(c), the semismooth Newton method is well-defined and convergent if, additionally, \eqref{smallo} holds.
Consequently a much broader algorithmic framework of nonsmooth (generalized) Newton methods emerges beyond the semismooth newton method.
This observation constitutes the basis for designing the second-order method of multipliers in the subsequent section.
For the convenience of further discussions, we summarize the facts mentioned above as the following proposition.

\begin{proposition}
\label{specialnewton}
Let $u^*\in\cU$ be a solution to the nonlinear equation \eqref{leqt}. Suppose that Assumption \ref{asstf}(a) and Assumption \ref{asstf}(b) hold, and \eqref{smallo} holds for any sequence $\{u^{k}\}$ converging to $u^*$.
If $T_F(u^*)$ is nonsingular and $F$ is semismooth at $u^*$, then there exists a positive number $\varepsilon>0$ such that for any $u^0\in {\mathds B}_{\varepsilon}(u^*)$, the sequence $\{u^k\}$ generated by the semismooth Newton method is well-defined and converges to $u^*$ with a superlinear rate.\end{proposition}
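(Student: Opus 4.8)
The plan is to deduce Proposition~\ref{specialnewton} by essentially re-running the convergence argument of \cite[Theorem~3.2]{qi93}, but isolating the two facts that actually drive that proof so that Assumption~\ref{asstf}(c) is never invoked. The first fact is a \emph{local uniform boundedness and nonsingularity} statement: since $T_F(u^*)$ is nonempty, compact (Assumption~\ref{asstf}(a)) and consists only of nonsingular operators, and since $T_F$ is outer semicontinuous (Assumption~\ref{asstf}(b)), a standard compactness/closed-graph argument yields an $\varepsilon_0>0$ and a constant $M>0$ such that for every $u\in{\mathds B}_{\varepsilon_0}(u^*)$ and every $V\in T_F(u)$, $V$ is invertible with $\|V^{-1}\|\le M$. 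The second fact is precisely the hypothesis \eqref{smallo}, which replaces the role played in \cite{qi93} by combining semismoothness with the inclusion $T_F(u)\Delta u\subset\partial_C F(u)\Delta u$: it guarantees that along any sequence $u^k\to u^*$ the generalized-Newton linearization error is $o(\|u^k-u^*\|)$.

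Given these two ingredients, the iteration is shown to be well-defined and convergent by the usual induction. First I would shrink $\varepsilon\le\varepsilon_0$ so that, by \eqref{smallo} applied to an arbitrary sequence in ${\mathds B}_{\varepsilon}(u^*)$, there is a nondecreasing function $\delta(\cdot)$ with $\delta(t)\to 0$ as $t\downarrow 0$ and
\[
\|F(u)-F(u^*)-V(u-u^*)\|\le \delta(\|u-u^*\|)\,\|u-u^*\|,\qquad \forall\, u\in{\mathds B}_{\varepsilon}(u^*),\ V\in T_F(u),
\]
and further require $M\,\delta(\varepsilon)<1/2$. Here one must be slightly careful: \eqref{smallo} is stated sequentially, so to get such a uniform modulus $\delta$ one argues by contradiction --- if no such modulus existed, one could extract a sequence $u^k\to u^*$ and $V^k\in T_F(u^k)$ violating \eqref{smallo}, contradicting the hypothesis. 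Then, assuming inductively that $u^k\in{\mathds B}_{\varepsilon}(u^*)$, the step-2 linear system $F(u^k)+V^k\Delta u^k=0$ is solvable since $V^k$ is nonsingular, and using $F(u^*)=0$,
\[
u^{k+1}-u^*=u^k-u^*-(V^k)^{-1}F(u^k)=-(V^k)^{-1}\big(F(u^k)-F(u^*)-V^k(u^k-u^*)\big),
\]
whence $\|u^{k+1}-u^*\|\le M\,\delta(\|u^k-u^*\|)\,\|u^k-u^*\|\le \tfrac12\|u^k-u^*\|$, which keeps the iterate in the ball and gives linear contraction. Superlinearity is then immediate: $\delta(\|u^k-u^*\|)\to 0$ because $u^k\to u^*$, so $\|u^{k+1}-u^*\|=o(\|u^k-u^*\|)$.

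The only genuinely delicate point --- the ``main obstacle'' --- is the passage from the sequential estimate \eqref{smallo} to the uniform modulus $\delta$, together with confirming that the local uniform nonsingularity/boundedness of $T_F$ really follows from outer semicontinuity plus compactness of $T_F(u^*)$ alone (this is where one uses that $\{V\in{\mathds L}(\cU,\cU):\|V^{-1}\|\le M\}$ is closed and that an outer semicontinuous, locally bounded, compact-valued mapping has closed graph near $u^*$). Everything else is the textbook Newton contraction argument. Since the paper explicitly says these facts are ``as can be observed from the proof of \cite[Theorem~3.2]{qi93}'', I would keep the write-up short, cite that proof, and just indicate the two substitutions: Assumption~\ref{asstf}(a)--(b) $\Rightarrow$ local uniform bounded nonsingularity of $T_F$, and \eqref{smallo} $\Rightarrow$ the linearization-error bound, which together are all that the original argument consumes.
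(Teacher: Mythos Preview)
Your proposal is correct and matches the paper's approach exactly: the paper's proof is a single sentence, ``By using \eqref{smallo} instead of Assumption~\ref{asstf}(c) and repeating the proof of \cite[Theorem 3.2]{qi93}, one can get the result.'' Your write-up simply unpacks this, spelling out the two substitutions (outer semicontinuity plus compactness for local uniform bounded nonsingularity, and \eqref{smallo} for the linearization-error estimate) that the paper leaves implicit.
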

\begin{proof}
By using \eqref{smallo} instead of Assumption \ref{asstf}(c) and repeating the proof of \cite[Theorem 3.2]{qi93},
one can get the result.
\end{proof}

\section{A second-order method of multipliers}
\label{sec3}
In this section, we propose a second-order method of multipliers for solving problem \eqref{op}.
This method is essentially a specially designed nonsmooth (generalized) Newton method, which is based on Proposition \ref{specialnewton}.
Throughout this section, we make the blanket assumption that the projection operator $\Pi_{K}(\cdot)$ is \emph{strongly semismooth}, as this will be fulfilled for all the three classes of problems that will be studied in the forthcoming three sections.

The Lagrangian function of problem \eqref{op} is defined by
\[
\label{laf}
\cL_0(x, \lambda, \mu):=f(x)-\langle\lambda, h(x)\rangle-\langle\mu,g(x)\rangle,
\quad
\forall(x,\lambda,\mu)\in\cX\times\Re^{m}\times\cZ.
\]
A vector $x^*\in\cX$ is called as a stationary point of problem \eqref{op} if there exists a vector $(\lambda^*,\mu^*)\in\Re^{m}\times\cZ$ such that $(x^*,\lambda^*,\mu^*)$ is a solution to the following Karush-Kuhn-Tucker (KKT) system:
\begin{equation}
\label{kkt_2nd}
\left\{
\begin{array}{l}
\nabla_x \cL_0(x,\lambda,\mu)=0, \\[2mm]
h(x)=0, \\[2mm]
g(x)\in K,\ \ \mu\in K,\ \ \langle\mu,g(x)\rangle=0.
\end{array}
\right.
\end{equation}
In this case, the vector $(\lambda^*,\mu^*)$ is called a Lagrange multiplier at $x^*$.
We denote the set of all the Lagrangian multipliers at $x\in\cX$ by $\cM(x)$, which is an empty set if $x$ fails to be a stationary point.

Let $c>0$ be a given constant and $x^*\in\cX$ be a given stationary point of problem \eqref{op} with the Lagrangian multipliers $(\lambda^*,\mu^*)\in\cM(x^*)$. We define the mapping
\begin{equation}
\label{def:A}
\begin{array}{r}
\cA_{(c,\lambda^*,\mu^*)}(W):=\nabla_{xx}^2\cL_0(x^*,\lambda^*,\mu^*)+c\nabla h(x^*)\cJ h(x^*)+c\nabla g(x^*)W\cJ g(x^*),
\quad
\\[2mm]
\forall\,W\in{\mathds L}(\cZ,\cZ).
\end{array}
\end{equation}
We make the following abstract assumption for the given stationary point $x^*\in\cX$.

\begin{assumption}
\label{assg}
{\bf (i)} $(\lambda^*,\mu^*)\in\cM(x^*)$ is the unique Lagrangian multiplier;

\noindent
{\bf (ii)} there exist two positive constants $\bar c$ and $\eta$ such that for any $c\ge \bar c$,
$$
\langle x,\cA_{(c,\lambda^*,\mu^*)}(W)\, x\rangle\ge\eta\| x\|^{2},
\quad
\forall\, W\in\partial_B\Pi_{K}(\mu^*-cg(x^*)),
\
\forall\, x\in \cX,
$$
where the linear operator ${\cA}_{(c,\lambda^*,\mu^*)}(W)$ is defined by \eqref{def:A}.
\end{assumption}
The following result was given in Sun et al. \cite[Proposition 1]{sun07}.

\begin{proposition}
\label{summary}
Suppose that Assumption \ref{assg} holds and $c\ge\bar c$ is fixed. Then, there exist two constants $\varepsilon>0$ and $\delta_0> 0$, both depending on $c$, and a locally Lipschitz continuous function $\bfx_c(\cdot)$ defined on ${\mathds B}_{\delta_0}(\lambda^*,\mu^*)$ by
\[
\label{xcy}
\bfx_c(\lambda,\mu):=\argmin_{x\in {\mathds B}_\varepsilon(x^*)}\cL_c(x,\lambda,\mu),
\]
such that:
\begin{itemize}[topsep=1pt,itemsep=-.6ex,partopsep=1ex,parsep=1ex,leftmargin=4.5ex]
\item[\rm (a)] $\bfx_c(\cdot)$ is semismooth at any point in ${\mathds B}_{\delta_0}(\lambda^*,\mu^*)$;
\item[\rm (b)] for any $x\in {\mathds B}_{\varepsilon}(x^*)$ and $(\lambda,\mu)\in {\mathds B}_{\delta_0}(\lambda^*,\mu^*)$, it holds that every element in $\pi_x\partial_B(\nabla_x\cL_c)(x,\lambda,\mu)$ is positive definite;
\item[\rm (c)] For any $(\lambda,\mu)\in {\mathds B}_{\delta_0}(\lambda^*,\mu^*)$, $\bfx_c(\lambda,\mu)$ is the unique optimal solution to $$\min_{x\in {\mathds B_\varepsilon(x^*)}}\cL_c(x,\lambda,\mu).$$
\end{itemize}
\end{proposition}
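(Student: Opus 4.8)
The plan is to read Proposition~\ref{summary} as a nonsmooth implicit-function statement for the parametrized stationarity equation $\Phi(x,\lambda,\mu):=\nabla_x\cL_c(x,\lambda,\mu)=0$, after first securing a uniform positive-definiteness estimate for the generalized Jacobian of $\Phi$ in $x$ near $(x^*,\lambda^*,\mu^*)$. I would begin by writing $\Phi$ out explicitly: using the well-known identity $\nabla_z\big(\tfrac12\|\Pi_K(z)\|^2\big)=\Pi_K(z)$ for the closed convex cone $K$, one gets
\[
\Phi(x,\lambda,\mu)=\nabla f(x)-\nabla h(x)\big(\lambda-ch(x)\big)-\nabla g(x)\,\Pi_K\big(\mu-cg(x)\big).
\]
Since $\Pi_K$ is strongly semismooth by the blanket assumption of Section~\ref{sec3} and $f,h,g$ are $C^2$, the map $\Phi$ is locally Lipschitz and (strongly) semismooth jointly in $(x,\lambda,\mu)$. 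At the KKT triple one has $h(x^*)=0$ and $\Pi_K(\mu^*-cg(x^*))=\mu^*$ (the variational inequality characterizing $\Pi_K$ reduces exactly to $\mu^*\in K$, $-cg(x^*)\in -K$, $\langle\mu^*,g(x^*)\rangle=0$), so $\Phi(x^*,\lambda^*,\mu^*)=\nabla_x\cL_0(x^*,\lambda^*,\mu^*)=0$.

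Next I would identify the curvature information and then propagate it. Differentiating $\Phi$ in $x$, the terms carrying the factor $h(x^*)=0$ drop out, the remaining $C^2$ contributions assemble into $\nabla_{xx}^2\cL_0(x^*,\lambda^*,\mu^*)+c\nabla h(x^*)\cJ h(x^*)$, and the chain rule applied to $x\mapsto\Pi_K(\mu-cg(x))$ contributes operators of the form $c\nabla g(x^*)W\cJ g(x^*)$; since $\cA_{(c,\lambda^*,\mu^*)}(\cdot)$ is affine, the pointwise lower bound in Assumption~\ref{assg}(ii) over $W\in\partial_B\Pi_K(\mu^*-cg(x^*))$ extends to its convex hull, so every element of $\pi_x\partial_B\Phi(x^*,\lambda^*,\mu^*)$, and more generally of the partial Clarke generalized Jacobian, satisfies $\langle x,\,Vx\rangle\ge\eta\|x\|^2$ for all $x\in\cX$. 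To obtain item~(b) I would push this to a neighborhood: the multifunctions $(x,\lambda,\mu)\mapsto\partial_B\Phi(x,\lambda,\mu)$ and $z\mapsto\partial_B\Pi_K(z)$ are locally bounded (local Lipschitzness) and outer semicontinuous, so a compactness-and-contradiction argument — if coercivity with constant $\tfrac{\eta}{2}$ failed along a sequence $(x^k,\lambda^k,\mu^k)\to(x^*,\lambda^*,\mu^*)$ with $V^k\in\pi_x\partial_B\Phi$ and unit $d^k$, pass to limits and contradict Assumption~\ref{assg}(ii) — together with continuity of the $C^2$ data produces $\varepsilon>0$ and $\delta_0>0$ with $\langle d,Vd\rangle\ge\tfrac{\eta}{2}\|d\|^2$ for every $V\in\pi_x\partial_B\Phi(x,\lambda,\mu)$, all $d\in\cX$, $x\in\mathds{B}_\varepsilon(x^*)$ and $(\lambda,\mu)\in\mathds{B}_{\delta_0}(\lambda^*,\mu^*)$.

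With (b) in hand, items (a) and (c) follow from standard nonsmooth implicit-function machinery. All elements of the partial Clarke generalized Jacobian of $\Phi$ in $x$ at $(x^*,\lambda^*,\mu^*)$ are positive definite, hence nonsingular, so Clarke's implicit function theorem (after shrinking $\delta_0$) yields a locally Lipschitz map $\bfx_c(\lambda,\mu)$ solving $\Phi(x,\lambda,\mu)=0$ with $\bfx_c(\lambda^*,\mu^*)=x^*$ and values in $\int\mathds{B}_\varepsilon(x^*)$; because $\Phi$ is semismooth and its Jacobian surrogates are uniformly nonsingular nearby, the semismooth implicit function theorem upgrades $\bfx_c(\cdot)$ to a semismooth map on $\mathds{B}_{\delta_0}(\lambda^*,\mu^*)$, which is item~(a). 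For item~(c), the estimate in (b) combined with Clarke's nonsmooth mean value theorem gives $\langle\Phi(x',\lambda,\mu)-\Phi(x,\lambda,\mu),\,x'-x\rangle\ge\tfrac{\eta}{2}\|x'-x\|^2$ on the convex set $\mathds{B}_\varepsilon(x^*)$, i.e., $\cL_c(\cdot,\lambda,\mu)$ is strongly convex there; hence it has a unique minimizer over $\mathds{B}_\varepsilon(x^*)$, and the interior stationary point $\bfx_c(\lambda,\mu)$ must be that minimizer, which simultaneously verifies that $\bfx_c$ agrees with the $\argmin$ in~\eqref{xcy}.

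The step I expect to be the main obstacle is the identification-and-propagation argument in the middle paragraph. The generalized-Jacobian chain rule for $x\mapsto\Pi_K(\mu-cg(x))$ only gives the Clarke inclusion $\partial\big(\Pi_K(\mu-cg(\cdot))\big)(x)\subseteq\partial\Pi_K(\mu-cg(x))\,(-c\cJ g(x))$ in general (equality would require, e.g., surjectivity of $\cJ g(x^*)$, which is not assumed), so one must route the argument through the Clarke generalized Jacobian and exploit the affineness of $\cA_{(c,\lambda^*,\mu^*)}(\cdot)$ to transfer Assumption~\ref{assg}(ii) from $\partial_B\Pi_K$ to its convex hull, and then use outer semicontinuity to retain the $\tfrac{\eta}{2}$-coercivity at nearby points. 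The companion subtlety is converting the pointwise positive definiteness of all generalized Jacobians of $\Phi(\cdot,\lambda,\mu)$ into genuine strong monotonicity on $\mathds{B}_\varepsilon(x^*)$, which is exactly where the nonsmooth mean value theorem is needed.
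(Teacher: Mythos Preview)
Your proposal is correct and follows the standard route to this result. Note, however, that the paper does not give its own proof of Proposition~\ref{summary}: it simply cites it as \cite[Proposition~1]{sun07}. The argument you outline---computing $\nabla_x\cL_c$ explicitly, identifying $\pi_x\partial_B(\nabla_x\cL_c)(x^*,\lambda^*,\mu^*)$ with operators $\cA_{(c,\lambda^*,\mu^*)}(W)$ for $W\in\partial\Pi_K(\mu^*-cg(x^*))$ via the Clarke chain rule, pushing the coercivity to a neighborhood by outer semicontinuity, and then invoking the semismooth implicit function theorem together with strong convexity on $\mathds{B}_\varepsilon(x^*)$---is essentially the proof given in \cite{sun07}, so there is nothing to contrast.
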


If Assumption \ref{assg} holds and $c\ge \bar c$ is fixed, we can also fix the two parameters $\varepsilon>0$ and $\delta_0>0$ such that the conclusions (a)--(c) in Proposition \ref{summary} hold.
In this case, we can define the real-valued function
\begin{equation}
\label{vartheta}
\vartheta_c(\lambda,\mu):=\min_{x\in {\mathds B}_{\varepsilon}(x^*)}\cL_c(x,\lambda,\mu),\quad \forall (\lambda,\mu)\in{\mathds B}_{\delta_0}(\lambda^*,\mu^*).
\end{equation}
Obviously, $\vartheta_c(\cdot)$ is concave on its domain and
$$\vartheta_c(\lambda,\mu)=\cL_c(\bfx_c(\lambda,\mu),\lambda,\mu),\ \forall (\lambda,\mu)\in {\mathds B}_{\delta_0}(\lambda^*,\mu^*).$$
Moreover, the following result can be obtained from Proposition \ref{summary}, and one may refer to Sun et al. \cite[Proposition 2]{sun07} for the detailed proof.
\begin{lemma}
\label{lemma:nal}
Suppose that Assumption \ref{assg} holds and $c\ge \bar c$ is fixed.
Let the two constants $\varepsilon>0$ and $\delta_0>0$ be given by Proposition \ref{summary}.
Then, the concave function $\vartheta_c(\cdot)$ defined by \eqref{vartheta} is continuously differentiable on ${\mathds B}_{\delta_0}(\lambda^*,\mu^*)$ with its gradient given by
\[
\label{nablavartheta}
\nabla\vartheta_c(\lambda,\mu)=\left(\begin{matrix}
-h(\bfx_c(\lambda,\mu))
\\[1mm]
\displaystyle
\frac{1}{c}\,\Pi_K\big(\mu-cg(\bfx_c(\lambda,\mu))\big)-\frac{\mu}{c}
\end{matrix}\right).
\]
Moreover, $\nabla\vartheta_c(\cdot)$ is semismooth on ${\mathds B}_{\delta_0}(\lambda^*,\mu^*)$.
\end{lemma}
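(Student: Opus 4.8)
The plan is to establish the three assertions of Lemma \ref{lemma:nal} in turn: the $C^1$ property together with the gradient formula \eqref{nablavartheta}, the continuity of that gradient, and finally the semismoothness of $\nabla\vartheta_c(\cdot)$.

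For the differentiability I would use a Danskin-type marginal-function argument. Fix $c\ge\bar c$ and the parameters $\varepsilon,\delta_0$ supplied by Proposition \ref{summary}. For each fixed $x\in\cX$ the partial map $(\lambda,\mu)\mapsto\cL_c(x,\lambda,\mu)$ is continuously differentiable: the only possibly nonsmooth term in \eqref{alf} is $\tfrac{1}{2c}\|\Pi_K(\mu-cg(x))\|^2$, but $y\mapsto\tfrac12\|\Pi_K(y)\|^2$ is a $C^1$ convex function with gradient $\Pi_K(y)$ (a classical consequence of the orthogonality $\langle\Pi_K(y),y-\Pi_K(y)\rangle=0$ for the closed convex cone $K$, together with the $C^1$ smoothness of $\tfrac12\dist(\cdot,K)^2$). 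Hence $\nabla_{(\lambda,\mu)}\cL_c$ exists and is jointly continuous in $(x,\lambda,\mu)$. Since the closure of ${\mathds B}_\varepsilon(x^*)$ is compact and, by Proposition \ref{summary}(c), the minimizer $\bfx_c(\lambda,\mu)$ of $\cL_c(\cdot,\lambda,\mu)$ over that set is unique for every $(\lambda,\mu)\in{\mathds B}_{\delta_0}(\lambda^*,\mu^*)$, the marginal function is differentiable with $\nabla\vartheta_c(\lambda,\mu)=\nabla_{(\lambda,\mu)}\cL_c(\bfx_c(\lambda,\mu),\lambda,\mu)$. (Equivalently, since $\bfx_c(\lambda,\mu)$ lies in the interior of ${\mathds B}_\varepsilon(x^*)$, where $\nabla_x\cL_c=0$, the same identity follows from a chain-rule computation that uses only local Lipschitz continuity of $\bfx_c(\cdot)$, the term carrying a generalized Jacobian of $\bfx_c$ being annihilated by $\nabla_x\cL_c=0$.)

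Next I would compute $\nabla_{(\lambda,\mu)}\cL_c$ explicitly from \eqref{alf}: differentiating in $\lambda$ yields $-h(x)$, and differentiating in $\mu$ yields $\tfrac1c\Pi_K(\mu-cg(x))-\tfrac1c\mu$ by the gradient formula for $\tfrac12\|\Pi_K(\cdot)\|^2$ recalled above. Substituting $x=\bfx_c(\lambda,\mu)$ gives precisely the right-hand side of \eqref{nablavartheta}. Continuity of $\nabla\vartheta_c(\cdot)$ on ${\mathds B}_{\delta_0}(\lambda^*,\mu^*)$ is then immediate, since $\bfx_c(\cdot)$ is locally Lipschitz (hence continuous) by Proposition \ref{summary} and $h$, $g$, $\Pi_K$ are continuous; thus $\vartheta_c\in C^1$, and its concavity is already recorded right after \eqref{vartheta}. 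For the last claim, I would view $\nabla\vartheta_c(\cdot)$ as a composition: the map $(\lambda,\mu)\mapsto(\lambda,\mu,\bfx_c(\lambda,\mu))$ is semismooth on ${\mathds B}_{\delta_0}(\lambda^*,\mu^*)$ because the identity is smooth and $\bfx_c(\cdot)$ is semismooth by Proposition \ref{summary}(a), while the outer map $(\lambda,\mu,x)\mapsto(-h(x),\,\tfrac1c\Pi_K(\mu-cg(x))-\tfrac1c\mu)$ is semismooth, being assembled from the twice continuously differentiable (hence strongly semismooth) maps $h,g$, the strongly semismooth projection $\Pi_K$, and affine maps. Since a composition of locally Lipschitz semismooth functions is semismooth, $\nabla\vartheta_c(\cdot)$ is semismooth on ${\mathds B}_{\delta_0}(\lambda^*,\mu^*)$.

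The main obstacle is the first step: one must justify differentiability of $\vartheta_c(\cdot)$ even though $\bfx_c(\cdot)$ is merely Lipschitz, i.e., carefully verify the hypotheses of the marginal-function differentiability result (joint continuity of $\nabla_{(\lambda,\mu)}\cL_c$, compactness of the constraint set, and uniqueness of the minimizer) and confirm that, under Assumption \ref{assg} with the chosen $\varepsilon,\delta_0$, these hold throughout ${\mathds B}_{\delta_0}(\lambda^*,\mu^*)$. The explicit gradient computation and the semismooth-composition argument are then routine.
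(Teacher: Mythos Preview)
Your proposal is correct and follows the standard route. The paper itself does not give a self-contained proof of this lemma; it simply states that the result ``can be obtained from Proposition~\ref{summary}'' and refers the reader to Sun et al.\ \cite[Proposition~2]{sun07} for the details. Your Danskin-type argument for the $C^1$ property and gradient formula, together with the composition argument for semismoothness (using Proposition~\ref{summary}(a) for $\bfx_c(\cdot)$ and the blanket strong semismoothness of $\Pi_K$), is precisely the kind of argument that reference contains, so there is no meaningful difference in approach to report.
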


Now we start to propose our second-order method of multipliers for solving problem \eqref{op}. Suppose that we have obtained a certain $(\lambda,\mu)\in {\mathds B}_{\delta_0}(\lambda^*,\mu^*)$, which we denote by $(\lambda^k,\mu^k)$ with the index $k\ge 0$ for convenience. Then, the procedure in \eqref{update} to get $(\lambda^{k+1},\mu^{k+1})$ with $x^{k+1}:=\bfx_c(\lambda^k,\mu^k)$ can be reformulated as
$$
(\lambda^{k+1},\mu^{k+1})
=(\lambda^{k},\mu^{k})
+c\nabla\vartheta_c(\lambda^k,\mu^k),
$$
which can be interpreted as a (first-order) gradient ascent step at $(\lambda^k,\mu^k)$ for the purpose of maximizing $\vartheta_c$.
Thus, it is natural to consider if one can introduce a second-order update rule for the multipliers based on this (first-order) gradient information.
Obviously, the classical Newton method cannot be applied directly, due to the nonsmooth property induced by the projection $\Pi_K(\cdot)$.
However, even though the function $\nabla\vartheta_c$ is semismooth on ${\mathds B}_{\delta_0}(\lambda^*,\mu^*)$ (according to Lemma \ref{lemma:nal}), the semismooth Newton method introduced in Section \ref{ssn} is still not directly applicable, inasmuch as the fact that $\nabla\vartheta_c$ is a composite function and it is highly difficult to find a mapping $T_{\vartheta_c}:\Re^m\times\cZ\rightrightarrows{\mathds L}(\Re^m\times\cZ,\Re^m\times\cZ)$ such that
\[
\label{pbvt}
\begin{array}{rl}
\partial_B (\nabla\vartheta_c)(\lambda,\mu)y
\subset T_{\vartheta_c}(\lambda,\mu)y
\subset \partial_C(\nabla\vartheta_c)(\lambda,\mu)y,\qquad\qquad
\\[2mm]
\qquad\qquad
 \forall\, (\lambda,\mu)\in{\mathds B}_{\delta_0}(\lambda^*,\mu^*),\ \forall\,y\in\Re^m\times\cZ.
\end{array}
\]
Fortunately, thanks to Proposition \ref{specialnewton}, even if \eqref{pbvt} fails, one can still hope to design a second-order method to address this issue.
Next, we will design a specific nonsmooth Newton method to ensure a second-order method of multipliers for problem \eqref{op}.

For this purpose, we suppose that Assumption \ref{assg} holds and $c\ge\bar c$ is fixed, so that we can define the functions
\[
\label{multiplus}
\left\{
\begin{array}{ll}
{\lambda}_c(\lambda,\mu):=\lambda-ch(\bfx_c(\lambda,\mu)),
\\[2mm]
\mu_c(\lambda,\mu):=\Pi_K\left(\mu-cg(\bfx_c(\lambda,\mu))\right),
\end{array}\right.
\quad
\forall (\lambda,\mu)\in {\mathds B}_{\delta_0}(\lambda^*,\mu^*),
\]
and
\[
\label{a2}
\begin{array}{r}
\displaystyle
\cA_c(\lambda,\mu,W):=\nabla_{xx}^2\cL_0(\bfx_c(\lambda,\mu),\lambda_c(\lambda,\mu),\mu_c(\lambda,\mu))
+c\nabla h(\bfx_c(\lambda,\mu))\cJ h(\bfx_c(\lambda,\mu))
\qquad
\\[2mm]
+c\nabla g(\bfx_c(\lambda,\mu))W\cJ g(\bfx_c(\lambda,\mu)),
\qquad
\forall\,(\lambda,\mu)\in {\mathds B}_{\delta_0}(\lambda^*,\mu^*),\
\forall\,W\in{\mathds L}(\cZ,\cZ),
\end{array}
\]
where both the constant $\delta_0$ and the function $\bfx_c(
\cdot)$ are specified by Proposition \ref{summary}.
In this case, we can also define the following set-valued mapping (from $\Re^m\times\cZ$ to ${\mathds L}(\Re^m\times\cZ,\Re^m\times\cZ)$)
\[
\label{v}
\begin{array}{l}
{\mathds V}_c(\lambda,\mu)
:=
\left\{
-
\begin{pmatrix}
\cJ h(\bfx_c(\lambda,\mu))\\[2mm]
W\cJ g(\bfx_c(\lambda,\mu))
\end{pmatrix}
(\cA_c(\lambda,\mu,W))^{-1}
\Big(\nabla h(\bfx_c(\lambda,\mu)),\nabla g(\bfx_c(\lambda,\mu))W \Big)
\right.
\quad
\\[6mm]
\hfill
\left.
-\begin{pmatrix}
0&0\\[2mm]
0& \frac{1}{c}(\cI_{\cZ}-W)
\end{pmatrix}
\mid
W\in\partial_B\Pi_K\Big(\mu-cg\big(\bfx_c(\lambda,\mu)\big)\Big)
\right\},
\end{array}
\]
where $\cI_\cZ$ denotes the identity operator in $\cZ$.
According to \eqref{kkt_2nd} and \eqref{def:A}, one has that
\[
\label{alimit}
\cA_c(\lambda^*,\mu^*,W)=\cA_{(c,\lambda^*,\mu^*)}(W),
\quad\forall\, W\in\partial_B\Pi_K(\mu^*-cg(x^*)).
\]
The following result on the properties of the mapping ${\mathds V}_c$ defined above is crucial to the forthcoming algorithmic design. Its proof is given in Appendix \ref{apppp2}, which is mainly based on the work of \cite{chenlalm}.

\begin{proposition}
\label{ppvy}
Suppose that Assumption \ref{assg} holds and $c\ge \bar c$. Then, there exists a constant $\delta_1>0$ ($\delta_1\le \delta_0$) such that the mapping ${\mathds V}_c$ in \eqref{v} is well-defined, compact-valued and outer semicontinuous on ${\mathds B}_{\delta_1}(\lambda^*,\mu^*)$. Moreover, it holds that $$\partial_B(\nabla\vartheta_c)(\lambda,\mu)
\subseteq
{\mathds V}_{c}(\lambda,\mu),\quad
\forall(\lambda,\mu)\in{\mathds B}_{\delta_1}(\lambda^*,\mu^*).$$
\end{proposition}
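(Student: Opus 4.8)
The plan is to split the statement into two blocks: (I) the structural properties of ${\mathds V}_c$ — well-definedness, compact-valuedness and outer semicontinuity on a sufficiently small ball — and (II) the inclusion $\partial_B(\nabla\vartheta_c)(\lambda,\mu)\subseteq{\mathds V}_c(\lambda,\mu)$. Block (I) rests on the observation that, by \eqref{alimit}, the operator $\cA_c(\lambda^*,\mu^*,W)$ coincides with the operator $\cA_{(c,\lambda^*,\mu^*)}(W)$ in Assumption \ref{assg}(ii), which is uniformly positive definite; block (II) is an envelope-type differentiation of the formula \eqref{nablavartheta} for $\nabla\vartheta_c$ combined with a pointwise implicit-function argument, and is where the substantive work lies. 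Throughout, I will write $G_c(\lambda,\mu):=\mu-cg(\bfx_c(\lambda,\mu))$ and $\Phi_c(\lambda,\mu,W)$ for the ${\mathds L}(\Re^m\times\cZ,\Re^m\times\cZ)$-valued expression obtained from the one inside the braces of \eqref{v} by freezing a single $W$ in place of the set $\partial_B\Pi_K(G_c(\lambda,\mu))$.

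For block (I), I would first show that there is $\delta_1\in(0,\delta_0]$ such that $\cA_c(\lambda,\mu,W)$ is uniformly positive definite — hence invertible with uniformly bounded inverse — for every $(\lambda,\mu)\in{\mathds B}_{\delta_1}(\lambda^*,\mu^*)$ and every $W\in\partial_B\Pi_K(G_c(\lambda,\mu))$. If not, there are $(\lambda^j,\mu^j)\to(\lambda^*,\mu^*)$, $W^j\in\partial_B\Pi_K(G_c(\lambda^j,\mu^j))$ and unit vectors $x^j$ with $\langle x^j,\cA_c(\lambda^j,\mu^j,W^j)x^j\rangle<\eta/2$. Since $\bfx_c(\cdot)$ is continuous by Proposition \ref{summary}, $G_c(\lambda^j,\mu^j)\to\mu^*-cg(x^*)$, so $\{W^j\}$ is bounded (local boundedness of $\partial_B\Pi_K$) and, along a subsequence, $W^j\to\bar W\in\partial_B\Pi_K(\mu^*-cg(x^*))$ by the closed graph of $\partial_B\Pi_K$; also $x^j\to\bar x$, $\|\bar x\|=1$. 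Passing to the limit, using continuity of $\nabla_{xx}^2\cL_0$, $\cJ h$, $\cJ g$, $\bfx_c$, $\lambda_c$, $\mu_c$ and \eqref{alimit}, gives $\langle\bar x,\cA_{(c,\lambda^*,\mu^*)}(\bar W)\bar x\rangle\le\eta/2$, contradicting Assumption \ref{assg}(ii). Granting this, $\Phi_c$ is continuous on ${\mathds B}_{\delta_1}(\lambda^*,\mu^*)\times{\mathds L}(\cZ,\cZ)$ (matrix inversion being continuous on the invertible operators, and $\cA_c(\lambda,\mu,W)$ depending continuously — indeed affinely in $W$ — on its arguments there). Hence ${\mathds V}_c(\lambda,\mu)=\Phi_c\big(\lambda,\mu,\partial_B\Pi_K(G_c(\lambda,\mu))\big)$ is the continuous image of a nonempty compact set, so it is nonempty and compact. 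Outer semicontinuity then follows by a standard argument: given $(\lambda^j,\mu^j)\to(\lambda,\mu)$ in ${\mathds B}_{\delta_1}(\lambda^*,\mu^*)$ and ${\mathds V}_c(\lambda^j,\mu^j)\ni V^j=\Phi_c(\lambda^j,\mu^j,W^j)\to V$, the bounded $W^j$ subconverge to some $W$, which lies in $\partial_B\Pi_K(G_c(\lambda,\mu))$ by continuity of $G_c$ and the closed graph of $\partial_B\Pi_K$, whence $V=\Phi_c(\lambda,\mu,W)\in{\mathds V}_c(\lambda,\mu)$ by continuity of $\Phi_c$.

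For block (II), let $E\subseteq{\mathds B}_{\delta_1}(\lambda^*,\mu^*)$ be the set of $(\lambda,\mu)$ at which $\Pi_K$ is Fr\'echet-differentiable at $G_c(\lambda,\mu)$; since $\Pi_K$ is strongly semismooth (hence differentiable off a negligible set), one checks that $E$ has full Lebesgue measure, using that the non-differentiability locus of $\Pi_K$ is suitably thin. I claim that on $E$ the function $\nabla\vartheta_c$ is differentiable with $\cJ(\nabla\vartheta_c)(\lambda,\mu)=\Phi_c(\lambda,\mu,W)$, $W:=\cJ\Pi_K(G_c(\lambda,\mu))$. Indeed $\nabla_x\cL_c(x,\lambda,\mu)=\nabla f(x)-\nabla h(x)\lambda+c\nabla h(x)h(x)-\nabla g(x)\Pi_K(\mu-cg(x))$, whose only non-$C^{1}$ ingredient is $\Pi_K(\mu-cg(x))$; at $(\lambda,\mu)\in E$ this map is therefore differentiable at $(\bfx_c(\lambda,\mu),\lambda,\mu)$ with $x$-partial Jacobian equal to $\cA_c(\lambda,\mu,W)$ — the computation behind \eqref{a2} and \eqref{def:A} — which is nonsingular by block (I). Since $\bfx_c(\cdot)$ is the Lipschitz solution branch of $\nabla_x\cL_c(\bfx_c(\lambda,\mu),\lambda,\mu)=0$ furnished by Proposition \ref{summary}, a first-order expansion of this identity at $(\lambda,\mu)\in E$, combined with the local estimate $\|\bfx_c(\lambda',\mu')-\bfx_c(\lambda,\mu)\|=O(\|(\lambda',\mu')-(\lambda,\mu)\|)$, shows that $\bfx_c$ is differentiable at $(\lambda,\mu)$ with $\cJ\bfx_c(\lambda,\mu)=\cA_c(\lambda,\mu,W)^{-1}\big(\nabla h(\bfx_c(\lambda,\mu)),\nabla g(\bfx_c(\lambda,\mu))W\big)$; substituting into \eqref{nablavartheta} and differentiating reproduces precisely $\Phi_c(\lambda,\mu,W)$. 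Finally, take any $V\in\partial_B(\nabla\vartheta_c)(\lambda^\circ,\mu^\circ)$, say $V=\lim_j\cJ(\nabla\vartheta_c)(\lambda^j,\mu^j)$ with $(\lambda^j,\mu^j)\to(\lambda^\circ,\mu^\circ)$ and $\nabla\vartheta_c$ differentiable at each $(\lambda^j,\mu^j)$; after reducing, as in \cite{chenlalm}, to a sequence contained in $E$, we have $\cJ(\nabla\vartheta_c)(\lambda^j,\mu^j)=\Phi_c(\lambda^j,\mu^j,W^j)$ with $W^j\in\partial_B\Pi_K(G_c(\lambda^j,\mu^j))$; the bounded $W^j$ subconverge to some $W\in\partial_B\Pi_K(G_c(\lambda^\circ,\mu^\circ))$ (closed graph of $\partial_B\Pi_K$, continuity of $G_c$), and continuity of $\Phi_c$ gives $V=\Phi_c(\lambda^\circ,\mu^\circ,W)\in{\mathds V}_c(\lambda^\circ,\mu^\circ)$.

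The hard part is the last reduction — replacing an arbitrary sequence of differentiability points of the composite function $\nabla\vartheta_c$ by one lying in $E$ without changing the limit of the corresponding Jacobians — together with the verification that $E$ is large enough for this to be possible. For polyhedral $K$ this is comparatively transparent because $\Pi_K$ is piecewise affine, but for the non-polyhedral cones treated later $\Pi_K$ is only piecewise smooth, so both its Bouligand subdifferential and the geometry of its non-differentiability set require the more delicate variational analysis of \cite{chenlalm}; in particular one must ensure that the surrogate Jacobian obtained in the limit is generated by an element of $\partial_B\Pi_K$ at the limit point, rather than merely of its convex hull $\partial\Pi_K$. The remaining ingredients — the envelope-type differentiation above and the continuity/compactness bookkeeping in block (I) — are routine, and the details are carried out in Appendix \ref{apppp2}.
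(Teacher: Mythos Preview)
Your block (I) is essentially the paper's argument, only phrased via a contradiction with sequences rather than through the outer-semicontinuity calculus of \cite[Propositions 5.12, 5.51, 5.52]{va} that the paper cites; the content is the same.

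For block (II) the paper takes a different and shorter route that sidesteps precisely the step you label ``hard''. Instead of introducing a set $E$ where $\Pi_K$ is Fr\'echet differentiable at $G_c(\lambda,\mu)$ and then reducing to it, the paper works directly on $D_{\nabla\vartheta_c}$ using only \emph{directional} derivatives, which exist everywhere by semismoothness (Proposition \ref{summary}(a) and Lemma \ref{lemma:nal}). From the identity $\nabla_x\cL_0(\bfx_c(y),\lambda_c(y),\mu_c(y))\equiv 0$ one differentiates directionally along $\Delta y$; the semismoothness of $\Pi_K$ then furnishes, for that direction, some $\widehat W\in\partial_B\Pi_K(G_c(y))$ realizing $\Pi_K'(G_c(y);\cdot)$, and solving for $\bfx_c'(y;\Delta y)$ gives the analogue of your implicit-function formula without ever asking whether $\Pi_K$ is Fr\'echet differentiable at $G_c(y)$. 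At $y\in D_{\nabla\vartheta_c}$ the directional derivative of $\nabla\vartheta_c$ coincides with $\nabla^2\vartheta_c(y)(\Delta y)$, so one reads off $\nabla^2\vartheta_c(y)(\Delta y)\in\{V(\Delta y):V\in{\mathds V}_c(y)\}$ and then closes up with the outer semicontinuity of ${\mathds V}_c$. This bypasses two genuine difficulties in your scheme: first, the claim that $E$ has full measure is not automatic, since the preimage of a Lebesgue-null set under the merely Lipschitz map $G_c$ need not be null; second, even granting that, replacing an arbitrary sequence in $D_{\nabla\vartheta_c}$ by one in $E$ while preserving the limit of the Jacobians is an additional argument you do not supply (and which \cite{chenlalm}, dealing only with polyhedral $K$, does not cover in the generality you need). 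What your approach buys in return is a single, direction-independent $W$ at each point of $E$, which makes the operator-level inclusion $\cJ(\nabla\vartheta_c)(y)\in{\mathds V}_c(y)$ immediate there; the paper's route trades this for working purely with directional information.
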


Based on the above discussions, we are able to present the following second-order method of multipliers.
\begin{algorithm}[ht]
\caption{A second-order method of multipliers for problem \eqref{op}.}
\label{algo-2nd}
\leftline{Given $c>0$ and $(\lambda^0,\mu^0)\in \Re^m\times\cZ$. For $k=0,1,\ldots$,}
\begin{itemize}[topsep=1pt,itemsep=0ex,partopsep=1ex,parsep=1ex,leftmargin=4ex]
\item[\bf 1.] compute $\displaystyle x^{k+1}: =\bfx_c(\lambda^{k},\mu^k)=\argmin_{x\in {\mathds B}_\varepsilon(x^*)}\cL_c(x,\lambda^{k},\mu^k)$;
\item [\bf 2.] choose a linear operator $V^k\in{\mathds V}_c(\lambda^k,\mu^k)$ and solve the linear system
$\nabla\vartheta_c(\lambda^k,\mu^k)+V^k\Delta y=0$ to obtain
$\Delta y^{k+1}\in\Re^m\times\cZ$, where $\nabla\vartheta_c$ is given by \eqref{nablavartheta} and ${\mathds V}_c(\lambda^k,\mu^k)$ is given by \eqref{v};
\item [\bf 3.] compute
$
(\lambda^{k+1},\mu^{k+1}):=(\lambda^k,\mu^k)+\Delta y^{k+1}.
$
\end{itemize}
\end{algorithm}

\begin{remark}
{\rm (a)} Algorithm \ref{algo-2nd} is essentially an application of a nonsmooth Newton method for (locally) maximizing the function $\vartheta_{c}$, or equivalently, for solving the nonlinear equation $\nabla\vartheta_{c}(\lambda,\mu)=0$.
In the second step of Algorithm \ref{algo-2nd}, the linear operator $W\in\partial_{B}\Pi_{K}(\mu^{k}-cg(x^{k+1}))$ for computing $V^k$ can be explicitly obtained for the NLP, NLSOCP and NLSDP problems.
Consequently, one can explicitly compute $\cA_{c}(\lambda^{k},\mu^k,W^{k})$ via \eqref{a2} and the linear operator $V^{k}$ can be calculated via \eqref{v}.

{\rm (b)}
Generally, it is hard to check if ${\mathds V}_{c}(\lambda^{k},\mu^k)\subseteq\partial_{C}(\nabla\vartheta_c)(\lambda^k,\mu^k)$.
Consequently, Algorithm \ref{algo-2nd} could \emph{not} be attributed as the classic semismooth Newton method introduced in Section \ref{ssn}.
We should mention that, it is still uncertain if one can explicitly find a linear operator $V^{k}$ such that $V^{k}\in\partial_{C}(\nabla\vartheta_c)(\lambda^k,\mu^k)$ or $V^{k}\in\partial_{B}(\nabla\vartheta_c)(\lambda^k,\mu^k)$.
However, as will be shown later, $V^{k}\in{\mathds V}_{c}(\lambda^k,\mu^k)$ is sufficient for our purpose.

\end{remark}

To analyze the convergence properties of Algorithm \ref{algo-2nd}, we make the following abstract assumption as a supplementary to Assumption \ref{assg}.
\begin{assumption}
\label{ass:pd}
For any $c\ge\bar c$, any linear operator $V\in{\mathds V}_c(\lambda^*,\mu^*)$ satisfies $V\prec 0$.
\end{assumption}

The following result is essential to the convergence analysis of Algorithm \ref{algo-2nd}. The proof of this theorem is given in Appendix \ref{ap3}.

\begin{theorem}
\label{th31}
Suppose that Assumptions \ref{assg} and \ref{ass:pd} hold, and $c\ge \bar c$. Let $\{(\lambda^{k},\mu^{k})\}$ be an arbitrary sequence that converging to $(\lambda^*,\mu^*)$. Then, for all the sufficiently large $k$, the set
${\mathds V}_c(\lambda^{k},\mu^{k})$ is nonempty and compact, and it holds that
\[
\label{gt}
\begin{array}{r}
\nabla\vartheta_c(\lambda^{k},\mu^{k})-\nabla\vartheta_c(\lambda^*,\mu^*)
-V_{k}(\lambda^{k}-\lambda^*,\mu^{k}-\mu^*)
=o\left(\|(\lambda^{k}-\lambda^*,\mu^{k}-\mu^*)\|\right),\
\\[2mm]
 \forall\,
V^{k}\in{\mathds V}_c(\lambda^{k},\mu^k).
\end{array}
\]
\end{theorem}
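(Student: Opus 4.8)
The plan is to establish the semismoothness-type estimate \eqref{gt} for the composite function $\nabla\vartheta_c$ at $(\lambda^*,\mu^*)$ by exploiting the structure of $\nabla\vartheta_c$ given in \eqref{nablavartheta}, the semismoothness of $\bfx_c(\cdot)$ (Proposition \ref{summary}(a)) together with that of $\Pi_K(\cdot)$, and the inclusion $\partial_B(\nabla\vartheta_c)(\lambda,\mu)\subseteq {\mathds V}_c(\lambda,\mu)$ from Proposition \ref{ppvy}. First I would invoke Proposition \ref{ppvy} to fix $\delta_1>0$ so that ${\mathds V}_c$ is well-defined, compact-valued and outer semicontinuous on ${\mathds B}_{\delta_1}(\lambda^*,\mu^*)$; since $(\lambda^k,\mu^k)\to(\lambda^*,\mu^*)$, for all sufficiently large $k$ the point lies in this ball and ${\mathds V}_c(\lambda^k,\mu^k)$ is nonempty and compact, which disposes of the first assertion.

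For the estimate \eqref{gt}, the key point is that every $V^k\in{\mathds V}_c(\lambda^k,\mu^k)$ is built from a single $W^k\in\partial_B\Pi_K(\mu^k-cg(\bfx_c(\lambda^k,\mu^k)))$ via the explicit formula \eqref{v}, and that this same $W^k$ is exactly the Jacobian surrogate one would use when differentiating the two blocks of $\nabla\vartheta_c$ in \eqref{nablavartheta} through $\bfx_c$. Concretely, I would write $y^k:=(\lambda^k,\mu^k)$, $y^*:=(\lambda^*,\mu^*)$, and analyze the two components of $\nabla\vartheta_c(y^k)-\nabla\vartheta_c(y^*)$ separately. For the first block, $-h(\bfx_c(y^k))+h(\bfx_c(y^*))$, I expand using the differentiability of $h$ along $\bfx_c(y^k)-\bfx_c(y^*)$ and then the semismoothness of $\bfx_c$ at $y^*$ to replace $\bfx_c(y^k)-\bfx_c(y^*)$ by an element of (a surrogate for) $\cJ\bfx_c(y^k)(y^k-y^*)$ up to $o(\|y^k-y^*\|)$; the resulting linear term must be matched with the corresponding row of \eqref{v}. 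The decisive ingredient here is an explicit formula for the generalized Jacobian of $\bfx_c$ obtained by applying the implicit function theorem to the optimality condition $\nabla_x\cL_c(\bfx_c(y),y)=0$: differentiating through $\Pi_K$ produces precisely the operator $(\cA_c(y^k,W^k))^{-1}(\nabla h,\nabla g W^k)$ appearing in \eqref{v}, where $\cA_c$ is invertible near $y^*$ by Proposition \ref{summary}(b) combined with Assumption \ref{assg}(ii) and \eqref{alimit}. For the second block, $\frac1c\Pi_K(\mu^k-cg(\bfx_c(y^k)))-\frac{\mu^k}{c}$ minus its value at $y^*$, I additionally use the strong semismoothness of $\Pi_K$ (the blanket assumption of Section \ref{sec3}) to handle the outer projection, which yields the term $-\tfrac1c(\cI_\cZ-W^k)$ in the $\mu\mu$-corner of \eqref{v} plus the chain-rule contribution through $\bfx_c$.

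The main obstacle I anticipate is bookkeeping the composition of two semismoothness estimates — the inner one for $\bfx_c$ and the outer one for $\Pi_K$ — and verifying that the particular $W^k$ selected inside ${\mathds V}_c(y^k)$ is \emph{simultaneously} a legitimate surrogate Jacobian for $\Pi_K$ in \emph{both} places it occurs (inside $\cA_c$ via the optimality system and in the outer projection of the $\mu$-block). This is exactly the subtlety flagged in the remark after Algorithm \ref{algo-2nd}: since we do not know ${\mathds V}_c(y^k)\subseteq\partial_C(\nabla\vartheta_c)(y^k)$, we cannot simply quote a chain rule for Clarke Jacobians. Instead I would follow the route of \cite{chenlalm}: show directly that for the specific construction \eqref{v}, the difference $\nabla\vartheta_c(y^k)-\nabla\vartheta_c(y^*)-V^k(y^k-y^*)$ telescopes into a sum of terms each of which is $o(\|y^k-y^*\|)$, using (i) semismoothness of $\bfx_c$ at $y^*$, (ii) strong semismoothness of $\Pi_K$, (iii) uniform boundedness of $(\cA_c(y^k,W^k))^{-1}$ near $y^*$ (hence Lipschitz continuity of the associated linear maps), and (iv) Lipschitz continuity of $\bfx_c$ from Proposition \ref{summary}. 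Assumption \ref{ass:pd} is not needed for \eqref{gt} itself — it will be used in the subsequent convergence argument to guarantee nonsingularity of $V^k$ via outer semicontinuity of ${\mathds V}_c$ — so I would only remark that it is kept in the hypotheses for uniformity with the rest of the development.
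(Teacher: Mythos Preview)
Your proposal is correct and follows essentially the same route as the paper's proof in Appendix~\ref{ap3}: split $\nabla\vartheta_c(y^k)-\nabla\vartheta_c(y^*)-V^k(y^k-y^*)$ into its $h$-block and its $\Pi_K$-block, obtain a surrogate Jacobian for $\bfx_c$ from the optimality equation $\nabla_x\cL_c(\bfx_c(y),y)=0$, and then show each residual is $o(\|y^k-y^*\|)$ using semismoothness of $\Pi_K$ and boundedness of $(\cA_c)^{-1}$. The one presentational difference is that the paper does not invoke semismoothness of $\bfx_c$ abstractly; instead it writes the \emph{directional derivative} $\bfx_c'(y^k;y^k-y^*)$ explicitly via \eqref{n2}--\eqref{xcpy}, which pins down a particular $\widehat W^k\in\partial_B\Pi_K(\mu^k-cg(x^{k+1}))$, and then handles exactly the obstacle you flag by showing $(\widehat W^k-W^k)(u^k-u^*)=o(\|u^k-u^*\|)$ for the relevant increment, using the characterization of semismoothness in Lemma~\ref{lemma:semi} (any two elements of $\partial_B\Pi_K$ at $u^k$ agree on $u^k-u^*$ up to $o$). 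Your observation that Assumption~\ref{ass:pd} is not actually used in deriving \eqref{gt} is also correct; the paper carries it only to set up the constant $\delta_2$ for later use.
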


Now we establish the local convergence properties of Algorithm 1.
\begin{theorem}
\label{convergence}
Suppose that Assumptions \ref{assg} and \ref{ass:pd} hold.
If $c>0$ is sufficiently large, then there exists a constant $\delta>0$ such that, if
$(\lambda^0,\mu^0)\in{\mathds B_{\delta}}(\lambda^*,\mu^*)$, the whole sequence $\{(\lambda^{k},\mu^k)\}$ generated by Algorithm \ref{algo-2nd} is well-defined and converges to $(\lambda^*,\mu^*)$ with a superlinear convergence rate.
\end{theorem}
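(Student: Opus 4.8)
The plan is to recognize Algorithm \ref{algo-2nd} as an instance of the generalized Newton iteration analyzed in Proposition \ref{specialnewton}, applied to the nonlinear equation $F(\lambda,\mu) := \nabla\vartheta_c(\lambda,\mu) = 0$ on the space $\cU := \Re^m\times\cZ$, and then to verify the three hypotheses of that proposition at the point $u^* := (\lambda^*,\mu^*)$. First I would observe that, since Assumptions \ref{assg} and \ref{ass:pd} hold and $c\ge\bar c$, Proposition \ref{summary} and Lemma \ref{lemma:nal} apply, so $\vartheta_c$ is continuously differentiable with semismooth gradient on ${\mathds B}_{\delta_0}(\lambda^*,\mu^*)$, and by Lemma \ref{lemma:nal} the map $F = \nabla\vartheta_c$ is semismooth at $u^*$; this disposes of the semismoothness requirement of Proposition \ref{specialnewton}. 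Next, I would invoke Proposition \ref{ppvy} to get a radius $\delta_1\le\delta_0$ on which the surrogate map ${\mathds V}_c$ plays the role of $T_F$: it is nonempty, compact-valued (Assumption \ref{asstf}(a)) and outer semicontinuous (Assumption \ref{asstf}(b)). Note that Assumption \ref{asstf}(c) is \emph{not} available here — this is exactly why we use Proposition \ref{specialnewton} rather than Theorem \ref{newton} — but the proposition only needs the weaker condition \eqref{smallo} in place of \ref{asstf}(c), and Theorem \ref{th31} supplies precisely this: for any sequence $(\lambda^k,\mu^k)\to(\lambda^*,\mu^*)$ and any $V^k\in{\mathds V}_c(\lambda^k,\mu^k)$, the displayed estimate \eqref{gt} is exactly \eqref{smallo} for $F=\nabla\vartheta_c$.

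It then remains to check nonsingularity of $T_F(u^*) = {\mathds V}_c(\lambda^*,\mu^*)$. This is immediate from Assumption \ref{ass:pd}: every $V\in{\mathds V}_c(\lambda^*,\mu^*)$ satisfies $V\prec 0$, hence is negative definite and in particular invertible, so ${\mathds V}_c(\lambda^*,\mu^*)$ is nonsingular in the sense required. With semismoothness of $F$ at $u^*$, validity of \eqref{smallo}, nonsingularity of $T_F(u^*)$, and Assumptions \ref{asstf}(a)–(b) for $T_F={\mathds V}_c$ all in hand, Proposition \ref{specialnewton} yields a radius $\varepsilon'>0$ such that the generalized Newton iteration — which is exactly steps 2–3 of Algorithm \ref{algo-2nd}, since $\Delta y^{k+1}$ solves $\nabla\vartheta_c(\lambda^k,\mu^k)+V^k\Delta y = 0$ and $(\lambda^{k+1},\mu^{k+1}) = (\lambda^k,\mu^k)+\Delta y^{k+1}$ — is well-defined and converges $Q$-superlinearly to $(\lambda^*,\mu^*)$ from any start in ${\mathds B}_{\varepsilon'}(\lambda^*,\mu^*)$. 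Finally I would set $\delta := \min\{\varepsilon',\delta_1\}$, so that all the invocations above are legitimate along the whole trajectory; one checks inductively that the iterates stay in ${\mathds B}_{\delta_1}(\lambda^*,\mu^*)$ (so step 1 is well-posed via $\bfx_c$ and step 2 via ${\mathds V}_c$), which is automatic because superlinear convergence forces $\|(\lambda^{k+1},\mu^{k+1})-(\lambda^*,\mu^*)\| \le \|(\lambda^k,\mu^k)-(\lambda^*,\mu^*)\|$ for $k$ large and, after possibly shrinking $\delta$, for all $k$.

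The one point needing care — and the main obstacle — is the bookkeeping that makes this a \emph{single} statement holding for "$c$ sufficiently large": Proposition \ref{summary}, Proposition \ref{ppvy}, Theorem \ref{th31} and the resulting Newton radius all produce constants ($\varepsilon$, $\delta_0$, $\delta_1$, $\varepsilon'$) that depend on $c$, so one must fix $c\ge\bar c$ first, then extract all constants for that $c$, and only then define $\delta$; the phrase "$c>0$ sufficiently large" in the theorem is really "$c\ge\bar c$", with $\bar c$ furnished by Assumption \ref{assg}(ii). A secondary subtlety is that Algorithm \ref{algo-2nd} chooses $V^k\in{\mathds V}_c(\lambda^k,\mu^k)$ arbitrarily, so the convergence claim must be uniform over this selection — but this is exactly the form in which both Theorem \ref{th31} (the "$\forall V^k$" quantifier in \eqref{gt}) and Proposition \ref{specialnewton} are stated, so no extra work is needed beyond citing them correctly.
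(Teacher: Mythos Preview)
Your proposal is correct and follows essentially the same approach as the paper's proof: invoke Lemma \ref{lemma:nal} for semismoothness of $\nabla\vartheta_c$, Proposition \ref{ppvy} for the required properties of ${\mathds V}_c$, Assumption \ref{ass:pd} for nonsingularity at $(\lambda^*,\mu^*)$, and Theorem \ref{th31} for the estimate \eqref{smallo}, then conclude via Proposition \ref{specialnewton}. The paper's proof is a terse three-sentence citation of exactly these ingredients, whereas you have spelled out the verification of each hypothesis and the bookkeeping on the $c$-dependent constants more explicitly.
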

\begin{proof}
According to Lemma \ref{lemma:nal}, the function $\nabla\vartheta_c$ defined by \eqref{nablavartheta} is semismooth. Meanwhile, the mapping ${\mathds V}_c$ defined in \eqref{v} is locally nonempty, compact-valued and outer semicontinuous in a neighborhood of $(\lambda^*,\mu^*)$.
Then, by Proposition \ref{summary} and Proposition \ref{ppvy} we know that Algorithm \ref{algo-2nd} is well-defined.
Consequently, the theorem follows from Theorem \ref{th31} and Proposition \ref{specialnewton}.
This completes the proof.
\end{proof}

We make the following remark on the abstract assumptions used in Theorem \ref{convergence}.
\begin{remark}
Theorem \ref{convergence} has established the local convergence, as well as the superlinear convergence rate, of the second-order method of multipliers (Algorithm \ref{algo-2nd}).
However, at the first glance, both Assumptions \ref{assg} and \ref{ass:pd} are too abstract to be appreciated.
Therefore, it is of great importance to transfer them to some concrete conditions.
In the following three sections, we show separately that, for the three typical scenarios, i.e., the NLP, the NLSOCP, and the NLSDP, Assumptions \ref{assg} and \ref{ass:pd} are the consequence of a certain constraint qualification, together with a second-order sufficient condition, and the corresponding analysis constitutes the main contribution of this paper.
As can be observed later, these concrete conditions are exactly the prerequisites that were frequently used for establishing the local Q-linear convergence rate of the method of multipliers without requiring strict complementarity.
\end{remark}

Before finishing this section, we would like to make the following remark on Algorithm \ref{algo-2nd}.
\begin{remark}

In the conventional problem settings of NLP, NLSOCP and NLSDP, even the classic augmented Lagrangian method is not necessarily convergent globally.
Moreover, Algorithm \ref{algo-2nd} is based on a specially designed generalized Newton method by regarding the update of multipliers in the augmented Lagrangian method as a first-order step.
As the second-order approaches often require stronger conditions to guarantee even the local convergence property, it is not necessarily globally convergent in general if the first-order counterpart is not.
Consequently, the best possible result for the multiplier sequence generated by Algorithm \ref{algo-2nd} is a local convergence.
Moreover, as the convergence rate study relies on a certain local error bound (as it is not realistic in general to expect a global error bound), the corresponding convergence rate results are also local ones.

\end{remark}
\color{black}

\section{The NLP case}
\label{sec4}
Consider the NLP problem
\begin{equation}
\min_x f(x)\quad\mbox{s.t.}\quad h(x)=0,\quad g(x)\ge 0,
\label{p}
\end{equation}
where
$f:\Re^n\to\Re$, $h:\Re^n\to \Re^{m}$ and $g:\Re^n\to\Re^{p}$ are three twice continuously differentiable functions.
Problem \eqref{p} is an instance of problem \eqref{op} with $\cX\equiv\Re^n$, $\cZ\equiv\Re^{p}$ and $K\equiv\Re_+^{p}:=\{\mu\in\Re^{p}\mid \mu_j\ge 0, j=1,2,\ldots, p\}$.
For convenience,
we define the two index sets ${\mathds I}:=\{1,\ldots,m\}$ and ${\mathds J}:=\{1,\ldots,p\}$.
The following two definitions are well-known \cite{numopt} for NLP problem \eqref{p}.
\begin{definition}[LICQ]
\label{deflicq}
Given a point $x\in\Re^n$ such that $h(x)=0$ and $g(x)\ge 0$.
We say that the linear
independence constraint qualification (LICQ) holds at $x$ if the set of active constraint gradients
$$
\{\nabla h_i(x), \nabla g_j(x)\mid \, i\in{\mathds I},\, j\in{\mathds J}, g_j(x)=0\}
$$
is linearly independent.
\end{definition}

Let $(x^*,\lambda^*,\mu^*)$ be a solution to the KKT system (\ref{kkt_2nd}).
The critical cone at $x^*$ for $(\lambda^*,\mu^*)$
is defined by
$$
\begin{array}{ll}
\cC(x^*,(\lambda^*,\mu^*))
\\[2mm]
\quad
:=\left\{d\in\Re^m \Bigg|
\begin{array}{l}
\langle \nabla h_i(x^*),d\rangle=0,\ \forall i \in{\mathds I},
\\[.5mm]
\langle \nabla g_j(x^*),d\rangle=0,\ \forall j \in{\mathds J}\mbox{ such that } g_j(x^*)=0 \mbox{ and } \mu^*_{j}>0,
\\[.5mm]
\langle \nabla g_j(x^*),d\rangle\ge 0,\ \forall j \in{\mathds J}\mbox{ such that } g_j(x^*)=0 \mbox{ and } \mu^*_{j}=0
\end{array}
\right\}.
\end{array}
$$
\begin{definition}[Second-order sufficient condition for NLP]
Let $x^*$ be a stationary point of problem \eqref{p},
We say the second-order sufficient condition holds at $x^*$ if there exists $(\lambda^*,\mu^*)\in\cM(x^*)$ such that
$$
\langle d,\nabla_{xx}^2\cL_{0}(x^*,\lambda^*,\mu^*)d\rangle >0,\ \forall d\in \cC(x^*, (\lambda^*,\mu^*))\setminus\{0\}.
$$
\end{definition}

The following definition was introduced in Robinson \cite{rob80}.
\begin{definition}[Strong second-order sufficient condition for NLP]
\label{defssoscnlp}
Let $x^*$ be a stationary point of problem \eqref{p},
We say the strong second-order sufficient condition holds at $x^*$ if there exists $(\lambda^*,\mu^*)\in\cM(x^*)$ such that
$$\langle d, \nabla_{xx}^2\cL_{0}(x^*,\lambda^*,\mu^*)d\rangle>0,\ \forall d\in \aff\cC(x^*, (\lambda^*,\mu^*))\setminus\{0\},$$
where $\aff\cC(x^*, (\lambda^*,\mu^*))$ is the affine hull of $\cC(x^*,(\lambda^*,\mu^*))$.
\end{definition}
The following proposition is the main result of Chen and Liao \cite{chenlalm}.
\begin{proposition}
Suppose that both the LICQ and the strong second-order sufficient condition hold at the given stationary point $x^*$ of the NLP problem \eqref{p}.
Then, Assumptions \ref{assg} and \ref{ass:pd} hold at $x^*$ for problem \eqref{p}.
\end{proposition}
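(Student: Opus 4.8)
**The plan is to verify Assumption \ref{assg} and Assumption \ref{ass:pd} directly from the polyhedral structure of $K=\Re^p_+$**, exploiting the fact that projections onto the nonnegative orthant are separable and that their Bouligand subdifferentials are diagonal $0/1$ matrices whose patterns are controlled by the active set. First I would fix a stationary point $x^*$ with multiplier $(\lambda^*,\mu^*)$ and partition the index set ${\mathds J}$ into $\mathds{J}_+ := \{j : g_j(x^*)=0,\ \mu^*_j>0\}$, $\mathds{J}_0 := \{j : g_j(x^*)=0,\ \mu^*_j=0\}$ (the biactive/degenerate set, where strict complementarity fails), and $\mathds{J}_- := \{j : g_j(x^*)>0,\ \mu^*_j=0\}$. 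Since LICQ holds, the Lagrange multiplier is unique, which is precisely Assumption \ref{assg}(i). For a fixed large $c$, the argument of the projection is $\mu^*-cg(x^*)$, whose $j$-th component is $\mu^*_j$ on $\mathds{J}_+$ (positive), $0$ on $\mathds{J}_0$, and $-cg_j(x^*)$ on $\mathds{J}_-$ (strictly negative for $c$ large); hence any $W\in\partial_B\Pi_{\Re^p_+}(\mu^*-cg(x^*))$ is the diagonal $0/1$ matrix with $1$ on $\mathds{J}_+$, $0$ on $\mathds{J}_-$, and arbitrary $0/1$ entries on $\mathds{J}_0$.

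Next I would analyze the quadratic form in Assumption \ref{assg}(ii). With $W$ as above, $\cA_{(c,\lambda^*,\mu^*)}(W)$ becomes $\nabla^2_{xx}\cL_0(x^*,\lambda^*,\mu^*) + c\,\nabla h(x^*)\cJ h(x^*) + c\sum_{j\in\mathds{J}_+\cup S}\nabla g_j(x^*)\nabla g_j(x^*)^\top$ for some $S\subseteq\mathds{J}_0$. The claim is that for $c$ large this is uniformly positive definite, uniformly over all such $S$. This is the classical Debreu-type lemma: if $M$ is symmetric, $B$ has full row rank, and $\langle d,Md\rangle>0$ for all nonzero $d\in\ker B$, then $M + cB^\top B\succ 0$ for all large $c$. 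Here one takes $B$ to be the matrix with rows $\nabla h_i(x^*)$ and $\nabla g_j(x^*)$ for $j\in\mathds{J}_+$ (so $\ker B$ contains $\aff\cC(x^*,(\lambda^*,\mu^*))$ — note the affine hull of the critical cone is exactly the subspace cutting out the $\mathds{J}_+$ constraints), on which the strong second-order sufficient condition gives positive definiteness; adding the extra terms $c\sum_{j\in S}\nabla g_j\nabla g_j^\top \succeq 0$ only helps, and the uniformity over the finitely many choices of $S$ gives a single $\bar c$ and a single $\eta$. This yields Assumption \ref{assg}(ii).

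For Assumption \ref{ass:pd}, I would unwind the definition \eqref{v} of ${\mathds V}_c(\lambda^*,\mu^*)$: using \eqref{alimit}, each $V\in{\mathds V}_c(\lambda^*,\mu^*)$ is built from $\cA_{(c,\lambda^*,\mu^*)}(W)^{-1}$ (which exists and is positive definite by the previous paragraph) sandwiched by the constraint Jacobians, plus the block $-\tfrac1c\mathrm{diag}(0,\cI_\cZ-W)$ which is negative semidefinite since $\cI_\cZ-W$ is a $0/1$ diagonal matrix. A Schur-complement / congruence computation shows $V$ is exactly (minus) the Schur complement structure one gets from inverting the KKT-type matrix $\begin{pmatrix}\cA & \nabla\tilde B^\top\\ \tilde B & \ast\end{pmatrix}$, and LICQ (full row rank of the active Jacobian) together with $\cA\succ 0$ forces $V\prec 0$; the degenerate block contributes a further $\preceq 0$ term that cannot destroy definiteness. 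The cleanest route is to simply invoke that this is exactly what Chen and Liao \cite{chenlalm} establish — the proposition is stated as being their main result — so the proof reduces to matching notation: observe that \eqref{op} with $K=\Re^p_+$ is the NLP \eqref{p}, that $\Pi_{\Re^p_+}$ is strongly semismooth, and that LICQ plus the strong SOSC are precisely the hypotheses under which \cite{chenlalm} verify both abstract assumptions.

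The main obstacle is the uniformity over the degenerate set $\mathds{J}_0$: one must check that the positive-definiteness constant $\eta$ and the threshold $\bar c$ in Assumption \ref{assg}(ii), and the strict negativity in Assumption \ref{ass:pd}, can be chosen independently of which $W\in\partial_B\Pi_{\Re^p_+}(\mu^*-cg(x^*))$ is selected — i.e.\ independently of $S\subseteq\mathds{J}_0$. Since $\partial_B\Pi_{\Re^p_+}$ at this point is a \emph{finite} set of matrices, this is a matter of taking the worst case over finitely many alternatives, but it is the step where the failure of strict complementarity genuinely enters and must be handled with care; it is also the reason the strong second-order sufficient condition (positive definiteness on the affine hull of the critical cone, rather than merely on the critical cone) is needed rather than the plain second-order condition.
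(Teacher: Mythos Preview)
Your proposal is correct. The paper does not actually prove this proposition; it simply attributes it to Chen and Liao \cite{chenlalm} as their main result, which you also acknowledge. Your direct sketch goes further than the paper does and is sound: the key observations---that $\partial_B\Pi_{\Re^p_+}(\mu^*-cg(x^*))$ consists of finitely many diagonal $0/1$ matrices indexed by subsets $S\subseteq\mathds{J}_0$, that under LICQ the affine hull of the critical cone equals the null space of the equality-plus-$\mathds{J}_+$ Jacobian so the Debreu/Finsler lemma applies, and that each $V\in{\mathds V}_c(\lambda^*,\mu^*)$ block-decouples into $-B\cA^{-1}B^\top$ on the $(\lambda,\mu_{\mathds{J}_+\cup S})$ coordinates (negative definite since $B$ has full row rank by LICQ) and $-\tfrac{1}{c}I$ on the remaining $\mu$-coordinates---are exactly the polyhedral simplifications that make the NLP case tractable, and your handling of the uniformity over the finite set of $W$'s is the right way to absorb the lack of strict complementarity.
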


\section{The NLSOCP case}
\label{sec5}

In this section, we consider the following NLSOCP problem
\begin{equation}
\min f(x)\quad\mbox{s.t.}\quad h(x)=0,\quad g(x)\in K,
\label{socp}
\end{equation}
where $f: \Re^n\to\Re$, $h: \Re^n\to \Re^{m}$, and $g: \Re^n\to\Re^p$ are three twice continuously differentiable functions, $K:=\cQ_1\times\cdots\times\cQ_s$ with each $\cQ_j$, $j=1,\ldots,s$ being a second-order cone in $\Re^{r_j+1}$, i.e.,
$$
\cQ_j:=\{
\mu_j=(\bar\mu_j, \dot{\mu}_j)\in\Re^{r_j}\times\Re\mid \dot{\mu}_j\ge\|\bar{\mu}_j\|
\}.
$$
Note that $p=\sum_{j=1}^s(r_j+1)$.
Problem \eqref{socp} is an instance of problem \eqref{op} with $\cX\equiv\Re^n$ and $\cZ\equiv\Re^{p}$.
 Moreover, for convenience, we can partition the function $g$ by $g(\cdot)=(g_1(\cdot),\ldots,g_s(\cdot))$ with each $g_j(\cdot)=(\bar{g}_j(\cdot),\dot{g}_j(\cdot))$ such that $\dot{g}_j$ is real-valued.
In the rest part of this section, we first introduce some preliminary results related to NLSOCP problems. After that, we present the main results of this section, and discuss a special case.

\subsection{Preliminaries on NLSOCP}
For each $j=1,\dots,s$, we use $\int\cQ_j$ and $\bdry\cQ_j$ to denote the interior and the boundary of the second-order cone $\cQ_j$, respectively.
According to \cite[Proposition 3.3]{fuk01}, the projection operator to the cone $K$ takes the form of $\Pi_K(\mu)=(\nu_1,\ldots,\nu_s)$, $\forall\mu=(\mu_1,\ldots,\mu_s)\in\Re^{r_1+1}\times\cdots\times\Re^{r_s+1}$ with
$$
\nu_j=\left\{
\begin{array}{ll}
\displaystyle \frac{1}{2}\left(1+\frac{\dot{\mu}_j}{\|\bar{\mu}_j\|}\right)(\bar{\mu}_j,\|\bar{\mu}_j\|), &\mbox{if }\ |\dot{\mu}_j| < \|\bar{\mu}_j\|,
\\[2mm]
\mu_j, &\mbox{if }\ \dot{\mu}_j\ge\|\bar{\mu}_j\|,
\\[2mm]
0, &\mbox{if }\ \dot{\mu}_j\le-\|\bar{\mu}_j\|, \\
\end{array}
\right.
$$
where
$
\mu_j=(\bar{\mu}_j,\dot{\mu}_j)\in\Re^{r_j}\times\Re, \forall j=1,\ldots,s.
$

It has been established in Chen et al. \cite[Proposition 4.3]{chenxd} (also in Hayashi et al. \cite[Proposition 4.5]{haya05}) that the projection operator $\Pi_K(\cdot)$ is strongly semismooth.
Moreover, the Bouligand-subdifferential of this projection operator has been discussed in Pang et al. \cite[Lemma 14]{pangss}, Chen et al. \cite[Lemma 4]{chenjs} and Outrata and Sun \cite[Lemma 1]{sunsoc}.
The following result directly follows \cite[Lemmas 2.5 \& 2.6]{kanzow09}.

\begin{lemma}
\label{parbpik}
For any given point $\mu=(\mu_1,\ldots,\mu_s)\in\Re^{r_1+1}\times\cdots\times\Re^{r_s+1}$, each element $W\in\partial_B\Pi_K(\mu)$ is a block diagonal matrix $W=\diag(W_1,\ldots,W_s)$, with each $W_j\in\Re^{(r_j+1)\times(r_j+1)}$ taking one of the following representations:
\begin{itemize}
\item[\rm (a)]
$
W_j =
\left\{
\begin{array}{ll}
\ds
\frac{1}{2}\begin{pmatrix}
\ds
\left(1+\frac{\dot{\mu}_j}{\|\bar{\mu}_j\|}\right) I_{r_j}
-\frac{\dot{\mu}_j}{\|\bar{ \mu}_j\|}\frac{\bar{\mu}_j\bar{\mu}_j^T}{\|\bar{\mu}_j\|^2}
&\ds\frac{\bar{\mu}_j}{\|\bar{\mu}_j\|}\\[4mm]
\ds\frac{\bar{\mu}_j^T}{\|\bar{\mu}_j\|}&1
\end{pmatrix},
 & \mbox{ if } \ds |\dot{\mu}_j|<\|\bar{\mu}_j\|,\\[2mm]
 \ds
\ds I_{r_j+1}, & \mbox{ if }\ds \dot{\mu}_j> \|\bar{\mu}_j\|,\\[2mm]
\ds0, & \mbox{ if } \ds \dot{\mu}_j< -\|\bar{\mu}_j\|;
\end{array}
\right.
$
\item[\rm (b)]
$
W_j\in\left\{I_{r_j+1},\
I_{r_j+1}+
\frac{1}{2}
\begin{pmatrix}
\ds
-\frac{\bar{\mu}_j\bar{\mu}_j^T}{\|\bar{\mu}_j\|^2}
&\ds\frac{\bar{\mu}_j}{\|\bar{\mu}_j\|}\\[3mm]
\ds\frac{\bar{\mu}_j^T}{\|\bar{\mu}_j\|}&
\ds-1
\end{pmatrix}
\right\},
\
\mbox{ if } \mu_j\neq 0 \mbox{ and } \dot{\mu}_j=\|\bar{\mu}_j\|;
$
\item [\rm (c)]
$
W_j\in\left\{0,\
\frac{1}{2}
\begin{pmatrix}
\ds
\frac{\bar{\mu}_j\bar{\mu}_j^T}{\|\bar{\mu}_j\|^2}
&
\ds\frac{\bar{\mu}_j}{\|\bar{\mu}_j\|}\\[4mm]
\ds
\frac{\bar{\mu}_j^T}{\|\bar{\mu}_j\|}&
\ds 1
\end{pmatrix}
\right\},\
\mbox{ if } \mu_j\neq 0 \mbox{ and } \dot{\mu}_j=-\|\bar{\mu}_j\|;
$

\item [\rm (d)]
$
W_j\in
\left\{
\frac{1}{2}
\begin{pmatrix}
\ds
2\tau I_{r_j}+(1-2\tau)\bar\nu\bar\nu^T &\ds \bar\nu\\[2mm]
\ds \bar\nu^T&\ds 1
\end{pmatrix}
\Bigl\vert\
\bar\nu\in\Re^{r_j},\ \|\bar\nu\|=1,\ \tau\in[0,1]
\right\} \cup \{0\} \cup\{I_{r_j+1}\},\
\mbox{ if } \mu_j=0.
$
\end{itemize}
\end{lemma}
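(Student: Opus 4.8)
The plan is to reduce the statement to a single second-order cone and then dispose of the four location cases by an elementary computation. First I would exploit the separable structure $K=\cQ_1\times\cdots\times\cQ_s$: since the metric projection onto a Cartesian product splits coordinatewise, $\Pi_K(\mu)=\big(\Pi_{\cQ_1}(\mu_1),\ldots,\Pi_{\cQ_s}(\mu_s)\big)$, a point $\mu$ is a point of Fr\'echet-differentiability of $\Pi_K$ if and only if each $\mu_j$ is one of $\Pi_{\cQ_j}$, and in that case $\cJ\Pi_K(\mu)=\diag\big(\cJ\Pi_{\cQ_1}(\mu_1),\ldots,\cJ\Pi_{\cQ_s}(\mu_s)\big)$. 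Passing to limits along sequences of such points, every $W\in\partial_B\Pi_K(\mu)$ is therefore block diagonal, $W=\diag(W_1,\ldots,W_s)$ with $W_j\in\partial_B\Pi_{\cQ_j}(\mu_j)$ for every $j$. It thus suffices to describe $\partial_B\Pi_\cQ(z)$ for a single second-order cone $\cQ=\{(\bar z,\dot z)\in\Re^{r}\times\Re\mid\dot z\ge\|\bar z\|\}$ and an arbitrary $z=(\bar z,\dot z)$, the alternatives (a)--(d) corresponding to the position of $z$ relative to $\cQ$ and $-\cQ$.

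Next I would treat the three ``smooth'' regions. On $\int\cQ$ the projection is the identity, on $\int(-\cQ)$ it is $0$, and on $\{\,|\dot z|<\|\bar z\|\,\}$ --- where necessarily $\bar z\neq 0$ --- the closed-form expression $\tfrac12(1+\dot z/\|\bar z\|)(\bar z,\|\bar z\|)$ recalled above is of class $C^\infty$; differentiating it directly, with $\tfrac{\partial}{\partial\bar z}(\bar z/\|\bar z\|)=\|\bar z\|^{-1}(I_r-\hat z\hat z^T)$ and $\hat z:=\bar z/\|\bar z\|$, produces the single Jacobian $\tfrac12\bigl(\begin{smallmatrix}I_r+\frac{\dot z}{\|\bar z\|}(I_r-\hat z\hat z^T)&\hat z\\ \hat z^T&1\end{smallmatrix}\bigr)$, which coincides with the first alternative in (a) after the identity $I_r+\frac{\dot z}{\|\bar z\|}(I_r-\hat z\hat z^T)=(1+\tfrac{\dot z}{\|\bar z\|})I_r-\tfrac{\dot z}{\|\bar z\|}\hat z\hat z^T$. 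These three open sets exhaust $\Re^{r+1}$ except for the measure-zero set $\bdry\cQ\cup\bdry(-\cQ)$, and $\Pi_\cQ$ is differentiable on each of them, so $\partial_B\Pi_\cQ$ at any of their points is the singleton given by the corresponding Jacobian; this settles (a).

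I would then handle the two boundary rays. For $z$ with $\dot z=\|\bar z\|\neq 0$, a small ball around $z$ meets only $\int\cQ$ and $\{\,|\dot z|<\|\bar z\|\,\}$ (the set $-\cQ$ being bounded away from $z$ since $\dot z>0$), so the only limits of Jacobians of $\Pi_\cQ$ along differentiability points tending to $z$ are $I_{r+1}$ and the limit of the Jacobian above as $\dot z/\|\bar z\|\to 1$, namely $\tfrac12\bigl(\begin{smallmatrix}2I_r-\hat z\hat z^T&\hat z\\ \hat z^T&1\end{smallmatrix}\bigr)$; this is precisely (b). The case $\dot z=-\|\bar z\|\neq 0$ I would reduce to the previous one through the Moreau identity $\Pi_\cQ(z)=z+\Pi_\cQ(-z)$, valid because $\cQ$ is self-dual, which yields $\partial_B\Pi_\cQ(z)=\{\,I_{r+1}-V\mid V\in\partial_B\Pi_\cQ(-z)\,\}$ and, upon substitution, gives (c).

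The delicate step --- and the one I expect to be the main obstacle --- is the apex $z=0$. Near $0$ every point of differentiability of $\Pi_\cQ$ lies in $\int\cQ$ (Jacobian $I_{r+1}$), in $\int(-\cQ)$ (Jacobian $0$), or in $\{\,|\dot z|<\|\bar z\|\,\}$, where the Jacobian displayed above is \emph{positively homogeneous of degree zero} in $z$: along $z=t(\bar\nu,\tau')$ with $\|\bar\nu\|=1$, $\tau'\in(-1,1)$ and $t\downarrow 0$ it equals the constant matrix $\tfrac12\bigl(\begin{smallmatrix}I_r+\tau'(I_r-\bar\nu\bar\nu^T)&\bar\nu\\ \bar\nu^T&1\end{smallmatrix}\bigr)$. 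Taking the closure over $\tau'\in(-1,1)$ and unit vectors $\bar\nu$, adjoining $I_{r+1}$ and the zero matrix, and rewriting $I_r+\tau'(I_r-\bar\nu\bar\nu^T)=2\tau I_r+(1-2\tau)\bar\nu\bar\nu^T$ with $\tau=(1+\tau')/2\in[0,1]$, produces exactly the set in (d). Here one must verify carefully: (i) that the endpoints $\tau\in\{0,1\}$ are included, which holds because they arise as the limits $\tau'\to\pm1$ and are consistent with (b)--(c); (ii) that no further ``mixed'' limit matrices occur, which follows from the continuity of $\cJ\Pi_\cQ$ on each of the three open regions; and (iii) that the zero matrix and $I_{r+1}$ must be listed separately, since neither is of the parametrized form (their off-diagonal blocks vanish while $\bar\nu\neq 0$). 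Combining the four cases with the block-diagonal reduction of the first paragraph yields the lemma; I would close by remarking that the resulting description agrees with the characterizations of $\partial_B\Pi_\cQ$ in Kanzow et al. \cite[Lemmas 2.5 \& 2.6]{kanzow09} and, earlier, in Pang et al. \cite{pangss}, Chen et al. \cite{chenjs} and Outrata and Sun \cite{sunsoc}.
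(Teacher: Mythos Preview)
Your argument is correct and self-contained. The paper, however, does not prove this lemma at all: it states that the result ``directly follows \cite[Lemmas 2.5 \& 2.6]{kanzow09}'' and records the earlier characterizations in \cite{pangss,chenjs,sunsoc}, treating the statement as known. So your approach is not so much different as simply \emph{more}: you supply the block-diagonal reduction, the explicit Jacobian computation on the three smooth regions, the Moreau-identity reflection for case~(c), and the degree-zero homogeneity argument at the apex, whereas the paper defers all of this to the literature. The payoff of your route is that the reader sees exactly where each alternative in (a)--(d) comes from and why no further limit matrices can occur; the paper's route keeps the exposition short by outsourcing a technical but standard computation.
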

According to \cite[Lemma 25]{bonnans}, the tangent cone of $K$ at $\mu=(\mu_1,\ldots,\mu_s)\in K$, with each $\mu_j\in\Re^{r_j+1}$, $j=1,\ldots,s$, is given by $\cT_{K}(\mu)=\cT_{\cQ_1}(\mu_1)\times\cdots\times\cT_{\cQ_s}(\mu_s)$, where
$$
\cT_{\cQ_j}(\mu_j):=
\left\{
\begin{array}{ll}
\Re^{r_j+1}, & \mbox{ if } \dot{\mu}_j>\bar{\mu}_j, \\[2mm]
\cQ_j, & \mbox{ if } \mu_j=0,\\[1mm]
\{\nu_j\in\Re^{r_j+1}\mid \langle \bar \nu_j,\bar{\mu}_j\rangle-\dot{\nu_j}\dot{\mu}_j\le 0\},\ &\mbox{ otherwise}.
\end{array}
\right.
$$
Consequently, the corresponding lineality space of $\cT_{K}(\mu)$ takes the following form
\[
\label{lintangzsoc}
{\lin}(\cT_{K}(\mu))=\Theta_1\times\cdots\times\Theta_s
\]
with
$$\Theta_j=
\left\{
\begin{array}{ll}
\Re^{r_j+1}, & \mbox{ if } \dot{\mu}_j>\bar{\mu}_j, \\
\{0\}, & \mbox{ if } \mu_j=0,\\
\{\nu_j\in\Re^{r_j+1}\mid \langle \bar \nu_j,\bar{\mu}_j\rangle-\dot{\nu_j}\dot{\mu}_j= 0\},\ &\mbox{ otherwise}.
\end{array}
\right.
$$
The following definition of constraint nondegeneracy\footnote{The name of ``constraint nondegeneracy'' was coined in Robinson in \cite{rob03}. For NLP problems, constraint nondegeneracy is exactly the LICQ, c.f. Robinson \cite{robmp} and Shapiro \cite{sha03}.} for NLSOCP problems follows from \cite[Section 2]{bon98}.

\begin{definition}[Constraint nondegeneracy]
We say that the constraint nondegeneracy holds at a feasible point $x$ to the NLSOCP problem \eqref{socp} if
\[
\label{constraintsoc}
\begin{pmatrix}
\cJ h(x)\\[1mm]
\cJ g(x)
\end{pmatrix}\Re^n
+\begin{pmatrix}
\{0\}\\[1mm]
\lin\big(
\cT_{K}(g(x))
\big)
\end{pmatrix}
=\begin{pmatrix}
\Re^{m}\\[1mm]
\Re^{p}
\end{pmatrix}.
\]
\end{definition}

Let $x^*$ be a stationary point of the NLSOCP problem \eqref{socp} with $(\lambda^*,\mu^*)\in\cM(x^*)$.
The critical cone $\cC(x^*)$ of NLSOCP at $x^*$ is defined, e.g., in \cite{pertub,bonnans}, by
$$
\cC(x^*):= \left\{d\in \Re^n\mid \cJ h(x^*)d=0,\ \cJ g(x^*)d \in \cT_{K}(g(x^*))\cap(\mu^*)^\bot\right\},
$$
where $(\mu^*)^\bot$ is the orthogonal complement of $\mu^*$ in $\Re^p$.
The explicit formulation of $\cC(x^*)$ can be found in \cite[Corollary 26]{bonnans}.
Moreover, as in \cite[(47)]{bonnans}, for any $d\in {\aff}\, \cC(x^*)$, one has that $\cJ h(x^*)d=0$ and for all $j=1,\ldots,s$,
$$
\left\{
\begin{array}{ll}
\cJ g_j(x^*)d=0, &\mbox{ if } \mu^*_j\in{\int\,}\cQ_j,
\\[1mm]
\cJ g_j(x^*)d\in{\aff}\{(-\bar{\mu}_j, \dot{\mu}_j)\}, &\mbox{ if } \mu^*_j\in\bdry\cQ_j\backslash\{0\} \mbox{ and } g_j(x^*)=0,
\\[1mm]
\langle \cJ g_j(x^*)d, \mu_j^*\rangle =0, &\mbox{ if } \mu^*_j,g_j(x^*) \in\bdry\cQ_j\backslash\{0\}.
\end{array}
\right.
$$
Hence, one has that
\[
\label{affrel}
\aff\cC(x^*)=\left\{d\in \Re^n\mid \cJ h(x^*)d=0,\ \cJ g(x^*)d \in \aff\big(\cT_{K}(g(x^*))\cap(\mu^*)^\bot\big)\right\}.
\]
The following definition of the strong second-order sufficient condition for the NLSOCP problem comes from Bonnans and Ram\'irez \cite{bonnans}.
\begin{definition}[Strong second-order sufficient condition for NLSOCP]
Let $x^*$ be a stationary point of the NLSOCP problem \eqref{socp}.
We say that the strong second-order sufficient condition holds at $x^*$ if there exists a vector $(\lambda^*,\mu^*)\in\cM(x^*)$ such that
\[
\label{soscsoc}
\left\langle d,\left(\cJ^2_{xx}\cL_0(x^*,\lambda^*,\mu^*)
+\cH(x^*,\mu^*)\right)d
\right\rangle
>0,
\quad
\forall d\in {\aff}(C(x^*))\backslash\{0\},
\]
where $\cH(x^*,\mu^*)=\sum_{j=1}^s\cH_j(x^*,\mu^*)$ is a matrix with each $\cH_j(x^*,\mu^*)\in\Re^{n\times n}$, $j=1,\ldots,s$ defined by
$$
\begin{array}{l}
\cH_j(x^*,\mu^*)
:=
\left\{
\begin{array}{ll}
\ds-\frac{\dot{\mu}_j}{\dot{g}_j(x^*)}\nabla g_j(x^*)
\begin{pmatrix}
-I_{r_j} &0\\
0& 1
\end{pmatrix}
\cJ g_j(x^*),
&\mbox{if } g_j(x^*)\in\bdry\cQ_j\backslash\{0\},
\\[4mm]
0,&\mbox{ otherwise}.
\end{array}
\right.
\end{array}
$$
\end{definition}

\subsection{Main results}
Based on the previous subsection, we are ready to establish the main result of this section.
\begin{theorem}
Suppose that both the constraint nondegeneracy \eqref{constraintsoc} and the strong second-order sufficient condition \eqref{soscsoc} hold for problem \eqref{socp} at $x^*$ with $(\lambda^*,\mu^*)\in\cM(x^*)$.
Then, Assumptions \ref{assg} and \ref{ass:pd} hold.
\end{theorem}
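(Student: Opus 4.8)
The plan is to verify the two abstract hypotheses—Assumption \ref{assg} (unique multiplier plus uniform positive definiteness of $\cA_{(c,\lambda^*,\mu^*)}(W)$ on the Bouligand-subdifferential of the projection) and Assumption \ref{ass:pd} (negative definiteness of every $V\in{\mathds V}_c(\lambda^*,\mu^*)$)—by translating constraint nondegeneracy and the strong second-order sufficient condition \eqref{soscsoc} into these statements, in the same spirit as the NLP case treated in \cite{chenlalm}. First I would establish Assumption \ref{assg}(i): for NLSOCP, constraint nondegeneracy \eqref{constraintsoc} is well known (see \cite{bonnans,bon98}) to be equivalent to uniqueness of the Lagrange multiplier at a stationary point, so $\cM(x^*)=\{(\lambda^*,\mu^*)\}$ follows immediately. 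For Assumption \ref{assg}(ii), I would fix an arbitrary $W\in\partial_B\Pi_K(\mu^*-cg(x^*))$, use the block-diagonal structure from Lemma \ref{parbpik} to identify which blocks $W_j$ arise (distinguishing $\mu_j^*\in\int\cQ_j$, $\mu_j^*\in\bdry\cQ_j\setminus\{0\}$ with or without $g_j(x^*)=0$, and $\mu_j^*=0$), and then show that the quadratic form $\langle x,\cA_{(c,\lambda^*,\mu^*)}(W)x\rangle$ decomposes as $\langle x,(\cJ^2_{xx}\cL_0(x^*,\lambda^*,\mu^*)+\cH(x^*,\mu^*))x\rangle$ on the "critical-type" subspace plus a term that grows like $c$ times a nonnegative quadratic form on the complement. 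The key algebraic identity to extract is that, for the relevant $W_j$, $c\,\nabla g_j(x^*)W_j\cJ g_j(x^*)$ restricted to directions $d$ with $\cJ g_j(x^*)d\notin\aff(\cT_K(g(x^*))\cap(\mu^*)^\bot)$ contributes a term bounded below by a positive multiple of $c\,\|\cJ g_j(x^*)d\|^2$ after projecting off the "aff" part, while on the aff directions the $W_j$-term reproduces exactly the Hessian-of-projection correction $\cH_j(x^*,\mu^*)$ appearing in \eqref{soscsoc}.

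The decisive step is then a standard "penalty + SOSC on the affine hull" argument: writing $x=d+e$ where $d$ is the component of $x$ in $\aff\cC(x^*)$ (equivalently, using \eqref{affrel}, the component with $\cJ h(x^*)x$ and the non-aff parts of $\cJ g(x^*)x$ killed) and $e$ the complement, one has $\langle x,\cA_{(c,\lambda^*,\mu^*)}(W)x\rangle \ge \alpha\|d\|^2 + c\beta\|e\|^2 - \gamma\|d\|\|e\| - \gamma'\|e\|^2$ for constants $\alpha>0$ (from \eqref{soscsoc}), $\beta>0$ (from constraint nondegeneracy, which guarantees the relevant map is surjective so $\|e\|$ controls $\|\cJ h(x^*)x\|+\|(\text{non-aff part of }\cJ g(x^*)x)\|$ from below), and $\gamma,\gamma'$ depending only on the fixed data at $x^*$. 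For $c$ large enough, completing the square yields $\langle x,\cA_{(c,\lambda^*,\mu^*)}(W)x\rangle\ge\eta\|x\|^2$ with $\eta>0$ independent of $W$ and of $c\ge\bar c$, which is exactly Assumption \ref{assg}(ii). I would be careful to make the bounds uniform over the (now compact, by Lemma \ref{parbpik}) set of admissible $W$, in particular over the parameter $\tau\in[0,1]$ and unit vectors $\bar\nu$ in case (d); since each such $W_j$ is positive semidefinite, its contribution to the $c$-term is nonnegative, so only a uniform lower bound of the form $c\beta\|e\|^2$ on the sum is needed, not a block-by-block one.

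Finally, for Assumption \ref{ass:pd}, I would invoke \eqref{alimit}, which identifies $\cA_c(\lambda^*,\mu^*,W)$ with $\cA_{(c,\lambda^*,\mu^*)}(W)$, so that the just-established uniform positive definiteness makes every such $\cA$ invertible with $(\cA)^{-1}\succ 0$. Plugging this into the explicit formula \eqref{v} for ${\mathds V}_c(\lambda^*,\mu^*)$, each $V$ is $-M(\cA)^{-1}M^* - \frac{1}{c}\,\mathrm{diag}(0,\cI_\cZ-W)$ for the appropriate $M$; the first summand is negative semidefinite and the second is negative definite precisely when $\cI_\cZ-W\succ 0$ on the relevant block, which Lemma \ref{parbpik} shows fails in general (e.g. $W_j=I$ is allowed). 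Hence I would instead argue, exactly as in \cite{chenlalm}, that strict negative definiteness of the sum follows by combining the kernel of $\cI_\cZ-W$ with the strict negative definiteness contributed through $M(\cA)^{-1}M^*$ on directions that the projection leaves fixed—this interplay is where constraint nondegeneracy re-enters to guarantee $M^*y\ne 0$ whenever $(\cI_\cZ-W)y$ vanishes and $y\ne0$. I expect this last coupling argument to be the main obstacle, since it requires a careful case analysis of $\partial_B\Pi_K$ on the second-order cone blocks together with the surjectivity of the nondegeneracy map, rather than a purely "large-$c$" estimate; the rest is routine once the block structure of Lemma \ref{parbpik} is matched to the $\cH_j$ terms in \eqref{soscsoc}.
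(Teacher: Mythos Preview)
Your plan is correct and largely matches the paper's approach. For Assumption \ref{assg} the paper does not give an argument at all: it simply cites \cite[Proposition 17]{bonnans} for uniqueness of the multiplier and \cite[Proposition 10]{liuzhang1} for the uniform positive definiteness of $\cA_{(c,\lambda^*,\mu^*)}(W)$, so your direct ``penalty + SOSC on the affine hull'' derivation is more than the paper itself supplies (it is, in essence, a re-derivation of the cited result from \cite{liuzhang1,liuzhang2}). One small caution: the statement that on $\aff\cC(x^*)$ the term $c\,\nabla g_j(x^*)W_j\cJ g_j(x^*)$ ``reproduces exactly'' $\cH_j(x^*,\mu^*)$ is not literally right, since $W_j$ itself depends on $c$ through ratios like $(\sigma_j-c)/(\sigma_j+c)$; what one actually gets on those directions is $\cH_j$ plus a nonnegative $c$-dependent remainder, which is what makes the large-$c$ argument go through.

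For Assumption \ref{ass:pd} your strategy coincides with the paper's. The paper argues exactly that if $\langle\Delta y,V\Delta y\rangle=0$ then both $\nabla h(x^*)\Delta\lambda+\nabla g(x^*)W\Delta\mu=0$ and $\langle\Delta\mu,(I-W)\Delta\mu\rangle=0$, and then uses constraint nondegeneracy to force $\Delta y=0$. The only stylistic difference is that the paper invokes nondegeneracy in its \emph{primal} form---writing $\Delta\lambda=\cJ h(x^*)d$, $\Delta\mu=\cJ g(x^*)d+\xi$ with $\xi\in\lin\cT_K(g(x^*))$ and computing $\|\Delta y\|^2=\langle W\Delta\mu,\xi\rangle$---whereas your formulation uses the dual form (if $W\Delta\mu=\Delta\mu$ then $\Delta\mu\in\big(\lin\cT_K(g(x^*))\big)^\perp$, hence $\nabla h\Delta\lambda+\nabla g\Delta\mu=0$ forces $\Delta y=0$). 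Both routes require precisely the four-case block analysis of $\partial_B\Pi_{\cQ_j}$ from Lemma \ref{parbpik} that you anticipate; the paper's Case 4 (both $g_j(x^*)$ and $\mu_j^*$ on the boundary and nonzero) is where the algebra is most delicate, ultimately yielding $(\Delta\mu)_j\in\mathrm{span}\big((\bar g_j(x^*),-\dot g_j(x^*))\big)$, which is exactly $(\lin\cT_{\cQ_j}(g_j(x^*)))^\perp$.
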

\begin{proof}
Assumption \ref{assg} directly follows from \cite[Proposition 17]{bonnans} and \cite[Proposition 10]{liuzhang1}.
In the following, we show that Assumption \ref{ass:pd} holds.

Let $c\ge\bar c$ be fixed.
Denote $z=(z_1,\ldots,z_s):=\mu^*-cg(x^*)$ with each $z_j:=\mu_j-cg_j(x^*)$, and $g_j(\cdot)=(\bar g_j(\cdot),\dot g_j(\cdot))$, $\forall j=1,\ldots,s$.
Since Assumption \ref{assg} holds, we know that for any $W\in\partial_B\Pi_{K}(z)$, the linear operator $\cA_{(c,\lambda^*,\mu^*)}(W)$ defined in \eqref{def:A} is positive definite, so that by \eqref{alimit} we know that ${\mathds V}_c(\lambda^*,\mu^*)$ defined by \eqref{v} is nonempty.
Suppose that there exists a certain $W\in\partial_B\Pi_{K}(z)$ such that the corresponding $V\in{\mathds V}_c(y^*)$ is not  {negative} definite.
From \eqref{v} we know that $V\preceq 0$.
Therefore, what we actually suppose is that $V$ is singular, i.e., there exists two vectors $\Delta\lambda\in\Re^{m}$ and $\Delta\mu\in\Re^{p}$ such that
\[
\label{contrac}
\Delta y:=(\Delta\lambda,\Delta\mu)\neq 0\quad\mbox{but}\quad\langle \Delta y, V \Delta y\rangle=0.
\]
Note that
\[
\label{vdetv}
\begin{array}{ll}
-\left\langle \Delta y, V(\Delta y)\right\rangle
=
&
\ds \big\langle
\nabla h(x^*)(\Delta\lambda)+\nabla g(x^*) W(\Delta\mu),
\\[1mm]
&\quad
(\cA_{(c,\lambda^*,\mu^*)}(W))^{-1}
\big(\nabla h(x^*)(\Delta\lambda)+\nabla g(x^*) W(\Delta\mu)\big) \big\rangle
\\[2mm]
&+\frac{1}{c}
\left
\langle \Delta\mu,
(I_{p}-W)(\Delta\mu)
\right
\rangle.
\end{array}
\]
Meanwhile, from Lemma \ref{parbpik} we know that $W=\diag(W_1,\ldots,W_s)$ with each $W_j\in\Re^{(r_j+1)\times(r_j+1)}$, $\forall j=1,\ldots,s$.
Moreover, it is easy to deduce from \cite[Proposition 1]{meng} that $I_{r_j}\succeq W_j\succeq 0$.
Hence, by using the fact that $\cA_{(c,\lambda^*,\mu^*)}(W)\succ 0$, one has from \eqref{contrac} and \eqref{vdetv} that
\[
\label{eq001soc}
\nabla h(x^*)(\Delta\lambda)+\nabla g(x^*) W(\Delta\mu)=0,
\]
and
\[
\label{eq002soc}
\langle (\Delta\mu)_j,
(I_{r_j+1}- W_j)(\Delta\mu)_j
\rangle
=0,\quad\forall j=1,\ldots,s.
\]
Since the constraint nondegeneracy condition \eqref{constraintsoc} holds at $x^*$,
there exist a vector $d\in \Re^n$ and a vector $\xi=(\xi_1,\ldots,\xi_s)\in{\lin}\big(
\cT_{K}(g(x^*))
\big)$ with $\xi_j=(\bar{\xi}_j,\dot{\xi}_j)\in\Re^{r_j}\times\Re$, $\forall j=1,\ldots,s$, such that
$$
\cJ h(x^*)d=\Delta\lambda
\quad
\mbox{and}\quad
\cJ g(x^*)d+\xi=\Delta\mu.
$$
Therefore, one can get from \eqref{eq001soc} and \eqref{eq002soc} that
\[
\label{deltaplus}
\begin{array}{l}
\|\Delta\lambda\|^2+\|\Delta\mu\|^2
=\langle \Delta\lambda,\Delta\lambda\rangle+\langle\Delta\mu, \Delta\mu\rangle
\\[2mm]
=\langle \Delta\lambda,\Delta\lambda\rangle
+\langle W\Delta\mu, \Delta\mu\rangle
+\langle (I_{p}-W)\Delta\mu, \Delta\mu\rangle
\\[2mm]
=\langle \Delta\lambda, \cJ h(x^*)d\rangle
+\langle W\Delta\mu, \cJ g(x^*)d+\xi\rangle
+\langle (I_{p}- W)\Delta\mu, \Delta\mu\rangle
\\[2mm]
=\langle \nabla h(x^*)(\Delta\lambda)+\nabla g(x^*) W(\Delta\mu), d\rangle
+\langle W\Delta\mu, \xi\rangle
+\langle (I_{p}- W)\Delta\mu, \Delta\mu\rangle
\\[2mm]
=\langle W\Delta\mu, \xi\rangle.
\end{array}
\]
Since $\xi\in{\lin}\big(\cT_{K}(g(x^*))\big)$, one has from \eqref{lintangzsoc} that for any $j=1,\ldots,s$,
\[
\label{gsrelation}
\left\{
\begin{array}{ll}
\xi_j\in\Re^{r_j+1}, & \mbox{ if } g_j(x^*)\in {\int\, }\cQ_j ,
\\[1mm]
\xi_j=0, & \mbox{ if } g_j(x^*)=0,
\\[1mm]
\langle \bar{\xi}_j,\bar{g}_j(x^*)\rangle-\dot{\xi}_j \dot{g}_j(x^*)= 0, &\mbox{ otherwise.}
\end{array}
\right.
\]
Now, for each $j\in\{1,\ldots,s\}$, we separate our analysis to the following four cases.

{\bf Case 1. }If $g_j(x^*)\in{\int }\, \cQ_j$, one has that $\mu_j^*=0$ so that $-z_j=cg_j(x^*)\in{\int}\,\cQ_j$. Therefore, by Lemma \ref{parbpik} (a) we know that $W_j=0$. Hence, in this case, $\langle W_j(\Delta\mu)_j, \xi_j\rangle=0$.

 {\bf Case 2. }If $g_j(x^*)=0$, one has that $\xi_j=0$. Hence, $\langle W_j(\Delta\mu)_j, \xi_j\rangle=0$ also holds.

{\bf Case 3. }If $g_j(x^*)\in \bdry\, \cQ_j\backslash \{0\}$ and $\mu_j^*=0$, one has that $-z_j\in\bdry \cQ_j\backslash\{0\}$. Hence, in this case, we know from Lemma \ref{parbpik} (c) that
$$
W_j\in\left\{0,
\frac{1}{2}
\begin{pmatrix}
\ds
\frac{\bar{z}_j \bar{z}_j^T}{\|\bar{z}_j\|^2}
&\ \ds\frac{\bar{z}_j}{\|\bar{z}_j\|}
\\[3mm]
\ds\frac{\bar{z}_j^T}{\|\bar{z}_j\|}&\ds 1
\end{pmatrix}
\right\}.
$$
If $W_j=0$, one has that $\langle W_j(\Delta\mu)_j, \xi_j\rangle=0$. Otherwise, one has that
\[
\label{wxi3}
\begin{array}{ll}
W_j\xi_j
&=\frac{1}{2}
\begin{pmatrix}
\ds
\frac{\bar{z}_j \bar{z}_j^T}{\|\bar{z}_j\|^2}
&\ds\frac{\bar{z}_j}{\|\bar{z}_j\|}
\\[3mm]
\ds\frac{\bar{z}_j^T}{\|\bar{z}_j\|}&\ds 1
\end{pmatrix}\xi_j
=
\frac{1}{2}\begin{pmatrix}
\ds
\frac{(\bar{z}_j^T\bar{\xi}_j)\bar{z}_j }{\|\bar{z}_j\|^2}
+\frac{\dot{\xi}_j\bar{z}_j}{\|\bar{z}_j\|}
\\[3mm]
\ds\frac{\bar{z}_j^T\bar{\xi}_j}{\|\bar{z}_j\|}+\dot{\xi}_j
\end{pmatrix}
\\[10mm]
&=\frac{1}{2}
\begin{pmatrix}
\ds\frac{1}{{\|\bar{z}_j\|^2}}
\left(\bar{z}_j^T \bar{\xi}_j
+{\dot{\xi}_j}{\| \bar{z}_j\|}\right) \bar{z}_j
\\[3mm]
\ds\frac{1}{\| \bar{z}_j\|}
\left( \bar{z}_j^T \bar{\xi}_j+\dot{\xi}_j\| \bar{z}_j\|\right)
\end{pmatrix}.
\end{array}
\]
Note that $\mu_j^*=0$, $\dot{z}_j=-\|\bar{z}_j\|$, and $\dot g_j(x^*)>0$. Then, one can see from \eqref{gsrelation} that
$
\bar{\xi}_j^T\bar{z}_j +\dot{\xi}_j\| \bar{z}_j\|
=\bar{\xi}_j^T\bar{z}_j -\dot{\xi}_j\dot{z}_j
=-c\langle \bar{\xi}_j, \bar g_j(x^*)\rangle
+c\dot\xi_j\dot g_j(x^*)=0.
$
Therefore, one can get from \eqref{wxi3} that $\langle W_j(\Delta\mu)_j, \xi_j\rangle=0$.

{\bf Case 4. }If $g_j(x^*)\in \bdry\cQ_j\backslash \{0\}$ and $\mu_j^*\neq 0$, from \cite[Lemma 2.2]{liuzhang2}\footnote{The proof of this Lemma originally comes from \cite{aliz}.} we know that there exists a constant $\sigma_j>0$ such that $\mu_j^*=\sigma_j(-\bar{g}_j(x^*),\dot{g}_j(x^*))$.
Hence, in this case, it holds that
\[
\label{zexp}
z_j=\mu_j^*-cg_j(x^*)=\big(-(\sigma_j+c)\bar{g}_j(x^*),(\sigma_j-c)\dot{g}(x^*)\big).
\]
Since $\sigma_j+c> \sigma_j-c>-(\sigma_j+c)$, it holds that
$-\|\bar{z}_j\|< \dot{z}_j<\|\bar{z}_j\|$.
Therefore, by using Lemma \ref{parbpik} (a), the equation \eqref{zexp}, and the fact that
$\|\bar{g}_j(x^*)\|=\dot{g}_j(x^*)$, one can get that
\[
\label{wjsocp}
\begin{array}{lll}
\ds
W_j &=&\ds \frac{1}{2}\begin{pmatrix}
\ds
\left(1+\frac{\dot{z}_j}{\|\bar{z}_j\|}\right)
I_{r_j}-\frac{\dot{z}_j}{\|\bar{z}_j\|}\frac{\bar{z}_j\bar{z}_j^T}{\|\bar{z}_j\|^2}
&\ds\frac{\bar{z}_j}{\|\bar{z}_j\|}\\[4mm]
\ds\frac{\bar{z}_j^T}{\|\bar{z}_j\|}&\ds 1
\end{pmatrix}
\\[10mm]
&=&\ds\frac{1}{2}\begin{pmatrix}
\ds
\left(1+\frac{\sigma_j-c}{\sigma_j+c}\right)I_{r_j}
-\frac{\sigma_j-c}{\sigma_j+c}
\frac{ \bar{g}_j(x^*)(\bar{g}_j(x^*))^T}{\|\bar{g}_j(x^*)\|^2}
&\ds\quad\frac{-\bar{g}_j(x^*)}{\|\bar{g}_j(x^*)\|}
\\[4mm]
\ds
\frac{(-\bar{g}_j(x^*))^T}{\|\bar{g}_j(x^*)\|}& \ds 1
\end{pmatrix}.
\end{array}
\]
Therefore, it holds that
$$
\begin{array}{ll}
(I_{r_j+1}-W_j)(\Delta\mu)_j
\\[2mm]
=
\frac{1}{2}\begin{pmatrix}
\ds
\frac{2c}{\sigma_j+c}{{(\Delta\bar\mu)}_j}
+\frac{\sigma_j-c}{\sigma_j+c}
\frac{(\bar{g}_j(x^*))^T(\Delta\bar\mu)_j}{\|\bar{g}_j(x^*)\|^2}\bar{g}_j(x^*)
+\frac{(\Delta\dot\mu)_j}{\|\bar{g}_j(x^*)\|}\bar{g}_j(x^*)
\\[4mm]
\ds\frac{(\bar{g}_j(x^*))^T(\Delta\bar\mu)_j}{\|\bar{g}_j(x^*)\|}+(\Delta\dot\mu)_j
\end{pmatrix},
\end{array}
$$
which, together with \eqref{eq002soc}, implies that
$$
\frac{2c}{\sigma_j+c}\|(\Delta\bar\mu)_j\|^2
+\frac{\sigma_j-c}{\sigma_j+c}
\frac{[(\bar g_j(x^*))^T(\Delta\bar\mu)_j]^2}{\|\bar{g}_j(x^*)\|^2}
+2\frac{(\Delta\dot\mu)_j(\bar g_j(x^*))^T(\Delta\bar\mu)_j
}{\|\bar{g}_j(x^*)\|}
+(\Delta\dot\mu)_j^2
=0.
$$
If $(\Delta\bar\mu)_j=0$, the above equation implies that $(\Delta\dot\mu)_j=0$ so that $\langle W_j(\Delta\mu)_j, \xi_j\rangle=0$.
Otherwise, one has that $(\Delta\bar\mu)_j\neq 0$, and the above equation can be equivalently formulated as
$$
\frac{2c}{\sigma_j+c}
+\frac{\sigma_j-c}{\sigma_j+c}
\frac{[(\bar g_j(x^*))^T(\Delta\bar\mu)_j]^2}{\|\bar{g}_j(x^*)\|^2\|(\Delta\bar\mu)_j\|^2}
+2\frac{(\Delta\dot\mu)_j(\bar g_j(x^*))^T(\Delta\bar\mu)_j}{\|\bar{g}_j(x^*)\|\|(\Delta\bar\mu)_j\|^2}
+\frac{(\Delta\dot\mu)_j^2}{\|(\Delta\bar\mu)_j\|^2}
=0.
$$
One can reformulate the above equality
\[
\label{keyequa}
\omega_j^2+2\theta_j\omega_j
+\frac{2c}{\sigma_j+c}
+\frac{\sigma_j-c}{\sigma_j+c}
\theta_j^2
=0
\]
with
$$
\theta_j:=\frac{(\bar g_j(x^*))^T(\Delta\bar\mu)_j}{
\|\bar g_j(x^*)\|\|(\Delta\bar\mu)_j\|}
\quad\mbox{and}
\quad
\omega_j:=\frac{(\Delta\dot\mu)_j}{\|(\Delta\bar\mu)_j\|}.
$$
Then, in order to ensure \eqref{keyequa}, its
discriminant (by viewing it as a quadratic polynomial of $\omega_j$) satisfies
$
4\theta^2_j-4\left(\frac{2c}{\sigma_j+c}
+\frac{\sigma_j-c}{\sigma_j+c}
\theta^2_j\right)\ge 0$,
which implies that
$$
\frac{c}{\sigma_j+c}\theta^2_j-\frac{c}{\sigma_j+c}
\ge 0.
$$
As a result, it holds that $\theta^2_j=1$.
Note that, if $\theta_j=1$, one has from \eqref{keyequa} that $\omega_j=-1$, or else $\omega_j=1$.
Hence, there always exists a nonzero constant $\kappa_j$ such that
\[
\label{detmuj}
(\Delta\mu)_j=\kappa_j\big(\bar{g}_j(x^*),-\dot{g}_j(x^*)\big)
\quad
\mbox{and}
\quad
\left\{
\begin{array}{ll}
\kappa_j>0,&\mbox{ if } \theta_j=1,
\\
\kappa_j<0,&\mbox{ if } \theta_j=-1.
\end{array}
\right.
\]
Note that $\dot{g}_j(x^*)=\|\bar{g}_j(x^*)\|$. Hence, from \eqref{wjsocp} and \eqref{detmuj} we know that
$$
\begin{array}{ll}
&W_j(\Delta\mu)_j
\\[2mm]
&=
\ds
\frac{1}{2}\begin{pmatrix}
\ds
\frac{2\sigma_j}{\sigma_j+c}\kappa_j\bar{g}_j(x^*)
-\frac{\sigma_j-c}{\sigma_j+c}
\frac{((\bar{g}_j(x^*))^T\bar{g}_j(x^*)}
{\|\bar{g}_j(x^*)\|^2}\kappa_j\bar{g}_j(x^*)
-\frac{-\kappa_j\dot{g}_j(x^*)}{\|\bar{g}_j(x^*)\|}\bar{g}_j(x^*)
\\[4mm]
\ds-\kappa_j\frac{(\bar{g}_j(x^*))^T\bar{g}_j(x^*)}{\|\bar{g}_j(x^*)\|}-\kappa_j \dot{g}_j(x^*)
\end{pmatrix}
\\[10mm]
&=\ds
\left(
\kappa_j \bar{g}_j(x^*),
-\kappa_j \dot{g}_j(x^*)
\right).
\end{array}
$$
Then, according to the above equation and \eqref{gsrelation}, one has that
$$\langle \xi_j, W_j(\Delta\mu)_j\rangle
=\kappa_j\big(\langle \bar{\xi}_j,\bar{g}_j(x^*)\rangle-\dot{\xi}_j \dot{g}_j(x^*)\big)= 0.
$$

{ {From all above discussion, we have proved that $\langle W_j(\Delta\mu)_j, \xi_j\rangle=0,\,\forall j=1,\ldots, s$.}} Consequently, from \eqref{deltaplus} we know that
$\|\Delta\lambda\|^2+\|\Delta\mu\|^2=0$.
This contradicts \eqref{contrac}.
That is, from $\left\langle \Delta y, V(\Delta y)\right\rangle=0$ one can only get $\Delta y=0$. Therefore, any $V\in{\mathds V}_c(\lambda^*,\mu^*)$ is  negative definite, which completes the proof.
\end{proof}

\section{The NLSDP case}
\label{sect6}
Let $\cS^p$ be the linear space of all $p\times p$ real symmetric matrices and $\cS^p_+$ be the cone of positive semidefinite matrices in $\cS^p$. We consider the NLSDP problem:
\begin{equation}
\min_x f(x)\quad\mbox{s.t.}\quad h(x)=0,\quad g(x)\in\cS_+^p,
\label{sdp}
\end{equation}
where
$f:\cX\to\Re$, $h: \cX\to \Re^{m}$ and $g: \cX\to\cS^p$ are three twice continuously differentiable functions.
Problem \eqref{sdp} is an instance of problem \eqref{op} with $\cZ=\cS^p$ and $K=\cS_+^{p}$.
We first give a quick review of some preliminary results for NLSDP problems.
One may refer to \cite{sunmor} for more details.

\subsection{Preliminaries on NLSDP}
For any two matrices $Z, Z'\in\cS^p$, the inner product of them defined by $\langle Z, Z'\rangle:= {\rm Tr}(Z^TZ')$ and its induced norm is the Frobenius norm given by $\|Z\|=\sqrt{\langle Z,Z\rangle}$.
Moreover, we write $Z_+:=\Pi_{\cS_+^p}(Z)$. Hence, it holds that
\[
\label{ortho}
Z_+\in \cS_+^p, \quad Z_+-Z\in\cS_+^p,\quad \langle Z_+, (Z_+-Z)\rangle =0.
\]
One can write its spectral decomposition as $Z=PDP^T$ with $P\in\Re^{p\times p}$ being an orthogonal matrix and $D\in\Re^{p\times p}$ being diagonal matrix whose diagonal entries are ordered by $\varrho_{1}\ge \varrho_{2},\ldots,\ge \varrho_{p}$.
We use the three index sets $\alpha,\beta$ and $\gamma$ to indicate the positive, zero, and negative eigenvalues of $Z$, respectively, i.e.,
\[
\label{abc}
\alpha:=\{i\mid \varrho_i>0\},\quad
\beta:=\{i\mid \varrho_i=0\}\quad
\mbox{and}
\quad
\gamma:=\{i\mid \varrho_i<0\},\quad
\forall i=1,\ldots,p.
\]
Then, one can write
\[
\label{sdpd}
D=\begin{pmatrix}
D_\alpha & 0&0\\
0 &D_\beta & 0\\
0 & 0 & D_\gamma
\end{pmatrix}
\quad
\mbox{and}
\quad
P=
\begin{pmatrix}
P_\alpha, P_\beta, P_\gamma
\end{pmatrix}
\]
with $P_\alpha\in\Re^{p\times |\alpha|}$,
$P_\beta\in\Re^{p\times |\beta|}$, and
$P_\gamma\in\Re^{p\times |\gamma|}$.
Moreover, we define
the matrix $U\in\cS^{p}$ by
\[
\label{defU}
U_{ij}:=\frac{\max\{\varrho_i,0\}+\max\{\varrho_j, 0\}}{|\varrho_i|+|\varrho_j|}, \quad i,j=1, \ldots, p,
\]
with the convention that $0/0=1$.
As has been shown in Sun and Sun \cite{sunsun}, the projection operator $\Pi_{\cS_+^p}$ is strongly semismooth.
Moreover, the following result in Pang et al. \cite[Lemma 11]{pangss} characterizes the Bouligand-subdifferential of this projection operator.
\begin{lemma}
\label{lem18}
Suppose that $Z\in\cS^p$ has the spectral decomposition $Z=PDP^T$ with $P$ and $D$ being partitioned by \eqref{sdpd}. Then, for any $\bW\in\partial_B\Pi_{\cS_+^p}(Z)$, there exist two index sets $\alpha'$ and $\gamma'$ that partition $\beta$, together with a matrix $\Gamma\in\Re^{|\alpha'|\times|\gamma'|}$ with entries in $[0,1]$, such that
\[
\label{patvh}
\begin{array}{l}
\bW(H)=P
\begin{pmatrix}
[P^THP]_{\alpha\alpha}& [P^THP]_{\alpha\beta} & U_{\alpha\gamma}\circ[P^THP]_{\alpha\gamma}
\\[2mm]
[P^THP]_{\alpha\beta}^T & M([P^THP]_{\beta\beta}) & 0
\\[2mm]
[P^THP]_{\alpha\gamma}^T\circ U_{\alpha\gamma}^T & 0& 0
\end{pmatrix}
P^T,\qquad
\\[2mm]
\hfill
\forall\,H\in\cS^p,
\end{array}
\]
where $``\circ"$ denotes the Hadamard product and
$$
M([P^THP]_{\beta\beta}):=
\begin{pmatrix}
([P^THP]_{\beta\beta})_{\alpha'\alpha'} & \Gamma\circ ([P^THP]_{\beta\beta})_{\alpha'\gamma'}
\\[2mm]
([P^THP]_{\beta\beta})_{\gamma'\alpha'}\circ \Gamma^T& 0
\end{pmatrix}.
$$
\end{lemma}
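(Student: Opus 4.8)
\emph{Proof idea.} This characterisation is classical and is quoted here from Pang, Sun and Sun; the plan is to recover it from the differentiability theory of the L\"owner operator $Z\mapsto\Pi_{\cS_+^p}(Z)$ attached to the scalar map $t\mapsto\max\{t,0\}$, together with a limiting argument along differentiability points. First I would record the only analytic input. The spectral operator $Z=PDP^T\mapsto P\max\{D,0\}P^T$ is Fr\'echet-differentiable at $Z$ if and only if $0$ is \emph{not} an eigenvalue of $Z$, i.e. $\beta=\emptyset$ in \eqref{abc}; and at such a point the classical divided-difference (Daleckii--Krein) formula reads
\[
\cJ\Pi_{\cS_+^p}(Z)(H)=P\big(U\circ(P^THP)\big)P^T,\qquad\forall\,H\in\cS^p,
\]
where $U$ is precisely the matrix \eqref{defU} built from the eigenvalues of $Z$ (the $0/0=1$ convention is vacuous here, there being no zero eigenvalue). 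Everything after this is bookkeeping.

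Second, I would fix $\bW\in\partial_B\Pi_{\cS_+^p}(Z)$ and choose a sequence $Z^k\to Z$ of differentiability points with $\cJ\Pi_{\cS_+^p}(Z^k)\to\bW$, so each $Z^k$ has no zero eigenvalue. Writing $Z^k=P^kD^k(P^k)^T$ with the diagonal of $D^k$ nonincreasing and using compactness of the set of $p\times p$ orthogonal matrices, I pass to a subsequence along which $P^k\to\bar P$; then $D^k=(P^k)^TZ^kP^k$ tends to a diagonal matrix $D$ with $\bar PD\bar P^T=Z$, and $D$ is the (ordered) eigenvalue matrix of $Z$, so $(\bar P,D)$ is a spectral decomposition of $Z$ in the form \eqref{sdpd}. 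For $i\in\alpha$ (resp. $i\in\gamma$) the $i$-th eigenvalue of $Z^k$ is eventually positive (resp. negative); for $i\in\beta$ it tends to $0$ through nonzero values and hence, after one more subsequence, has a constant sign. This defines the partition $\beta=\alpha'\cup\gamma'$, with $\alpha'$ consisting of the $\beta$-indices with eventually positive $Z^k$-eigenvalue and $\gamma'$ the rest.

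Third, I would pass to the limit in $U^k\circ((P^k)^THP^k)$ entry by entry. Off the $\beta$-block the entries of $U^k$ converge to those of the $U$-matrix of $Z$; the $\alpha\times\beta$ entries converge to $1$, while the $\gamma\times\beta$ and $\gamma\times\gamma$ entries converge to $0$ (the numerators vanishing and the denominators either bounded away from $0$ or handled by equal signs); inside $\beta\times\beta$ the $\alpha'\times\alpha'$ entries converge to $1$, the $\gamma'\times\gamma'$ entries to $0$, and the $\alpha'\times\gamma'$ entries, each of the form $\varrho^k_i/(\varrho^k_i-\varrho^k_j)\in(0,1)$, converge along a final subsequence to some $\Gamma\in[0,1]^{|\alpha'|\times|\gamma'|}$. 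Continuity of the Hadamard product and of $(Q,H)\mapsto Q^THQ$ then gives $\bW(H)=\bar P\big(\bar U\circ(\bar P^TH\bar P)\big)\bar P^T$, where $\bar U$ carries exactly the block pattern displayed in \eqref{patvh}; rewriting in block form and recognising the $U_{\alpha\gamma}$ block from the limit $\varrho^k_i/(\varrho^k_i-\varrho^k_j)\to\varrho_i/(\varrho_i-\varrho_j)$ yields the stated expression, with $M(\cdot)$ the operator determined by $\alpha',\gamma',\Gamma$.

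The step I expect to cost the most care is the eigenvector bookkeeping when $Z$ has repeated eigenvalues. One must verify that (i) $\bar P$ may be used in place of the prescribed $P$ — for the $\alpha$- and $\gamma$-parts this is harmless because $U_{\alpha\gamma}$ is constant on sub-blocks of equal positive/negative eigenvalues, so rotations of $\bar P$ inside those eigenspaces do not affect the right-hand side of \eqref{patvh}, while the $\beta$-part is exactly the freedom already parametrised by the unknowns $\alpha',\gamma',\Gamma$ (one reads the lemma with the spectral decomposition of $Z$ chosen to fit the sequence); and (ii) the zero blocks in \eqref{patvh} — the $\beta\times\gamma$ and $\gamma\times\gamma$ blocks and the $\gamma'\times\gamma'$ sub-block of $M$ — are genuinely forced, which is precisely what the sign-stabilisation of the $\beta$-eigenvalues and the vanishing-numerator ratios above provide. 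The reverse inclusion (every $\bW$ of the form \eqref{patvh} arising as such a limit, obtained by perturbing $Z$ inside the span of $P_\beta$ with prescribed eigenvalue signs and decay rates) is not needed for the lemma, but is how I would double-check that the described family is exactly $\partial_B\Pi_{\cS_+^p}(Z)$.
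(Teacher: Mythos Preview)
The paper does not prove this lemma at all: it is stated there as a quotation of Pang, Sun and Sun \cite[Lemma~11]{pangss}, with no argument given. Your sketch is essentially the proof that appears in that reference --- the Daleckii--Krein formula for the L\"owner extension of $t\mapsto t_+$ at points with $\beta=\emptyset$, followed by a compactness/limiting argument along a sequence of differentiability points, with the sign pattern of the $\beta$-eigenvalues along the sequence producing the partition $\alpha'\cup\gamma'$ and the ratios $\varrho_i^k/(\varrho_i^k-\varrho_j^k)$ producing $\Gamma$. So there is nothing to compare against in the paper itself, and your approach is the standard one.

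The one place where your write-up is slightly loose is the passage from the limit eigenvector matrix $\bar P$ to the \emph{given} spectral decomposition $P$ in \eqref{sdpd}. You correctly observe that rotations within eigenspaces corresponding to a common nonzero eigenvalue leave the $\alpha$- and $\gamma$-blocks of the formula unchanged (since $U$ is constant on such sub-blocks), but for the $\beta$-block the claim that a rotation $Q\in\Re^{|\beta|\times|\beta|}$ is ``absorbed into the freedom of $\alpha',\gamma',\Gamma$'' needs one more sentence: conjugating $M$ by $Q$ generally destroys the displayed $2\times2$ block structure, so one really does have to allow the orthogonal matrix in the statement to be the $\bar P$ produced by the limit rather than an arbitrary prescribed $P$. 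In Pang--Sun--Sun this is exactly how the lemma is phrased (the spectral decomposition is the one coming from the sequence), and the paper's statement should be read the same way. With that understood, your argument is complete.
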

It is easy to see that
\[
\label{tangz}
\cT_{\cS_+^p}(Z_+)
=\{H\in \cS^p \mid P_{\bar\alpha}^T H P_{\bar \alpha}\succeq 0\},
\]
where $\bar\alpha:=\beta\cup\gamma$ and $P_{\bar\alpha}:= [P_\beta, P_\gamma]$.
From \eqref{tangz} we know that the lineality space of this set can be represented by
\[
\label{lintangz}
{\lin}\big(\cT_{\cS_+^p}(Z^+)\big) =\{H\in \cS^p \mid P_{\bar\alpha}^T H P_{\bar \alpha}= 0\}.
\]
The following definition of constraint nondegeneracy for the NLSDP problem comes from Sun \cite[Definition 3.2]{sunmor}, which is an analogue of the LICQ for NLP problems.
The relation between this condition and the concept of nondegeneracy in Robinson \cite{robmp} also has been well discussed in \cite[Section 3]{sunmor}.
\begin{definition}[Constraint nondegeneracy]
We say that a feasible point $x$ to the NLSDP problem is constraint nondegenerate if
\[
\label{constraintq}
\begin{pmatrix}
\cJ h(x)\\
\cJ g(x)
\end{pmatrix}\cX
+\begin{pmatrix}
\{0\}\\
{\lin}\big(
\cT_{\cS_+^p}(g(x))
\big)
\end{pmatrix}
=\begin{pmatrix}
\Re^{m}\\
\cS^p
\end{pmatrix}.
\]
\end{definition}

The critical cone of $\cS_+^p$ at $Z\in\cS^p$, is defined by
$$
\begin{array}{ll}
\cC(Z;S_+^p):&=\cT_{\cS_+^p}(Z_+)\cap(Z_+-Z)^\bot,
\\[2mm]
&=\{H\in\cS^p: P_\beta^THP_\beta\succeq 0,\, P_\beta^T HP_\gamma =0,\,
P_\gamma^THP_\gamma =0\}.
\end{array}
$$
Therefore,
$${\aff}\big(\cC(Z;S_+^p)\big)=\{H\in\cS^p\mid P_\beta^T HP_\gamma =0,\, P_\gamma^THP_\gamma =0\}.$$
Let $(x^*,\lambda^*,\mu^*)$ be a solution to the KKT system \eqref{kkt_2nd} of the NLSDP problem \eqref{sdp}. Then, the critical cone $\cC(x^*)$ of NLSDP at $x^*$ is defined as
$$
\cC(x^*):= \{d\in\cX\mid \cJ h(x^*)d = 0,\ \cJ g(x^*)d \in \cC(g(x^*)-\mu^*; \cS_+^p)\},
$$
Since an explicit formula for the affine hull of $\cC(x^*)$ is not readily available, an outer approximation to this affine hull, with respect to $(\lambda^*,\mu^*)$ was introduced \footnote{One may refer to Sun \cite[Section 3]{sunmor} for more information.} in Sun \cite{sunmor}, i.e.,
$$
\begin{array}{ll}
{\app}(\lambda^*,\mu^*):&
=\{d\in\cX: \cJ h(x^*)d=0, \cJ g(x^*)d \in{\aff} \cC((\lambda^*,\mu^*); \cS_+^p )\}
\\[2mm]
&=
\{d\in\cX : \cJ h(x^*)d = 0, P_\beta^T (\cJ g(x^*) d)P_\gamma=0,
P_\gamma^T(\cJ g(x^*)d)P_\gamma = 0\}.
\end{array}
$$
Moreover, for any $(\lambda^*,\mu^*)\in\cM (x^*)$, one has that
${\aff}(\cC(x^*)) \subseteq {\app}(\lambda^*,\mu^*)$, which is different from \eqref{affrel} for the NLSOCP problems.

For any given matrix $B\in\cS^p$, we define the linear-quadratic function $\Upsilon_B:\cS^p\times\cS^p\to\Re$, which is linear in the first argument and quadratic in the second argument, by
$$
\Upsilon_B(\Gamma,A):=2\langle\Gamma,AB^\dag A\rangle,\quad (\Gamma,A)\in\cS^p\times\cS^p,
$$
where $B^\dag$ is the Moore-Penrose pseudo-inverse of $B$.
The following definition of the strong second-order sufficient condition for NLSDP problems was given in Sun \cite[Definition 3.2]{sunmor}, which is an extension of
the strong second-order sufficient condition for NLP problems in Robinson \cite{rob80} to NLSDP problems.

\begin{definition}[Strong second-order sufficient condition for NLSDP]
Let $x^*$ be a stationary point of the NLSDP problem.
We say that the strong second-order sufficient condition holds at $x^*$ if
$$
\sup_{(\lambda,\mu)\in\cM(x^*)}\left\{
\langle d,\cJ^2_{xx}\cL_0(x^*,\lambda,\mu)d\rangle
-\Upsilon_{g(x^*)}(\mu, \cJ g(x^*)d)
\right\}>0,
\
\forall d\in \widehat \cC(x^*)\backslash \{0\},
$$
where
$$
\widehat \cC(x^*):=\bigcap_{(\lambda,\mu)\in\cM(x^*)}{\app}(\lambda,\mu).
$$
\end{definition}

\subsection{Main results}
Based on the above preparations, we are ready to establish the main results of this section.
\begin{theorem}
Suppose that both the constraint nondegeneracy and the strong second-order sufficient condition for problem \eqref{sdp} hold at $x^*$ $($with $(\lambda^*,\mu^*)\in\cM(x^*)$$)$.
Then, Assumptions \ref{assg} and \ref{ass:pd} hold.
\end{theorem}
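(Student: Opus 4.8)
The plan is to imitate the NLSOCP argument almost line by line, the one genuinely new ingredient being a block-structure computation based on Lemma~\ref{lem18}. Assumption~\ref{assg} itself is not new here: part~(i) holds because constraint nondegeneracy at $x^*$ forces $\cM(x^*)$ to be a singleton, and part~(ii) is exactly the uniform-in-$c$ (for $c\ge\bar c$) positive definiteness of $\cA_{(c,\lambda^*,\mu^*)}(\bW)$ over $\bW\in\partial_B\Pi_{\cS_+^p}(\mu^*-cg(x^*))$ established, under constraint nondegeneracy together with the strong second-order sufficient condition, in Sun~\cite{sunmor} and Sun et al.~\cite{sun07}; moreover $\Pi_{\cS_+^p}$ is strongly semismooth by Sun and Sun~\cite{sunsun}, so the blanket hypothesis of Section~\ref{sec3} is in force. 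Hence the real task is to derive Assumption~\ref{ass:pd} from Assumption~\ref{assg} and constraint nondegeneracy.

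Fix $c\ge\bar c$ and set $Z:=\mu^*-cg(x^*)$. By \eqref{alimit} and Assumption~\ref{assg}(ii), $\cA_{(c,\lambda^*,\mu^*)}(\bW)\succ 0$ for every $\bW\in\partial_B\Pi_{\cS_+^p}(Z)$, so ${\mathds V}_c(\lambda^*,\mu^*)$ is nonempty; and since $\cI-\bW\succeq 0$ (a standard property of the Bouligand subdifferential of a projection onto a convex set, also immediate from Lemma~\ref{lem18}), every $V\in{\mathds V}_c(\lambda^*,\mu^*)$ satisfies $V\preceq 0$. I would then argue by contradiction. Suppose some $V\in{\mathds V}_c(\lambda^*,\mu^*)$, carried by a certain $\bW\in\partial_B\Pi_{\cS_+^p}(Z)$, is singular, so there is $\Delta y=(\Delta\lambda,\Delta\mu)\neq 0$ with $\langle\Delta y,V\Delta y\rangle=0$. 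Expanding $-\langle\Delta y,V\Delta y\rangle$ exactly as in \eqref{vdetv} and using $\cA_{(c,\lambda^*,\mu^*)}(\bW)\succ 0$ and $\cI-\bW\succeq 0$ gives $\nabla h(x^*)\Delta\lambda+\nabla g(x^*)\bW(\Delta\mu)=0$ and $(\cI-\bW)(\Delta\mu)=0$; since a positive semidefinite operator annihilates any vector on which its quadratic form vanishes, the latter means $\bW(\Delta\mu)=\Delta\mu$. Constraint nondegeneracy \eqref{constraintq} then yields $d\in\cX$ and $\xi\in\lin\big(\cT_{\cS_+^p}(g(x^*))\big)$ with $\cJ h(x^*)d=\Delta\lambda$ and $\cJ g(x^*)d+\xi=\Delta\mu$, and the chain of identities in \eqref{deltaplus}, which uses only these two relations, gives $\|\Delta\lambda\|^2+\|\Delta\mu\|^2=\langle\bW(\Delta\mu),\xi\rangle=\langle\Delta\mu,\xi\rangle$. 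Thus everything reduces to proving $\langle\Delta\mu,\xi\rangle=0$.

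The crux is this last equality, and it is where I would bring in the spectral structure. Because $\mu^*$ and $g(x^*)$ are positive semidefinite with $\langle\mu^*,g(x^*)\rangle=0$, they commute, so $Z$ has a spectral decomposition $Z=PDP^T$ in a common eigenbasis; let $\alpha,\beta,\gamma$ be its positive, zero and negative index sets as in \eqref{abc}. Inspecting the eigenvalues of $Z$ (each being a $\mu^*$-eigenvalue minus $c$ times the corresponding $g(x^*)$-eigenvalue) under complementarity shows that along $P_\alpha$ one has $\mu^*\succ 0$ and $g(x^*)=0$, along $P_\gamma$ one has $\mu^*=0$ and $g(x^*)\succ 0$, and along $P_\beta$ both vanish. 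Hence $\ker g(x^*)$ is spanned by the columns of $[P_\alpha,P_\beta]$, so by \eqref{lintangz} applied to the point $g(x^*)$ (which equals its own projection) the $(\alpha\cup\beta)\times(\alpha\cup\beta)$ block of $P^T\xi P$ is zero; equivalently, the only possibly nonzero blocks of $\widetilde\xi:=P^T\xi P$ are those meeting the index set $\gamma$. On the other hand, feeding $\Delta\mu$ into the formula \eqref{patvh} of Lemma~\ref{lem18} and imposing $\bW(\Delta\mu)=\Delta\mu$, the zero blocks there, together with the Hadamard factor $U_{\alpha\gamma}$ whose entries lie strictly between $0$ and $1$ (the eigenvalues of $Z$ being strictly positive on $\alpha$ and strictly negative on $\gamma$), force $[P^T\Delta\mu P]_{\gamma\gamma}=0$, $[P^T\Delta\mu P]_{\beta\gamma}=0$ and $[P^T\Delta\mu P]_{\alpha\gamma}=U_{\alpha\gamma}\circ[P^T\Delta\mu P]_{\alpha\gamma}=0$; that is, every block of $m:=P^T\Delta\mu P$ meeting $\gamma$ vanishes. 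The supports of $m$ and of $\widetilde\xi$ are therefore disjoint, whence $\langle\Delta\mu,\xi\rangle=\langle m,\widetilde\xi\rangle=0$. This forces $\|\Delta\lambda\|^2+\|\Delta\mu\|^2=0$, contradicting $\Delta y\neq 0$, so every $V\in{\mathds V}_c(\lambda^*,\mu^*)$ is negative definite and Assumption~\ref{ass:pd} holds.

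The main obstacle, I expect, will be the bookkeeping in the third paragraph: one must carefully align the eigenvalue partition $\alpha,\beta,\gamma$ of $Z$ appearing in Lemma~\ref{lem18} with the complementarity partition coming from $g(x^*)$ and $\mu^*$ — these are interchanged on the positive and negative parts because of the factor $-c$ in front of $g(x^*)$ — verify the strict two-sided bound $0<(U_{\alpha\gamma})_{ij}<1$, and check that $\bW(\Delta\mu)=\Delta\mu$ indeed kills every $\gamma$-block of $\Delta\mu$. It is worth noting that the $\beta\beta$ block of $\Delta\mu$, which \eqref{patvh} only constrains through the auxiliary map $M(\cdot)$ and the matrix $\Gamma$, need not be determined at all, since $\widetilde\xi_{\beta\beta}=0$ already; once these points are settled, the disjoint-support observation closes the proof without any of the case analysis required in the NLSOCP argument.
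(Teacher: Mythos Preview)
Your proof is correct and follows essentially the same route as the paper's: both arguments reduce, via constraint nondegeneracy and the positive definiteness of $\cA_{(c,\lambda^*,\mu^*)}(\bW)$, to showing $\langle\bW\Delta\mu,\xi\rangle=0$, and both close this by combining the lineality-space description \eqref{alg:linexp} with the block structure of $\bW$ from Lemma~\ref{lem18} and the strict bound $0<(U_{\alpha\gamma})_{ij}<1$. Your version streamlines the bookkeeping in two places---by working directly with the spectral decomposition of $Z=\mu^*-cg(x^*)$ (the paper first decomposes $g(x^*)-\mu^*$ and then reorders via an auxiliary orthogonal matrix $Q$), and by invoking the self-adjoint positive semidefiniteness of $\cI_{\cS^p}-\bW$ to obtain $(\cI_{\cS^p}-\bW)\Delta\mu=0$ outright rather than extracting the needed block identity $P_\gamma^T\Delta\mu P_\alpha=0$ from the expanded quadratic form \eqref{muimw}---but the underlying argument is the same.
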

\begin{proof}
The proof for Assumption \ref{assg} comes from \cite[Proposition 17]{pertub} and \cite[Proposition 4]{sun07}.
Next, we show that Assumption \ref{ass:pd} holds.

According to Assumption \ref{assg} we know that $\cM(x^*)=\{(\lambda^*,\mu^*)\}$ is a singleton. Recall that $(x^*,\lambda^*,\mu^*)$ is a solution to the KKT system of the NLSDP problem.
Then,
$$g(x^*)\in\cS_+^p\quad\mbox{and}\quad  \mu^*\in\cS_+^p.$$
If $\Xi:=g(x^*)-\mu^*$ has the spectral decomposition that $\Xi=P D P^T$ with $P$ and $D$ having the representations in \eqref{sdpd} and $\alpha,\beta,\gamma$ being defined in \eqref{abc}, one has from \eqref{ortho} that
\[
\label{gmudec}
g(x^*)=P\begin{pmatrix}
D_\alpha & 0&0\\
0 &0 & 0\\
0 & 0 & 0
\end{pmatrix}
P^T
\quad
\mbox{and}
\quad
-\mu^*=P\begin{pmatrix}
0 & 0&0\\
0 &0 & 0\\
0 & 0 & D_\gamma
\end{pmatrix}P^T.
\]
Therefore, by using \eqref{tangz} and \eqref{lintangz} one has
$$
\cT_{\cS_+^p}(g(x^*))=\{H\in\cS^p \mid [P_\beta\ P_\gamma]^T H [P_\beta\ P_\gamma]\succeq 0\},
$$
so that
\[
\label{alg:linexp}
{\lin}\big(\cT_{S_+^p}(g(x^*))\big)= \{H\in\cS^p\mid [P_\beta\ P_\gamma]^T H [P_\beta\ P_\gamma]=0\}.
\]
Define $Z:=\mu^*-cg(x^*)$ and $Q=[P_{\gamma},P_{\beta}, P_{\alpha}]$.
Then, by using \eqref{gmudec} one can get
$$
Z=
P
\begin{pmatrix}
-cD_\alpha & 0 & 0\\
0 & 0 & 0\\
0 & 0 & -D_\gamma
\end{pmatrix}
P^T
=
Q
\begin{pmatrix}
-D_\gamma & 0 & 0\\
0 & 0 & 0\\
0 & 0 & -cD_\alpha
\end{pmatrix}
Q^T.
$$
Let $\Delta\lambda\in\Re^{m}$ and $\Delta\mu\in\cS^p$ such that $(\Delta\lambda,\Delta\mu)\neq 0$.
From \eqref{patvh} and \eqref{gmudec} we know that for any
$\bW\in\partial_B\Pi_{\cS_+^p}(Z)$,

\begin{equation}
\begin{adjustbox}{width=.9999\textwidth}
$
\label{muimw}
\begin{array}{l}
\ds
\langle \Delta\mu,
(\cI_{\cS^p}- \bW)\Delta\mu
\rangle
\\
=\left\langle \Delta\mu,
Q\, Q^T(\Delta\mu)Q\, Q^T
-Q
\begin{pmatrix}
P_{\gamma}^T{\Delta\mu}P_{\gamma}& P_\gamma^T{\Delta\mu}P_{\beta} & U_{\gamma\alpha}\circ P_\gamma^T{\Delta\mu}P_{\alpha}
\\[2mm]
P_{\beta}^T{\Delta\mu}P_{\gamma}& \overline M(P_{\beta}^T{\Delta\mu}P_{\beta}) & 0\\[2mm]
P_{\alpha}^T{\Delta\mu}P_{\gamma}\circ U_{\gamma\alpha}^T & 0& 0
\end{pmatrix} Q^T
\right\rangle
\\[12mm]
=\left\langle Q^T\Delta\mu Q,
Q^T(\Delta\mu)Q\,
-
\begin{pmatrix}
P_{\gamma}^T{\Delta\mu}P_{\gamma}& P_\gamma^T{\Delta\mu}P_{\beta} & U_{\gamma\alpha}\circ P_\gamma^T{\Delta\mu}P_{\alpha}
\\[2mm]
P_{\beta}^T{\Delta\mu}P_{\gamma}& \overline M(P_{\beta}^T{\Delta\mu}P_{\beta}) & 0\\[2mm]
P_{\alpha}^T{\Delta\mu}P_{\gamma}\circ U_{\gamma\alpha}^T & 0& 0
\end{pmatrix}
\right\rangle
\\[12mm]
=\left\langle Q^T\Delta\mu Q,
\begin{pmatrix}
0& 0 & P_\gamma^T{\Delta\mu}P_{\alpha}-U_{\gamma\alpha}\circ P_\gamma^T{\Delta\mu}P_{\alpha}
\\[2mm]
0& (I_{\beta}-\overline M)(P_{\beta}^T{\Delta\mu}P_{\beta}) & P_{\beta}^T{\Delta\mu}P_{\alpha}\\[2mm]
P_{\alpha}^T{\Delta\mu}P_{\gamma}-
P_{\alpha}^T{\Delta\mu}P_{\gamma}\circ U_{\gamma\alpha}^T & P_{\alpha}^T{\Delta\mu}P_{\beta}&
P_{\alpha}^T{\Delta\mu}P_{\alpha}
\end{pmatrix}
\right\rangle,
\end{array}
$
\end{adjustbox}
\end{equation}
where $I_{\beta}\in\Re^{|\beta|\times|\beta|}$ is the identity matrix and $\overline M$ has the same structure and properties as the matrix $M$ in Lemma \ref{lem18}.
Then, by \eqref{defU} and the fact that $\Delta\mu$ is arbitrarily given, we can conclude that $(\cI_{\cS^p}- \bW)$ is positive semidefinite, where $\cI_{\cS^p}$ is the identity operator on $\cS^p$.
Since Assumption \ref{assg} holds, any $\bV\in{\mathds V}_c(\lambda^*,\mu^*)$ can be represented by the right-hand side of \eqref{v} with $\cA_c(\lambda^*,\mu^*,\bW)$ being nonsingular.
Therefore,
$$
\begin{array}{ll}
-\langle (\Delta\lambda,\Delta\mu), \bV(\Delta\lambda,\Delta\mu)\rangle
\\[2mm]
= \Big\langle
\nabla h(x^*)(\Delta\lambda)+\nabla g(x^*) \bW(\Delta\mu)
,
\\[2mm]
\quad(\cA_c(\lambda^*,\mu^*,\bW))^{-1}
\big(\nabla h(x^*)(\Delta\lambda)+\nabla g(x^*) \bW(\Delta\mu)\big)\Big\rangle
+
\frac{1}{c}
\left
\langle \Delta\mu,
(\cI_{\cS^p}- \bW)\Delta\mu
\right
\rangle.
\end{array}
$$
Therefore, if $\langle (\Delta\lambda,\Delta\mu), \bV(\Delta\lambda,\Delta\mu)\rangle=0$, one has that
\[
\label{eq002}
\langle \Delta\mu,
(\cI_{\cS^p}- \bW)\Delta\mu
\rangle
=0,
\]
and
\[
\label{eq001}
\nabla h(x^*)(\Delta\lambda)+\nabla g(x^*) \bW(\Delta\mu)=0.
\]
Since the constraint nondegeneracy \eqref{constraintq} holds for the NLSDP problem at $x^*$, there exists a vector $d\in \cX$ and a matrix $S\in{\lin}\big(
\cT_{\cS_+^p}(g(x^*))
\big)
$ such that
$$
\cJ h(x^*)d=\Delta\lambda
\quad
\mbox{and}\quad
\cJ g(x^*)d+S=\Delta\mu.
$$
Therefore, one can get from \eqref{eq002} and \eqref{eq001} that
$$
\begin{array}{l}
\|\Delta\lambda\|^2+\|\Delta\mu\|^2=
\langle \Delta\lambda,\Delta\lambda\rangle+\langle\Delta\mu, \Delta\mu\rangle
\\[2mm]
=\langle \Delta\lambda,\Delta\lambda\rangle
+\langle \bW\Delta\mu, \Delta\mu\rangle
+\langle (\cI_{\cS^p}-\bW)\Delta\mu, \Delta\mu\rangle
\\[2mm]
=\langle \Delta\lambda, \cJ h(x^*)d\rangle
+\langle \bW\Delta\mu, \cJ g(x^*)d+S\rangle
+\langle (\cI_{\cS^p}-\bW)\Delta\mu, \Delta\mu\rangle
\\[2mm]
=\langle \nabla h(x^*)(\Delta\lambda)
+\nabla g(x^*)\bW(\Delta\mu), d\rangle
+\langle (\cI_{\cS^p}-\bW)\Delta\mu, \Delta\mu\rangle
+\langle \bW\Delta\mu, S\rangle
\\[2mm]
=\langle \bW\Delta\mu, S\rangle.
\end{array}
$$
Since $S\in{\lin}\big(
\cT_{\cS_+^p}(g(x^*))
\big)
$, we know from \eqref{alg:linexp} that
$[P_\beta\ P_\gamma]^T S [P_\beta\ P_\gamma]=0$.
Therefore, it holds that

$$
\begin{array}{lll}
\langle\bW\Delta\mu, S\rangle
&=&
\left\langle
\begin{pmatrix}
P_{\gamma}^T{\Delta\mu}P_{\gamma}& P_\gamma^T{\Delta\mu}P_{\beta} & U_{\gamma\alpha}\circ P_\gamma^T{\Delta\mu}P_{\alpha}
\\[1mm]
P_{\beta}^T{\Delta\mu}P_{\gamma}& \overline M(P_{\beta}^T{\Delta\mu}P_{\beta}) & 0\\[1mm]
P_{\alpha}^T{\Delta\mu}P_{\gamma}\circ U_{\gamma\alpha}^T & 0& 0
\end{pmatrix}
\right.,
\\[10mm]
&&\qquad\qquad\qquad
\left.
\begin{pmatrix}
0 &0 & P^T_\gamma S P_\alpha
\\[1mm]
0 &0 & P^T_\beta S P_\alpha
\\[1mm]
 P^T_\alpha S P_\gamma& P^T_\alpha S P_\beta& P^T_\alpha S P_\alpha
\end{pmatrix}
\right\rangle
\\[10mm]
&= &\langle U_{\gamma\alpha}\circ P_\gamma^T{\Delta\mu}P_{\alpha}, P^T_\gamma S P_\alpha \rangle
+\langle P_{\alpha}^T{\Delta\mu}P_{\gamma}\circ U_{\gamma\alpha}^T, P^T_\alpha S P_\gamma \rangle.
\end{array}
$$
Suppose that $\langle (\Delta\lambda,\Delta\mu), \bV(\Delta\lambda,\Delta\mu)\rangle=0$.
From \eqref{muimw} and \eqref{eq002} we know that
$$
\langle
P_\gamma^T{\Delta\mu}P_{\alpha},
P_\gamma^T{\Delta\mu}P_{\alpha}-U_{\gamma\alpha}\circ P_\gamma^T{\Delta\mu}P_{\alpha}
\rangle=0.
$$
Then, by using \eqref{defU} and \eqref{gmudec} one can get
$\|P_\gamma^T{\Delta\mu}P_{\alpha}\|=0$. Hence $\langle\bW\Delta\mu, S\rangle=0$.
Consequently, it holds that
$$\|\Delta\lambda\|^2+\|\Delta\mu\|^2=0.$$
This implies that if $\langle (\Delta\lambda,\Delta\mu), \bV(\Delta\lambda,\Delta\mu)\rangle=0$, one must have
$\Delta\lambda=0$ and $\Delta\mu=0$. Therefore, any $\bV\in{\mathds V}_c(\lambda^*,\mu^*)$ is negative definite. This completes the proof.
\end{proof}

\section{Numerical illustration}
\label{secnum}
In this section, we use a simple numerical example to test whether the numerical performance of the second-order augmented Lagrangian method can achieve a superlinear convergence.
For comparison, the classic augmented Lagrangian method with the linear convergence rate is also implemented.

Consider the following problem
$$
\min_{(x_1,x_2)\in\Re^2}
\left\{ \frac{1}{2}(x_1-1)^2+\frac{1}{2}(x_2-2)^2
\ \mid\
x_1=0,\ x_2\ge 0
\right\}.
$$
It is easy to verify the solution of this problem is given by  $(x_1^*,x_2^*)=(0,2)$, and both the LICQ (Definition \ref{deflicq}) and the strong second-order sufficient condition (Definition \ref{defssoscnlp}) are satisfied at this point with the corresponding multipliers given by $(\lambda^*,\mu^*)=(-1,0)$.
We set the penalty parameter $c=1$, and implement both the augmented Lagrangian method and the second-order augmented Lagrangian method for this problem with the initial multipliers \footnote{The initial multiplier is not intentionally chosen. One can observe the same numerical behavior from other initial points.} $(\lambda^0,\mu^0):=(100,100)$.

The numerical experiment is conducted by running {\sc Matlab} R2020a on a MacBook Pro (macOS 11.2.3 system) with one Intel 2.7 GHz Quad-Core i7 Processor and 16 GB RAM.
We measure the error at the $k$-th iteration via
$$\eta^k:=\|(\lambda^k,\mu^k)-(\lambda^*,\mu^*)\|.
$$
The algorithms are terminated if $\eta^k\le 10^{-20}$.
Moreover, we set $\eta^k$ as $10^{-50}$ if $\eta^k\le 10^{-50}$, so that a valid value can be obtained by taking the logarithm of the error when it is too close to zero.

Figure \ref{fig11} clearly shows the behaviors of the two methods. The left part shows how the errors of the two methods varies with respect to the iteration number, while the right part of this figure, by taking logarithm of the errors, explicitly shows the linear convergence of the classic augmented Lagrangian method, as well as the superlinear convergence of the second-order method of multipliers.

\begin{center}
\begin{figure}[h]
\caption{
\label{fig11}
The error (left) and the logarithm of the error (right) with respect to the iteration}
\includegraphics[width=0.48\textwidth]{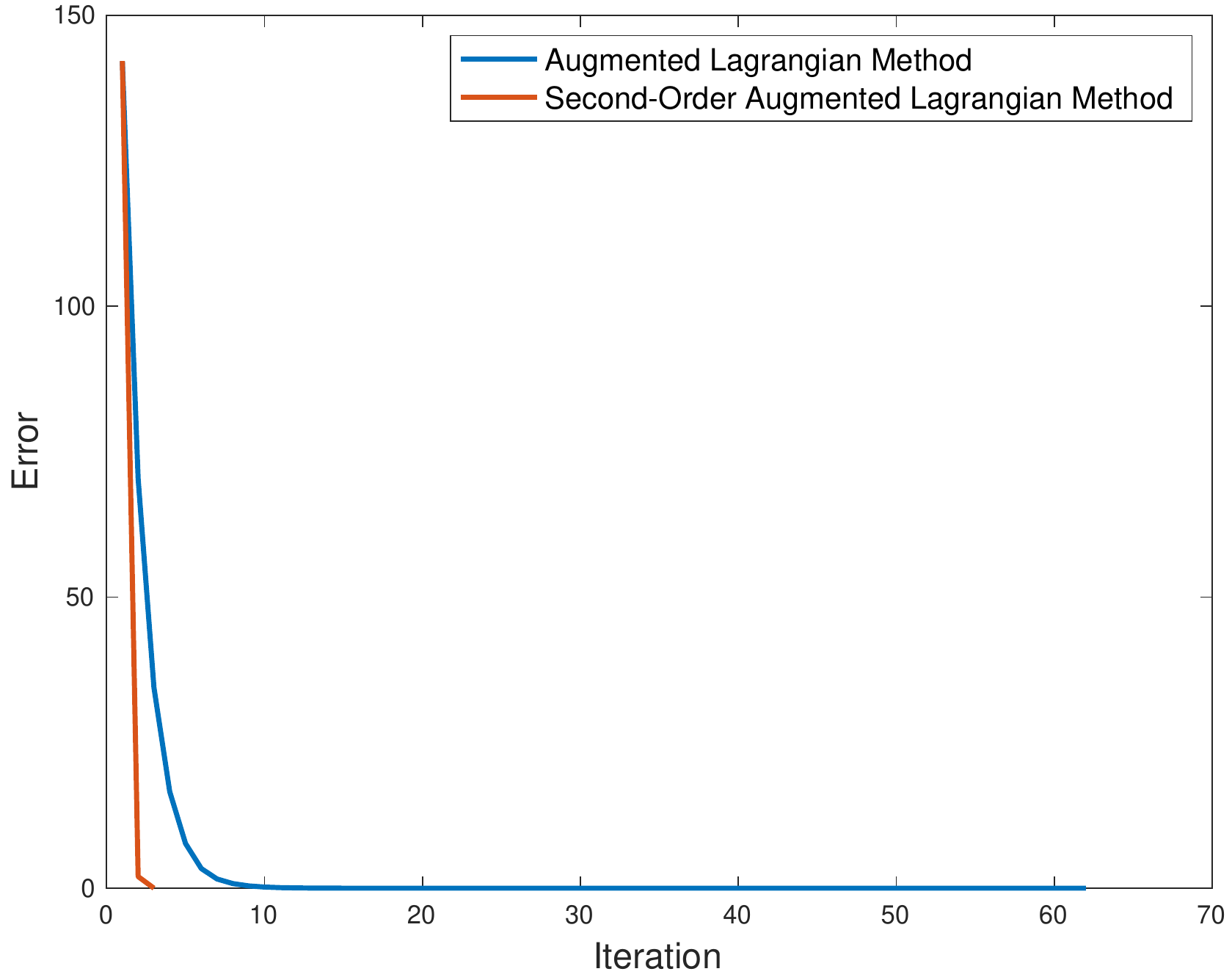}
\includegraphics[width=0.48\textwidth]{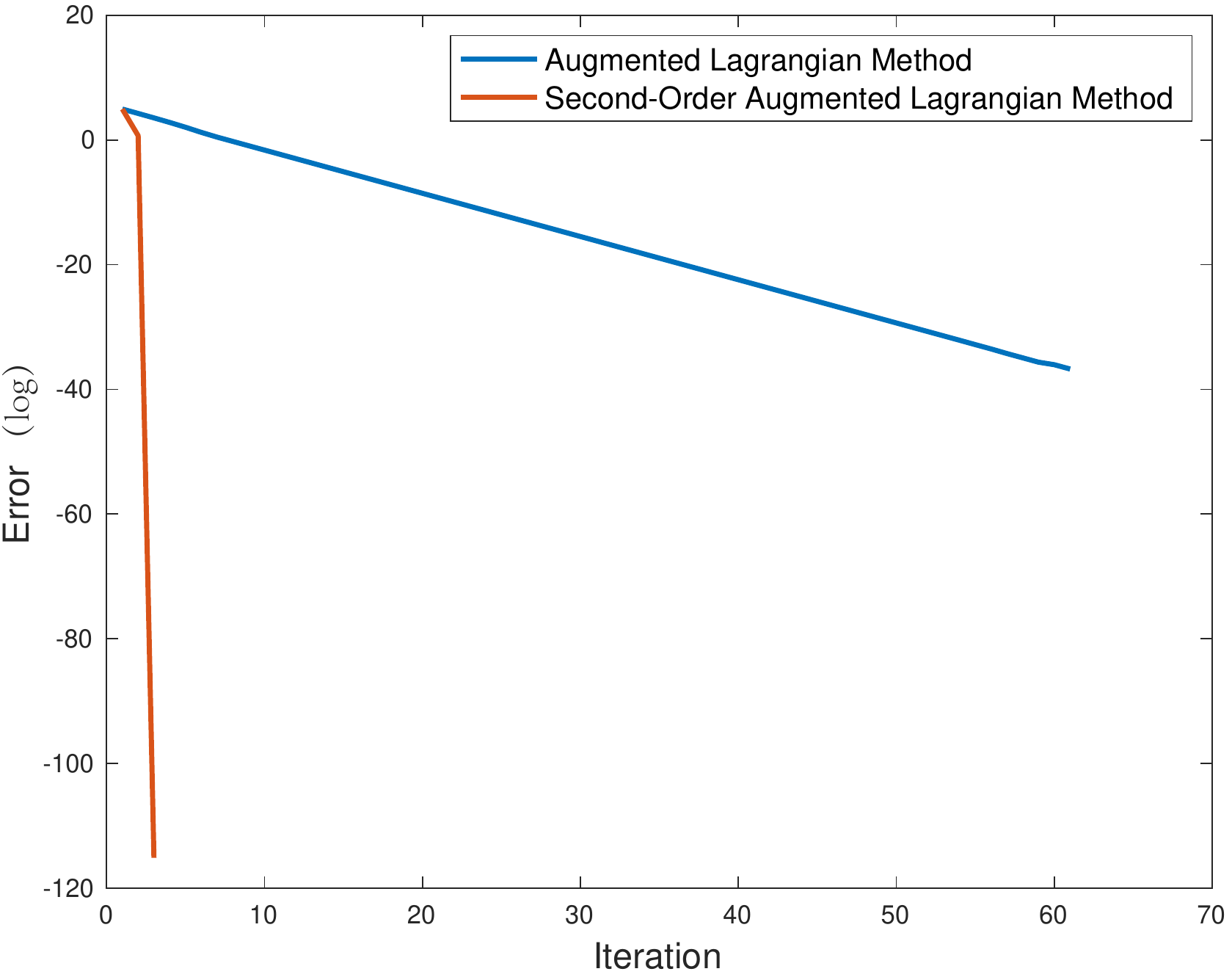}
\end{figure}
\end{center}

\color{black}

\section{Conclusions}
\label{sec7}
In this paper, we have proposed a second-order method of multipliers for solving the NLP, the NLSOCP and the NLSDP problems. The proposed method is a combination of the classic method of multipliers and a specially designed nonsmooth Newton method for updating the multipliers.
The local convergence, as well as the superlinear convergence of this second-order method was established under the constraint nondegeneracy (or LICQ) together with a strong second-order sufficient condition.
We re-emphasize that, the conditions we used in this paper for proving the superlinear rate convergence of the second-order method of multipliers are exactly those used in the literature for establishing the linear rate convergence (or asymptotically superlinear if the penalty parameter goes to infinity) of the classic method of multipliers without assuming the strictly complementarity.
Besides, a simple numerical test was conducted to demonstrate the superlinear convergence behavior of the method studied in this paper.

Since the proposed method is based on both the augmented Lagrangian method and generalized Newton method, we are not able to establish any global convergence property of this method in general. However, to make the method being implementable, globalization techniques should be further studied, together with the computational studies on solving the subproblems and updating the multipliers.


\bigskip
\begin{appendix}
\section{Proof of Proposition \ref{ppvy}}
\label{apppp2}
The following result is a consequence of Qi and Sun \cite[Theorem 2.3]{qi93}.
\begin{lemma}
\label{lemma:semi}
The locally Lipschitz continuous function $F: O\to \cU$ is semismooth at $u\in O$ if and only if
for any sequence $\{u^k\}\subset O$ converging to $u$ with $V^k\in\partial F(u^k)$, it holds that
$F\,'(u;u^k-u)-V^k(u^k-u)=o(\|u^k-u\|)$.
\end{lemma}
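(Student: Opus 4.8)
The plan is to obtain Lemma \ref{lemma:semi} from Qi and Sun \cite[Theorem 2.3]{qi93} together with one elementary observation on locally Lipschitz, directionally differentiable functions. First I would isolate the following auxiliary fact: if $F:O\to\cU$ is locally Lipschitz continuous and directionally differentiable at $u\in O$, then $F(u+h)-F(u)-F'(u;h)=o(\|h\|)$ as $h\to 0$. I would prove this by a standard subsequence argument: if it failed, there would exist $\varepsilon>0$ and $h^k\to 0$, $h^k\neq 0$, with $\|F(u+h^k)-F(u)-F'(u;h^k)\|\ge\varepsilon\|h^k\|$. Writing $h^k=t_k d^k$ with $t_k:=\|h^k\|\downarrow 0$ and $\|d^k\|=1$, compactness of the unit sphere in the finite-dimensional space $\cU$ permits us to assume $d^k\to d$; the local Lipschitz bound then gives $t_k^{-1}(F(u+t_kd^k)-F(u+t_kd))\to 0$, hence $t_k^{-1}(F(u+h^k)-F(u))\to F'(u;d)$, while the Lipschitz continuity of $F'(u;\cdot)$ (a standard consequence of the local Lipschitz property of $F$) together with positive homogeneity yields $t_k^{-1}F'(u;h^k)=F'(u;d^k)\to F'(u;d)$; subtracting contradicts the lower bound $\varepsilon$.

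Granting this fact, both directions of the equivalence become short. For the ``only if'' part, if $F$ is semismooth at $u$ then, by definition, $F$ is directionally differentiable at $u$ and $F(u^k)-F(u)-V^k(u^k-u)=o(\|u^k-u\|)$ for every $\{u^k\}\subset O$ converging to $u$ and every $V^k\in\partial F(u^k)$; applying the auxiliary fact with $h=u^k-u$ and subtracting gives $F'(u;u^k-u)-V^k(u^k-u)=o(\|u^k-u\|)$. For the ``if'' part, the appearance of $F'(u;u^k-u)$ for arbitrary sequences already forces $F$ to be directionally differentiable at $u$, so the auxiliary fact again gives $F(u^k)-F(u)-F'(u;u^k-u)=o(\|u^k-u\|)$; adding the assumed estimate $F'(u;u^k-u)-V^k(u^k-u)=o(\|u^k-u\|)$ recovers $F(u^k)-F(u)-V^k(u^k-u)=o(\|u^k-u\|)$, which is exactly the defining property of semismoothness.

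The only remaining point is to check that the sequential formulation used here matches the ``$h\to 0$'' formulation of semismoothness in \cite[Theorem 2.3]{qi93}, and that the auxiliary fact of the first paragraph is itself contained in (or immediate from) that theorem; in the present finite-dimensional setting both are routine. I do not expect a genuine obstacle: the sole technical content is the subsequence argument above, and if one prefers, one may simply invoke \cite[Theorem 2.3]{qi93} directly and read off the claim, since the two conditions in the statement are classical equivalent characterizations of semismoothness.
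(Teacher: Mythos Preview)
Your proposal is correct and aligns with the paper's treatment: the paper gives no argument beyond stating that the lemma is a consequence of Qi and Sun \cite[Theorem 2.3]{qi93}. You have simply unpacked that citation, supplying the standard (Shapiro-type) auxiliary fact that a locally Lipschitz, directionally differentiable map is automatically B-differentiable, from which both implications follow by adding and subtracting $F'(u;u^k-u)$; this is exactly the content behind the cited theorem, so nothing further is needed.
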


Now we start to prove Proposition \ref{ppvy}.

\begin{proof}[Proof of Proposition \ref{ppvy}]
Since Assumption \ref{assg} holds and $c\ge\bar c$, one can get the two constants $\varepsilon$ and $\delta_0$, as well as the locally Lipschitz function $\bfx_c(\cdot)$, from Proposition \ref{summary}.
For convenience, we denote $y:=(\lambda,\mu)\in\Re^m\times\cZ$.
Suppose that $y\in {\mathds B}_{\delta_0}(\lambda^*,\mu^*)$.
From \eqref{alf}, \eqref{laf}, \eqref{xcy} and \eqref{multiplus} one can get
$$
\nabla_x\cL_0(\bfx_c(y),\lambda_c(y),\mu_c(y))
=\nabla_x\cL_c(\bfx_c(y),\lambda,\mu)=0.
$$
Then, by taking the directional derivative of $\nabla_x\cL_0(\bfx_c(y),\lambda_c(y),\mu_c(y))$ along with the given direction $\Delta y:=(\Delta\lambda,\Delta\mu)\in\Re^{m}\times\cZ$ and using \eqref{multiplus} one can see that
\[
\label{n2}
\begin{array}{ll}
0=&
\nabla_{xx}^{2}\cL_0(\bfx_c(y),\lambda_c(y),\mu_c(y))\bfx_c'(y;\Delta y)
\\[2mm]
&
-\nabla h(\bfx_c(y))(\Delta\lambda)
+c\nabla h(\bfx_c(y))\cJ h(\bfx_c(y))\bfx_c'(y;\Delta y)
\\[2mm]
&-\nabla g(\bfx_c(y))\Pi_K'(\mu-cg(\bfx_c(y));\Delta\mu-c\cJ g(\bfx_c(y))\bfx_c'(y;\Delta y)).
\end{array}
\]
On the other hand, since the projection operator $\Pi_K(\cdot)$ is strongly semismooth, we know that there always exists a linear operator $\widehat W\in\partial_B\Pi_K(\mu-cg(\bfx_c(y)))$ such that
\[
\label{p2}
\Pi_K'\left(\mu-cg(\bfx_c(y));\Delta\mu-c\cJ g(\bfx_c(y))\bfx_c'(y;\Delta y)\right)
=\widehat W\left(\Delta\mu-c\cJ g(\bfx_c(y))\bfx_c'(y;\Delta y)\right).
\]
Therefore, by combining (\ref{n2}), (\ref{p2}) and (\ref{a2}) together, one can get
\[
\label{acposi}
\cA_c(\lambda,\mu,\widehat W)\bfx_c'(y;\Delta y)
=\nabla h(\bfx_c(y))(\Delta\lambda)+\nabla g(\bfx_c(y))\widehat W(\Delta\mu).
\]
From \cite[Proposition 1]{meng} we know that the range of $\partial_{B}\Pi_{K}(\cdot)$ is bounded.
Then, by \cite[proposition 5.51(b)]{va} and \cite[Proposition 5.52(b)]{va} we know that the composite mapping $\cA_c(\lambda,\mu,\partial_{B}\Pi_{K}(\mu-cg(y)))$ is outer semicontinuous in ${\mathds B}_{\delta_0}(\lambda^*,\mu^*)$.
Note that the set $\cA_c(\lambda^*,\mu^*,\partial_{B}\Pi_{K}(\mu^*-cg(\lambda^*,\mu^*)))$ is bounded and bounded away from zero.
Hence, by \eqref{alimit} and Assumption \ref{assg}, we know from \cite[Proposition 5.12(a)]{va} that there exists a positive constant $\delta_1\le\delta_0$ such that $\cA_c(\lambda,\mu,W)$ is always positive definite if $W\in\partial_B\Pi_K(\mu-cg(\bfx_c(y)))$ with $y=(\lambda,\mu)\in{\mathds B}_{\delta_1}(\lambda^*,\mu^*)$.
Consequently, in this case, we know from \eqref{acposi} that
\[
\label{xcpy}
\bfx_c'(y;\Delta y)=(\cA_c(\lambda,\mu,\widehat W))^{-1}\left(\nabla h(\bfx_c(y))(\Delta\lambda)+\nabla g(\bfx_c(y))\widehat W(\Delta\mu)\right).
\]
We use $D_{\nabla\vartheta_c}$ to denote the set of all the points in ${\mathds B}_{\delta_1}(\lambda^*,\mu^*)$ where $\nabla\vartheta_c(\cdot)$ is Fr\'echet-differentiable.
Since $\nabla\vartheta_c(\cdot)$ is semismooth in ${\mathds B}_{\delta_1}(\lambda^*,\mu^*)$, it holds that for any $y=(\lambda,\mu)\in D_{\nabla\vartheta_c}$,
\begin{equation}
\label{nabla2}
\nabla^2\vartheta_c(y)(\Delta y)
=
\left(
\begin{matrix}
-\cJ h(\bfx_c(y))\bfx_c'(y;\Delta y)\\[1mm]
-c^{-1}\Delta\mu+c^{-1}\Pi_K'\big(\mu-cg(\bfx_c(y));\Delta\mu-c\cJ g(\bfx_c(y))\bfx_c'(y;\Delta y)\big)
\end{matrix}
\right).
\end{equation}
Thus, by combining \eqref{p2}, \eqref{xcpy} and \eqref{nabla2} together, one can see from \eqref{v} that
$$
\nabla^2\vartheta_c(y)(\Delta y)
\\[2mm]
\in
\left\{V(\Delta y)\mid V\in{\mathds V}_c(y)
\right\},
\quad\forall y=(\lambda,\mu)\in D_{\nabla\vartheta}.
$$
Since $\bfx_c(\cdot)$ is locally Lipschitz continuous and $\partial_{B}\Pi_{K}(\cdot)$ is bounded-valued and outer semicontinuous, by \cite[Propositions 5.51(b) \& 5.52(b)]{va} we know that ${\mathds V}_c$ is compact-valued and outer semicontinuous.
Hence, for any $(\lambda,\mu)\in{\mathds B}_{\delta_1}(\lambda^*,\mu^*)$, one has that
$\partial_B(\nabla\vartheta_c)(\lambda,\mu)
\subseteq
{\mathds V}_{c}(\lambda,\mu)$. This completes the proof.
\end{proof}

\section{Proof of Theorem \ref{th31}}
\label{ap3}
\begin{proof}
Since Assumption \ref{assg} holds and $c\ge\bar c$, one can get the two constants $\varepsilon$ and $\delta_0$, as well as the locally Lipschitz function $\bfx_c(\cdot)$, via Proposition \ref{summary}.
Meanwhile, from Proposition \ref{ppvy} we know that there exists a constant $\delta_1\le\delta_0$ such that the mapping ${\mathds V}_c$ defined by \eqref{v} is well-defined, compact-valued and outer semicontinuous in ${\mathds B}_{\delta_1}(\lambda^*,\mu^*)$.
Since the sequence $\{(\lambda^k,\mu^k)\}$ converges to $(\lambda^*,\mu^*)$, one always has that $(\lambda^k,\mu^k)\in{\mathds B}_{\delta_1}(\lambda^*,\mu^*)$ for all sufficiently large $k$.
Therefore, the set ${\mathds V}_c(\lambda^{k},\mu^k)$ is nonempty and compact if $k$ is sufficiently large.
Moreover, according to Assumption \ref{ass:pd} and \cite[Proposition 5.12(a)]{va}
we know that there exists a positive constant $\delta_2\le\delta_1$ such that every element in ${\mathds V}_c(\lambda,\mu)$ is negative definite whenever $(\lambda,\mu)\in {\mathds B}_{\delta_2}(\lambda^*,\mu^*)$.

Next, we prove \eqref{gt}.
For all sufficiently large $k$, with the convention that $x^{k+1}:=\bfx_c(\lambda^k,\mu^k)$, one can see from \eqref{v} that there exists a linear operator $W^{k}\in\partial_B\Pi_K(\mu^{k}-cg(x^{k+1}))$ such that
\[
\label{vy}
\begin{array}{ll}
V^k
=&-\begin{pmatrix}
\cJ h(x^{k+1})\\[0mm]
W^k\cJ g(x^{k+1})
\end{pmatrix}
\left(\cA_c(\lambda^k,\mu^k,W^k)\right)^{-1}
\left(
\nabla h(x^{k+1}),
\nabla g(x^{k+1})W^k
\right)
\\[4mm]
&-c^{-1}\left(
\begin{matrix}
0&0\\[0mm]
0& \cI_{\cZ}-W^k
\end{matrix}
\right).
\end{array}
\]
Meanwhile, one can see from \eqref{kkt_2nd} and Lemma \ref{lemma:nal} that
\begin{equation}
\label{naby}
\begin{array}{l}
\nabla\vartheta_c(\lambda^k,\mu^k)-\nabla\vartheta_c(\lambda^*,\mu^*)
\\[2mm]
=\left(\begin{matrix}
-h(x^{k+1})+h(x^*)
\\[2mm]
-c^{-1}(\mu^k-\mu^*)+c^{-1}\Pi_K\big(\mu^{k}-cg(x^{k+1}))-c^{-1}\Pi_K(\mu^*-cg(x^*)\big)
\end{matrix}\right).
\end{array}
\end{equation}
For convenience, we denote $y^k:=(\lambda^k,\mu^k)$ and $y^*:=(\lambda^*,\mu^*)$.
Then, by \eqref{vy} and \eqref{naby} we know that \eqref{gt} holds if and only if for all $W^{k}\in\partial_B\Pi_K(\mu^{k}-cg(x^{k+1}))$ one has that, with $k\to\infty$,
\[
\label{r1}
\begin{array}{l}
h(x^*)-h(x^{k+1})
\\[1mm]
+\cJ h(x^{k+1})
(\cA_c(\lambda^k,\mu^k,W^k))^{-1}
\left(
\nabla h(x^{k+1}),
\nabla g(x^{k+1})W^k
\right)(y^k-y^*)
\qquad
\\[1mm]
\hspace*{\fill}
=o(\|y^k-y^*\|)
\end{array}
\]
and
\begin{multline}
\label{r2}
W^k\cJ g(x^{k+1})
(\cA_c(\lambda^k,\mu^k,W^k))^{-1}
\left(
\nabla h(x^{k+1}),
\nabla g(x^{k+1})W^k
\right)(y^k-y^*)
\\[1mm]
+c^{-1}\Pi_K(\mu^k-cg(x^{k+1}))-c^{-1}\Pi_K(\mu^*-cg(x^*))-c^{-1}W^k
(\mu^k-\mu^*)
\\[1mm]
\hspace*{\fill}
 =o(\|y^k-y^*\|).
\end{multline}
We first veirfy \eqref{r1}.
We know from \eqref{p2} and \eqref{xcpy} that, when $k$ is sufficiently large,
\[
\label{xpie}
\begin{array}{ll}
\bfx_c'(y^k;y^k-y^*)
\\[2mm]
=\big(\cA_c(\lambda^k,\mu^k,\widehat W^{k})\big)^{-1}\big(\nabla h(x^{k+1})(\lambda^{k}-\lambda^*)+\nabla g(x^{k+1})\widehat W^{k}(\mu^{k}-\mu^*)\big),
\end{array}
\]
where the linear operator $\widehat W^{k}\in\partial_B\Pi_K(\mu^{k}-cg(x^{k+1}))$ is chosen such that
\[
\label{eq:hatw}
\begin{array}{ll}
\displaystyle
\Pi_K'\big(\mu^{k}-cg(x^{k+1});\mu^{k}-\mu^*-c\cJ g(x^{k+1}) \bfx_c'(y^k;y^k-y^*)
\big)
\\[2mm]
\displaystyle
=\widehat W^{k}\big(\mu^{k}-\mu^*-c\cJ g(x^{k+1})
\bfx_c'(y^k;y^k-y^*)\big).
\end{array}
\]
As has been discussed in Appendix  \ref{apppp2}, the range of $\partial_{B}\Pi_{K}(\cdot)$ is bounded, so that the sequence $\{\widehat W^{k}\}$ is also bounded.
Moreover, the set $\cA_c(\lambda^*,\mu^*,\partial_{B}\Pi_{K}(\mu^*-cg(\lambda^*,\mu^*)))$ is bounded and bounded away from zero,
while the composite mapping $\cA_c(\lambda,\mu,\partial_{B}\Pi_{K}(\mu-cg(y)))$ is outer semicontinuous in ${\mathds B}_{\delta_0}(\lambda^*,\mu^*)$.
Note that the functions $g$ and $h$ are twice continuously differentiable, the sequences $\{\nabla h(x^{k+1})\}$ and $\{\nabla g(x^{k+1})\}$ are bounded.
Then, we know from \eqref{xpie} that there exists a constant $\rho_{1}>0$ such that, for all sufficiently large $k$,
$\|\bfx_c'(y^k;y^k-y^*)\|\le\rho_{1}\|y^k-y^*\|$.

Since $\Pi_K(\cdot)$ is strongly semismooth and $W^{k}\in\partial_B\Pi_K(\mu^{k}-cg(x^{k+1}))$,
one can see from \eqref{eq:hatw} that, when $k\to\infty$ and $k$ is sufficiently large,
$$
\begin{array}{l}
\Pi_K'\left(\mu^{k}-cg(x^{k+1});\mu^{k}-\mu^*-c\cJ g(x^{k+1})\bfx_c'(y^k;y^k-y^*)
\right)
\\[2mm]
\qquad
-W^k(\mu^{k}-\mu^*-c\cJ g(x^{k+1})
\bfx_c'(y^k;y^k-y^*))
\\[2mm]
=(\widehat W^k-W^k)\left(\mu^{k}-\mu^*-c\cJ g(x^{k+1})
\bfx_c'(y^k;y^k-y^*)\right)
\\[2mm]
=(\widehat W^k-W^k)\left(\mu^k-cg(x^{k+1})-\mu^*+cg(x^*)\right)
\\[2mm]
\quad+c(\widehat W^k-W^k)\left( g(x^{k+1})-g(x^*)-\cJ g(x^{k+1})\bfx_c'(y^k;y^k-y^*)
) \right)
\\[2mm]
=o\left(\|\mu^k-cg(\bfx_c(y^k))-\mu^*+cg(x^*)\|\right)
\\[1mm]
\quad+c(\widehat W^k-W^k)
\big(g(\bfx_c(y^k))-g(\bfx_c(y^*))-\cJ g(x^{k+1})\bfx_c'(y^k;y^k-y^*)\big)
\\[2mm]
=o(\|y^k-y^*\|),
\end{array}
$$
where the penultimate equality is due to Lemma \ref{lemma:semi}, and the
last inequality comes from the fact that the function
$g(\bfx_c(\cdot))$ is semismooth (hence directionally differentiable) around $y^*$.
Then, from the fact that $\{\nabla g(x^{k+1})\}$ is bounded, we can get from the above equation that
\[
\label{oterm1}
\begin{array}{l}
\nabla g(x^{k+1})\Pi_K'\left(\mu^k-cg(x^{k+1});\mu^k-\mu^*-c\cJ g(x^{k+1})\bfx_c'(y^k;y^k-y^*)\right)
\\[2mm]
\qquad
+c\nabla g(x^{k+1})W^k\cJ g(x^{k+1})
\bfx_c'(y^k;y^k-y^*)
-\nabla g(x^{k+1})W^k(\mu^k-\mu^*)
\qquad
\\[2mm]
\hspace*{\fill}
=o(\|y^k-y^*\|).
\end{array}
\]
Note that \eqref{n2} in  Appendix \ref{apppp2} holds for $y\in {\mathds B}_{\delta_2}(\lambda^*,\mu^*)$.
Then, by using \eqref{n2} and \eqref{a2} together we can get that for $y\in {\mathds B}_{\delta_2}(\lambda^*,\mu^*)$ and $W\in\partial_B\Pi_K(\mu -cg(\bfx_c(y)))$,
\[
\label{oterm2}
\begin{array}{ll}
\displaystyle \cA_c(\lambda,\mu,W)\bfx_c'(y;\Delta y)
\\[2mm]
=
\nabla h(\bfx_c(y))(\Delta\lambda)
+c\nabla g(\bfx_c(\lambda,\mu))W\cJ g(\bfx_c(\lambda,\mu))\bfx_c'(y;\Delta y)
\\[2mm]
\qquad
+\nabla g(\bfx_c(y))\Pi_K'(\mu-cg(\bfx_c(y));\Delta\mu-c\cJ g(\bfx_c(y))\bfx_c'(y;\Delta y)).
\end{array}
\]
Then, by taking \eqref{oterm1} into \eqref{oterm2} one can get that for all sufficiently large $k$ with $W^{k}\in\partial_B\Pi_K(\mu^{k}-cg(x^{k+1}))$,
$$
\begin{array}{ll}
\cA_c(\lambda^k,\mu^k,W^k)\bfx_c'(y^k;y^k-y^*)
\\[2mm]
=\nabla h(x^{k+1})(\lambda^k-\lambda^*)
+\nabla g(x^{k+1})W^k(\mu^k-\mu^*)
+o(\|y^k-y^*\|),
\end{array}
$$
which, together with Assumption \ref{assg} the fact
the mapping $\cA_c(\lambda,\mu,\partial_{B}\Pi_{K}(\mu-cg(\bfx_c(y))))$
is outer semicontinuous in ${\mathds B}_{\delta_0}(\lambda^*,\mu^*)$, implies that for all sufficiently large $k$, as $k\to\infty$,
\[
\label{result1}
\begin{array}{ll}
\bfx_c'(y^k;y^k-y^*)
\\[2mm]
=(\cA_c(\lambda^k,\mu^k,W^k))^{-1}
\left(\nabla h(x^{k+1}),
\nabla g(x^{k+1})W^k\right)
(y^k-y^*)
+o(\|y^k-y^*\|).
\end{array}
\]
Since $h(\bfx_c(\cdot))$ is locally semismooth, we have that for sufficiently large $k\to\infty$,
\begin{equation}
\label{hbarx}
h(x^*)=h(x^{k+1})-\cJ h(x^{k+1})\bfx_c'(y^k;y^k-y^*)+o(\|y^k-y^*\|).
\end{equation}
Now, by substituting \eqref{result1} in \eqref{hbarx} and noting that the sequence $\{\cJ h(x^{k+1})\}$ is bounded, we can get \eqref{r1}.
Next, we verify \eqref{r2}.
Note that for sufficiently large $k$ with $k\to\infty$, it holds that
\[
\label{ee1}
\begin{array}{l}
c^{-1}(\Pi_K(\mu^k-cg(x^{k+1}))-\Pi_K(\mu^*-cg(x^*)))\\[1mm]
=c^{-1}\Pi_{K}'\left(\mu^{k}-cg(x^{k+1}),\mu^{k}-\mu^*-cg(x^{k+1})+cg(x^*)\right)
+o(\|y^k-y^*\|),
\end{array}
\]
and
\[
\label{ee2}
-cg(x^{k+1})+cg(x^*)
=-c\cJ g(x^{k+1})\bfx_c'(y^k;y^k-y^*)+o(\|y^k-y^*\|).
\]
Meanwhile, one can choose for each $k$ the linear operator $\widetilde W^{k}\in\partial_{B}\Pi_{K}(\mu^{k}-cg(x^{k+1}))$ such that
\begin{equation}
\label{ee3}
\begin{array}{l}
\widetilde W^{k}\big(\mu^{k}-\mu^*-cg(x^{k+1})+cg(x^*)\big)
\\[2mm]
=\Pi_{K}'\big(\mu^{k}-cg(x^{k+1});\mu^{k}-\mu^*-cg(x^{k+1})+cg(x^*)\big).
\end{array}
\end{equation}
Then by combining \eqref{ee1}, \eqref{ee2} and \eqref{ee3} together, we can get that
\begin{equation}
\label{eq2main}
\begin{array}{l}
c^{-1}\left(\Pi_K(\mu^k-cg(x^{k+1}))-\Pi_K(\mu^*-cg(x^*))\right)
-c^{-1}W^k(\mu^k-\mu^*)
\\[2mm]\quad
+W^k\cJ g(x^{k+1}) \bfx_c'(y^k;y^k-y^*)\\[2mm]
=c^{-1}\widetilde W^{k}
\left(\mu^{k}-\mu^*-cg(x^{k+1})+cg(x^*)\right)
\\[2mm]
\qquad
-c^{-1}W^k
\left(\mu^{k}-\mu^*-cg(x^{k+1})+cg(x^*)\right)
+o(\|y^k-y^*\|)\\[2mm]
=c^{-1}(\widetilde W^{k}-W^{k})\left(\mu^{k}-\mu^*-cg(x^{k+1})+cg(x^*)\right)+o(\|y^k-y^*\|)
\\[2mm]
=o(\|y^k-y^*\|),
\end{array}
\end{equation}
where the last inequality comes from Lemma \ref{lemma:semi} and the fact that $\widetilde W^{k},W^{k}\in\partial_{B}\Pi_{K}(\mu^{k}-cg(x^{k+1}))$.
Note that \eqref{result1} implies that
\begin{equation}
\label{result2}
\begin{array}{l}
W^k\cJ g(x^{k+1})\bfx_c'(y^k;y^k-y^*)
\\[2mm]
=W^k\cJ g(x^{k+1})(\cA_c(\lambda^k,\mu^k,W^k))^{-1}
\left(\nabla h(x^{k+1}),
\nabla g(x^{k+1})W^k\right)
(y^k-y^*)
+o(\|y^k-y^*\|).
\end{array}
\end{equation}
Thus, by substituting \eqref{result2} in \eqref{eq2main} we can get \eqref{r2}, which completes the proof.
\end{proof}

\end{appendix}

\end{document}